\numberwithin{table}{section}
\numberwithin{equation}{section}
\theoremstyle{plain}
\newtheorem{theorem}{Theorem}[section]
\newtheorem{proposition}[theorem]{Proposition}
\newtheorem{definition}[theorem]{Definition}
\newtheorem{lemma}[theorem]{Lemma}
\newtheorem{example}[theorem]{Example}
\newtheorem{corollary}[theorem]{Corollary}
\newtheorem{remark}[theorem]{Remark}
\author[1]{ \textbf{Bryan S. Hernandez}}
\author[2,3,4,5]{\textbf{Eduardo R. Mendoza}}
\affil[1]{\small \textit{Institute of Mathematics, University of the Philippines Diliman, Quezon City 1101, Philippines}}
\affil[2]{\small \textit{Mathematics and Statistics Department, De La Salle University, Manila  0922, Philippines}}
\affil[3]{\small \textit{Center for Natural Sciences and Environmental Research, De la Salle University, Manila 0922, Philippines}}
\affil[4]{\small \textit{Max Planck Institute of Biochemistry, Martinsried, Munich, Germany}}
\affil[5]{\small \textit{LMU Faculty of Physics, Geschwister -Scholl- Platz 1, 80539 Munich, Germany}}
\affil[*]{Email addresses: \texttt{bryan.hernandez@upd.edu.ph and mendoza@lmu.de}}
\title{\textbf{Positive Equilibria of\\Hill-Type Kinetic Systems}}
\date{\normalsize (September 2020)}
\begin{document}
\maketitle
\begin{abstract} 
	%This study explores determining whether a chemical reaction network $\mathscr{N}$ endowed with Hill-type kinetics $K$, called an HTK system $\left(\mathscr{N},K\right)$, has the capacity for multistationarity, i.e., whether there exist positive rate constants such that the corresponding differential equations admit at least two distinct positive equilibria within a stoichiometric class.
	This work introduces a novel approach to study properties of positive equilibria of a chemical reaction network $\mathscr{N}$ endowed with Hill-type kinetics $K$, called a Hill-type kinetic (HTK) system $\left(\mathscr{N},K\right)$, including their multiplicity and concentration robustness in a species.
	We associate a unique
	positive linear combination of power-law kinetic systems called
	poly-PL kinetic (PYK) system $\left( {\mathscr{N},{K_\text{PY}}} \right)$
	to the given HTK system.
	The associated system has the key property that its equilibria sets coincide with those of the Hill-type system, i.e.,
	${E_ + }\left( {\mathscr{N},K} \right) = {E_ + }\left( {\mathscr{N},{K_\text{PY}}} \right)$ and ${Z_ + }\left( {\mathscr{N},K} \right) = {Z_ + }\left( {\mathscr{N},{K_\text{PY}}} \right)$.
	This allows us to identify two novel subsets of the Hill-type kinetics, called PL-equilibrated and PL-complex balanced kinetics, to which recent results on absolute concentration robustness (ACR) of species and complex balancing at positive equilibria of power-law (PL) kinetic systems can be applied. Our main results also include the Shinar-Feinberg ACR Theorem for PL-equilibrated HT-RDK systems (i.e., subset of complex factorizable HTK systems), which establishes a foundation for the analysis of ACR in HTK systems,
	and the extension of the results of M\"uller and Regensburger on generalized mass action systems to PL-complex balanced HT-RDK systems. In addition, we derive the theory of balanced concentration robustness (BCR) in an analogous manner to ACR for PL-equilibrated systems.
	Finally, we provide further extensions of our results to a more general class of kinetics, which includes quotients of poly-PL functions.\\ \\
	{\bf{Keywords:}} {Hill-type kinetics, chemical reaction network theory, multistationarity, complex balanced equilibria, absolute concentration robustness, balanced concentration robustness}
	
\end{abstract}

\thispagestyle{empty}
\section{Introduction}
\label{sec:1}
This paper presents a novel approach to analyze properties of positive equilibria of a chemical reaction network $\mathscr{N}$ endowed with Hill-type kinetics $K$ called Hill-type kinetic (HTK) systems, including their multiplicity and concentration robustness in a species.
A Hill-type kinetics assigns to each $q$-th reaction, with $q = 1,..., r$, a function $K_q: \mathbb{R}_{ \ge 0}^\mathscr{S} \to \mathbb{R}$ of the form
\[{K_q}\left( x \right) = {k_q}\frac{{\prod\limits_{i = 1}^m {{x_i}^{{F_{qi}}}} }}{{\prod\limits_{i = 1}^m {\left( {{d_{qi}} + {x_i}^{{F_{qi}}}} \right)} }}\]
for $i=1,...,m$, where the rate constant $k_q>0$, $\mathscr{S}$ is the set of species, $F := (F_{qi})$ and $D :=  (d_{qi})$ are $r \times m$ real and nonnegative real matrices called the {\textit{kinetic order matrix}} and {\textit{dissociation constant matrix}}, respectively.
Furthermore, ${\rm supp} (D_q) = {\rm supp} (F_q)$ to ensure normalization of zero entries to 1.
In the following, we will denote the power-law numerator with $M_q(x)$ or $M_q$, and the ``translated power-law'' denominator with $T_q(x)$ or $T_q$.

Rate functions of this type were first studied by A. Hill in 1910 for the case of one species (i.e., $m=1$) and nonnegative integer exponents \cite{HILL1910}. Several years later, L. Michaelis and M. Menten focused their investigations in 1913 on functions with exponent 1 \cite{MIME1913}. Refinements and extensions of the Michaelis-Menten model were widely applied to enzyme kinetics \cite{SEGE1975} and also found their way into models of complex biochemical networks in Systems Biology.

The general class of such kinetics was first studied by Sorribas et al. in 2007 \cite{SHVA2007} as ``Saturable and Cooperative Formalism'' (SCF), a new modeling approach based on Taylor series approximation. While mass action and power-law kinetics required combinations of reactions to achieve saturable behavior, SCF achieved this with a single reaction, which is particularly useful in model reduction. SCF differs from Hill-type kinetics only in normalizing of the dissociation constants. A detailed comparison of SCF with the various biochemical modeling approaches can be found in the work of E. Vilaprinyo in 2007 \cite{VILA2007}.

The name ``Hill-type kinetics'' was introduced by C. Wiuf and E. Feliu in their 2013 paper \cite{WIFE2013}, which analyzed injectivity properties of these kinetics, also in relation to those of power-law kinetics and other classes with monotonic characteristics. Gabor et al. in 2015-2016 provided computational solutions to the linear conjugacy problem of systems called ?BioCRNs?, whose closely related kinetics are rational functions with mass action numerators and polynomials with nonnegative coefficients as denominators \cite{GAHA2015,GAHA2016}. A subset of complex factorizable Hill-type kinetics (denoted by HT-RDK) was determined in 2017 by Arceo et al. \cite{AJLM2017}. Finally, Nazareno et al. provided a computational solution to the linear conjugacy problem for any HTK system in 2019 \cite{NEML2019}.

Our approach is based on associating a unique poly-PL kinetic system $\left( {\mathscr{N},{K_\text{PY}}} \right)$ to any given HTK system $\left( {\mathscr{N},{K}} \right)$. The associated system has the key property that its equilibria sets coincide with those of the Hill-type system, i.e.,
${E_ + }\left( {\mathscr{N},K} \right) = {E_ + }\left( {\mathscr{N},{K_\text{PY}}} \right)$ and ${Z_ + }\left( {\mathscr{N},K} \right) = {Z_ + }\left( {\mathscr{N},{K_\text{PY}}} \right)$.
This relationship allows the identification of two novel subsets of Hill-type kinetics, called PL-equilibrated and PL-complex balanced kinetics, to which recent results on absolute concentration robustness (ACR) of species and complex balancing at positive equilibria of power-law (PL) kinetic systems can be applied.  Our main results include:
\begin{itemize}
	\item[$\bullet$] the Shinar-Feinberg ACR Theorem for PL-equilibrated HT-RDK systems as foundation for the analysis of ACR in Hill-type kinetic systems, and
	\item[$\bullet$] the extension of the results of M\"uller and Regensburger on generalized mass action system (GMAS) to PL-complex balanced HT-RDK systems.
\end{itemize}
Furthermore, we discuss the extension of results to poly-PL quotients (i.e., both the numerator and denominator are nonnegative linear combination of power-law functions) and more general kinetic systems.

The paper is organized as follows: Section \ref{prelim} collects the fundamental results on chemical reaction networks and kinetic systems needed in the later sections. After a brief review of poly-PL system basics, Section \ref{positive:equilibria:hill} introduces
the associated PYK system and its basic properties. A short comparison with a case-specific procedure for computing positive equilibria is made. In Section \ref{abs:con:rob:htk}, after concepts of absolute concentration robustness (ACR) in a species $X$ and recent results on PLK systems are reviewed, the Shinar-Feinberg ACR Theorem for the subset of PL-equilibrated HT-RDK systems is derived. It is applied to identify deficiency zero HT-RDK and minimally HT-NDK systems which also exhibit ACR in a species $X$. Using Feinberg's Theorem for independent network decompositions, larger and higher deficiency HTK systems with ACR in $X$ are identified. Section \ref{com:bal:eq:HTK} focuses on complex balanced equilibria of weakly reversible HT-RDK systems and first uses the associated PYK system to extend concepts and results of M\"uller and Regensburger on GMAS partially to them. For the subset of PL-complex balanced HT-RDK systems, the extended results are complete. For the same subset, the theory of balanced concentration robustness (BCR) is derived in an analogous manner to ACR for PL-equilibrated systems.
Section \ref{extension:beyond} discusses the extension to poly-PL quotients and more general kinetic sets.
Finally, summary, conclusions and outlook are provided in Section \ref{sec:sum}.

\section{Fundamentals of chemical reaction networks and kinetic systems}
\label{prelim}
\indent In this section, we present some fundamentals of chemical reaction networks and chemical kinetic systems. These are provided in \cite{arceo2015,feinberg,feinberg1}.

\subsection{Fundamentals of chemical reaction networks}

\begin{definition}
	A {\bf chemical reaction network} (CRN) $\mathscr{N} = \left(\mathscr{S},\mathscr{C},\mathscr{R}\right)$ of nonempty finite sets $\mathscr{S}$, $\mathscr{C} \subseteq \mathbb{R}_{\ge 0}^\mathscr{S}$, and $\mathscr{R} \subset \mathscr{C} \times \mathscr{C}$, of $m$ species, $n$ complexes, and $r$ reactions, respectively, such that
	\begin{itemize}
		\item[i.] $\left( {{C_i},{C_i}} \right) \notin \mathscr{R}$ for each $C_i \in \mathscr{C}$, and
		\item[ii.]  for each $C_i \in \mathscr{C}$, there exists $C_j \in \mathscr{C}$ such that $\left( {{C_i},{C_j}} \right) \in \mathscr{R}$ or $\left( {{C_j},{C_i}} \right) \in \mathscr{R}$.
	\end{itemize}
\end{definition}
We can view $\mathscr{C}$ as a subset of $\mathbb{R}^m_{\ge 0}$. The ordered pair $\left( {{C_i},{C_j}} \right)$ corresponds to the familiar notation ${C_i} \to {C_j}$.

\begin{definition}
	The {\bf molecularity matrix} $Y$ is an $m\times n$ matrix such that $Y_{ij}$ is the stoichiometric coefficient of species $X_i$ in complex $C_j$.
	The {\bf incidence matrix} $I_a$ is an $n\times r$ matrix such that 
	$${\left( {{I_a}} \right)_{ij}} = \left\{ \begin{array}{rl}
		- 1&{\rm{ if \ }}{C_i}{\rm{ \ is \ in \ the\ reactant \ complex \ of \ reaction \ }}{R_j},\\
		1&{\rm{  if \ }}{C_i}{\rm{ \ is \ in \ the\ product \ complex \ of \ reaction \ }}{R_j},\\
		0&{\rm{    otherwise}}.
	\end{array} \right.$$
	The {\bf stoichiometric matrix} $N$ is the $m\times r$ matrix given by 
	$N=YI_a$.
\end{definition}

\begin{definition}
	The {\bf reaction vectors} for a given reaction network $\left(\mathscr{S},\mathscr{C},\mathscr{R}\right)$ are the elements of the set $\left\{{C_j} - {C_i} \in \mathbb{R}^m|\left( {{C_i},{C_j}} \right) \in \mathscr{R}\right\}.$
\end{definition}

\begin{definition}
	The {\bf stoichiometric subspace} of a reaction network $\left(\mathscr{S},\mathscr{C},\mathscr{R}\right)$, denoted by $S$, is the linear subspace of $\mathbb{R}^m$ given by $$S = {\rm{span}}\left\{ {{C_j} - {C_i} \in \mathbb{R}^m|\left( {{C_i},{C_j}} \right) \in \mathscr{R}} \right\}.$$ The {\bf rank} of the network, denoted by $s$, is given by $s=\dim S$. The set $\left( {x + S} \right) \cap \mathbb{R}_{ \ge 0}^m$ is said to be a {\bf stoichiometric compatibility class} of $x \in \mathbb{R}_{ \ge 0}^m$.
\end{definition}

\begin{definition}
	Two vectors $x, x^{*} \in {\mathbb{R}^m}$ are {\bf stoichiometrically compatible} if $x-x^{*}$ is an element of the stoichiometric subspace $S$.
\end{definition}

%\begin{definition}
%A vector $x \in {\mathbb{R}^m}$ is {\bf stoichiometrically compatible} with the stoichiometric subspace $S$ if 
%$sign\left( {{x_s}} \right) = sign\left( {{\sigma _s}} \right)\forall s \in \mathscr{S}$
%for some $\sigma \in S$.
%\end{definition}

\begin{definition}
	Let $\mathscr{N} = \left(\mathscr{S},\mathscr{C},\mathscr{R}\right)$ be a CRN. The {\bf{map of complexes}} $Y:{\mathbb{R}^n} \to \mathbb{R}_{ \ge 0}^m$
	maps the basis vector ${\omega _y}$ to the complex $y \in \mathscr{C}$. The {\bf{incidence map}} ${I_a}:{\mathbb{R}^r} \to {\mathbb{R}^n}$
	is defined by mapping for each reaction $R_i : y \to y' \in \mathscr{R}$, the basis vector ${\omega _{R_i}}$ (or $\omega_i$) to the vector ${\omega _{y'}} - {\omega _y} \in \mathscr{C}$. The {\bf{stoichiometric map}} $N : \mathbb{R}^r \to \mathbb{R}^m$ is defined as $N=Y \circ I_a$.
\end{definition}

The reactant map $\rho$ induces a further useful mapping called {\bf{reactions map}}. The
reactions map $\rho'$ associates the coordinate of reactant complexes to the coordinates of all
reactions of which the complex is a reactant, i.e.,
$\rho' : \mathbb{R}^n \to \mathbb{R}^r$ such that $f : \mathscr{C} \to \mathbb{R}$ mapped to $f \circ \rho$.

We also associate three important linear maps to a positive element $k \in \mathbb{R}^r$. The {\bf{$k$-diagonal map}} ${\rm{diag}}(k)$ maps $\omega_i$ to $k_i\omega_i$ for $R_i \in \mathscr{R}$. The {\bf{$k$-incidence map}} $I_k$ is defined
as the composition ${\rm{diag}}(k) \circ \rho'$. Lastly, the {\bf{Laplacian map}} $A_k : \mathbb{R}^n \to \mathbb{R}^n$ is given by $A_k = I_a \circ I_k$.

CRNs can be seen as graphs. Complexes can be viewed as vertices and reactions as edges. If there is a path between two vertices $C_i$ and $C_j$, then they are said to be {\bf connected}. If there is a directed path from vertex $C_i$ to vertex $C_j$ and vice versa, then they are said to be {\bf strongly connected}. If any two vertices of a subgraph are {\bf (strongly) connected}, then the subgraph is said to be a {\bf (strongly) connected component}. The (strong) connected components are precisely the {\bf (strong) linkage classes} of a chemical reaction network. The maximal strongly connected subgraphs where there are no edges from a complex in the subgraph to a complex outside the subgraph is said to be the {\bf terminal strong linkage classes}.
We denote the number of linkage classes, the number of strong linkage classes, and the number of terminal strong linkage classes by $l, sl,$ and $t$, respectively.
A chemical reaction network is said to be {\bf weakly reversible} if $sl=l$, and it is said to be {\bf $t$-minimal} if $t=l$.

\begin{definition}
	For a chemical reaction network, the {\bf deficiency} is given by $\delta=n-l-s$ where $n$ is the number of complexes, $l$ is the number of linkage classes, and $s$ is the dimension of the stoichiometric subspace $S$.
\end{definition}

\subsection{Fundamentals of chemical kinetic systems}

\begin{definition}
	A {\bf kinetics} $K$ for a reaction network $\mathscr{N}$ is an assignment to each reaction $j: y \to y' \in \mathscr{R}$ of a rate function ${K_j}:{\Omega _K} \to {\mathbb{R}_{ \ge 0}}$ such that $\mathbb{R}_{ > 0}^m \subseteq {\Omega _K} \subseteq \mathbb{R}_{ \ge 0}^m$, $c \wedge d \in {\Omega _K}$ if $c,d \in {\Omega _K}$, and ${K_j}\left( c \right) \ge 0$ for each $c \in {\Omega _K}$.
	Furthermore, it satisfies the positivity property: supp $y$ $\subset$ supp $c$ if and only if $K_j(c)>0$.
	The system $\left(\mathscr{N},K\right)$ is called a {\bf chemical kinetic system}.
\end{definition}

\begin{definition}
	The {\bf species formation rate function} (SFRF) of a chemical kinetic system is given by $f\left( x \right) = NK(x)= \displaystyle \sum\limits_{{C_i} \to {C_j} \in \mathscr{R}} {{K_{{C_i} \to {C_j}}}\left( x \right)\left( {{C_j} - {C_i}} \right)}.$
\end{definition}
The ODE or dynamical system of a chemical kinetics system is $\dfrac{{dx}}{{dt}} = f\left( x \right)$. An {\bf equilibrium} or {\bf steady state} is a zero of $f$.

\begin{definition}
	The {\bf set of positive equilibria} of a chemical kinetic system $\left(\mathscr{N},K\right)$ is given by ${E_ + }\left(\mathscr{N},K\right)= \left\{ {x \in \mathbb{R}^m_{>0}|f\left( x \right) = 0} \right\}.$
\end{definition}

A chemical reaction network is said to admit {\bf multiple equilibria} if there exist positive rate constants such that the ODE system admits more than one stoichiometrically compatible equilibria.

Analogously, the {\bf set of complex balanced equilibria} \cite{HornJackson} is given by 
\[{Z_ + }\left(\mathscr{N},K\right) = \left\{ {x \in \mathbb{R}_{ > 0}^m|{I_a} \cdot K\left( x \right) = 0} \right\} \subseteq {E_ + }\left(\mathscr{N},K\right).\]
A positive vector $c \in \mathbb{R}^m$ is complex balanced if $K\left( c \right)$ is contained in $\ker{I_a}$, and a chemical kinetic system is complex balanced if it has a complex balanced equilibrium.

\begin{definition}
	A kinetics $K$ is a {\bf power-law kinetics} (PLK) if 
	${K_i}\left( x \right) = {k_i}{{x^{{F_{i}}}}} $  $\forall i =1,...,r$ where ${k_i} \in {\mathbb{R}_{ > 0}}$ and ${F_{ij}} \in {\mathbb{R}}$. The power-law kinetics is defined by an $r \times m$ matrix $F$, called the {\bf kinetic order matrix} and a vector $k \in \mathbb{R}^r$, called the {\bf rate vector}.
\end{definition}
If the kinetic order matrix is the transpose of the molecularity matrix, then the system becomes the well-known {\bf mass action kinetics (MAK)}.

\begin{definition}
	A PLK system has {\bf reactant-determined kinetics} (of type PL-RDK) if for any two reactions $i, j$ with identical reactant complexes, the corresponding rows of kinetic orders in $F$ are identical. That is, ${f_{ik}} = {f_{jk}}$ for $k = 1,2,...,m$. A PLK system has {\bf non-reactant-determined kinetics} (of type PL-NDK) if there exist two reactions with the same reactant complexes whose corresponding rows in $F$ are not identical.
\end{definition}

In \cite{arceo2015}, a discussion on several sets of kinetics of a network is given. They identified two main sets: the {\bf{complex factorizable kinetics (CFK)}} and its complement, the {\bf{non-complex factorizable kinetics (NFK)}}.

\begin{definition}
	A chemical kinetics $K$ is {\bf{complex factorizable}} (CF) if there is a $k \in \mathbb{R}_{ > 0}^r$ and a mapping ${\Psi _K}:{\Omega _K} \to \mathbb{R}_{ \ge 0}^n$ such that $\mathbb{R}_{ > 0}^m \subseteq {\Omega _K} \subseteq \mathbb{R}_{ \ge 0}^m$ and $K = {I_K} \circ {\Psi _K}$. We call ${\Psi _K}$ a {\bf{factor map}} of $K$.
\end{definition}

We obtain $K = {\rm{diag}}\left( k \right) \circ \rho ' \circ {\Psi _K}$ by the use of the definition of the $k$-incidence map $I_k$. This
implies that a complex factorizable kinetics $K$ has a decomposition into diagonal matrix of
rate constants and an interaction map $\rho ' \circ {\Psi _K}:R_{ \ge 0}^m \to R_{ \ge 0}^r$. The values of the interaction
map are ``reaction-determined'' in the sense that they are determined by the values on the reactant complexes.

Complex factorizable kinetics generalizes the key structural property of MAK. The complex formation rate function decomposes as $g = {A_k} \circ {\Psi _K}$ while the species formation rate function factorizes as $f = Y \circ {A_k} \circ {\Psi _K}$. The complex factorizable kinetic systems in the set of power-law kinetic systems are precisely the PL-RDK systems \cite{arceo2015}.

\subsection{Review of decomposition theory}
\label{sect:decomposition}

This subsection introduces important definitions and earlier results from the decomposition theory of chemical reaction networks. A more detailed discussion can be found in \cite{FAML2020}.

\begin{definition}
	A {\bf decomposition} of $\mathscr{N}$ is a set of subnetworks $\{\mathscr{N}_1, \mathscr{N}_2,...,\mathscr{N}_k\}$ of $\mathscr{N}$ induced by a partition $\{\mathscr{R}_1, \mathscr{R}_2,...,\mathscr{R}_k\}$ of its reaction set $\mathscr{R}$. 
\end{definition}

We denote a decomposition by 
$\mathscr{N} = \mathscr{N}_1 \cup \mathscr{N}_2 \cup ... \cup \mathscr{N}_k$
since $\mathscr{N}$ is a union of the subnetworks in the sense of \cite{ghms2019}. It also follows immediately that, for the corresponding stoichiometric subspaces, 
${S} = {S}_1 + {S}_2 + ... + {S}_k$.

The following important concept of independent decomposition was introduced by Feinberg in \cite{feinberg12}.

\begin{definition}
	A network decomposition $\mathscr{N} = \mathscr{N}_1 \cup \mathscr{N}_2 \cup ... \cup \mathscr{N}_k$  is {\bf independent} if its stoichiometric subspace is a direct sum of the subnetwork stoichiometric subspaces.
\end{definition}

In \cite{fortun2}, it was shown that for any independent decomposition, the following holds $$\delta \le \delta_1 +\delta_2 ... +\delta_k.$$

\begin{definition} \cite{FAML2020}
	A decomposition $\mathscr{N} = \mathscr{N}_1 \cup \mathscr{N}_2 \cup ... \cup \mathscr{N}_k$ with $\mathscr{N}_i = (\mathscr{S}_i,\mathscr{C}_i,\mathscr{R}_i)$ is a {\bf $\mathscr{C}$-decomposition} if for each pair of distinct $i$ and $j$, $\mathscr{C}_i$ and $\mathscr{C}_j$ are disjoint.
\end{definition}

\begin{definition}
	A decomposition of CRN $\mathscr{N}$ is {\bf incidence-independent} if the incidence map $I_a$ of $\mathscr{N}$ is the direct sum of the incidence maps of the subnetworks. It is {\bf bi-independent} if it is both independent and incidence-independent.
\end{definition}

We can also show incidence-independence by satisfying the equation $n - l = \sum {\left( {{n_i} - {l_i}} \right)}$, where $n_i$ is the number of complexes and $l_i$ is the number of linkage classes, in each subnetwork $i$.

In \cite{FAML2020}, it was shown that for any incidence-independent decomposition, $$\delta \ge \delta_1 +\delta_2 ... +\delta_k.$$ In addition, $\mathscr{C}$-decompositions form a subset of incidence-independent decompositions.

Feinberg established the following relation between an independent decomposition and the set of positive equilibria of a kinetics on the network.

\begin{theorem} (Feinberg Decomposition Theorem \cite{feinberg12})
	\label{feinberg:decom:thm}
	Let $\{\mathscr{R}_1, \mathscr{R}_2,...,\mathscr{R}_k\}$ be a partition of a CRN $\mathscr{N}$ and let $K$
	be a kinetics on $\mathscr{N}$. If $\mathscr{N} = \mathscr{N}_1 \cup \mathscr{N}_2 \cup ... \cup \mathscr{N}_k$ is the network decomposition of the partition and ${E_ + }\left(\mathscr{N}_i,{K}_i\right)= \left\{ {x \in \mathbb{R}^m_{>0}|N_iK_i(x) = 0} \right\}$, then
	\[{E_ + }\left(\mathscr{N}_1,K_1\right) \cap {E_ + }\left(\mathscr{N}_2,K_2\right) \cap ... \cap {E_ + }\left(\mathscr{N}_k,K_k\right) \subseteq  {E_ + }\left(\mathscr{N},K\right).\]
	If the network decomposition is independent, then equality holds.
\end{theorem}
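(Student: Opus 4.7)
The plan is to view the species formation rate function as a sum over the subnetworks induced by the partition, and then exploit the direct-sum property that independence supplies. Because $\{\mathscr{R}_1,\ldots,\mathscr{R}_k\}$ is a partition of $\mathscr{R}$, the columns of the stoichiometric matrix $N$ split accordingly, so that for every $x \in \mathbb{R}_{>0}^m$ one has
\[
f(x) \;=\; NK(x) \;=\; \sum_{i=1}^{k} N_i K_i(x) \;=\; \sum_{i=1}^{k} f_i(x),
\]
where $f_i$ is the SFRF of $(\mathscr{N}_i, K_i)$. This identity is the workhorse of both halves of the proof.

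For the inclusion ``$\subseteq$'', I would simply take $x \in \bigcap_{i=1}^{k} E_+(\mathscr{N}_i, K_i)$, so that $f_i(x) = N_i K_i(x) = 0$ for each $i$. Summing gives $f(x) = 0$, hence $x \in E_+(\mathscr{N},K)$. No hypothesis beyond the partition is needed here, which matches the statement.

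For the reverse inclusion under independence, suppose $x \in E_+(\mathscr{N},K)$, so $\sum_{i=1}^{k} f_i(x) = 0$. The key observation is that each $f_i(x) = N_i K_i(x)$ is a nonnegative linear combination of the reaction vectors in $\mathscr{R}_i$, so $f_i(x) \in S_i$. By the definition of an independent decomposition, $S = S_1 \oplus S_2 \oplus \cdots \oplus S_k$, meaning every vector in $S$ has a \emph{unique} expression as a sum of elements of the $S_i$. Applied to the zero vector, this forces $f_i(x) = 0$ for every $i$, so $x \in E_+(\mathscr{N}_i, K_i)$ for each $i$, which is the desired equality.

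The only step requiring care is the last one: one must make sure that ``independence'' as defined in Subsection~\ref{sect:decomposition} is precisely the direct-sum condition needed to conclude uniqueness of the decomposition of $0$. That is an immediate consequence of the definition, so there is no substantive obstacle; the proof is essentially a one-line linear-algebra argument once the decomposition $f = \sum f_i$ with $f_i \in S_i$ is in place. Everything else (positivity of $x$, shape of $K$, etc.) plays no role because the argument is purely about the additive structure of $N$ and the stoichiometric subspaces.
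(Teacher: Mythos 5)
Your argument is correct. Note that the paper does not actually prove this statement --- it is imported verbatim from Feinberg \cite{feinberg12} --- but your proof is essentially Feinberg's original one: split $f = \sum_i f_i$ with $f_i = N_iK_i \in S_i$ along the reaction partition, get the inclusion by summing, and get the reverse inclusion from the uniqueness of the representation of $0$ in the direct sum $S = S_1 \oplus \cdots \oplus S_k$. The only cosmetic remark is that nonnegativity of the coefficients $K_{i,j}(x)$ is irrelevant; membership of $f_i(x)$ in $S_i$ is all that is used, as you in effect acknowledge.
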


The analogue of Feinberg's 1987 result for incidence-independent decompositions and complex balanced equilibria is shown in \cite{FAML2020}:

\begin{theorem} (Theorem 4 \cite{FAML2020})
	\label{decomposition:thm:2}
	Let $\mathscr{N}=(\mathscr{S},\mathscr{C},\mathscr{R})$ be a a CRN and $\mathscr{N}_i=(\mathscr{S}_i,\mathscr{C}_i,\mathscr{R}_i)$ for $i = 1,2,...,k$ be the subnetworks of a decomposition.
	Let $K$ be any kinetics, and $Z_+(\mathscr{N},K)$ and $Z_+(\mathscr{N}_i,K_i)$ be the set of complex balanced equilibria of $\mathscr{N}$ and $\mathscr{N}_i$, respectively. Then
	\begin{itemize}
		\item[i.] ${Z_ + }\left(\mathscr{N}_1,K_1\right) \cap {Z_ + }\left(\mathscr{N}_2,K_2\right) \cap ... \cap {Z_ + }\left(\mathscr{N}_k,K_k\right) \subseteq  {Z_ + }\left(\mathscr{N},K\right)$.\\
		If the decomposition is incidence independent, then
		\item[ii.] ${Z_ + }\left( {\mathscr{N},K} \right) = {Z_ + }\left(\mathscr{N}_1,K_1\right) \cap {Z_ + }\left(\mathscr{N}_2,K_2\right) \cap ... \cap {Z_ + }\left(\mathscr{N}_k,K_k\right)$, and
		\item[iii.] ${Z_ + }\left( {\mathscr{N},K} \right) \ne \varnothing$ implies ${Z_ + }\left( {\mathscr{N}_i,K_i} \right) \ne \varnothing$ for each $i=1,...,k$.
	\end{itemize}
\end{theorem}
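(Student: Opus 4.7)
The plan is to interpret the complex-balancing condition $I_a \cdot K(x) = 0$ reaction-wise, exploit the partition $\mathscr{R} = \mathscr{R}_1 \sqcup \cdots \sqcup \mathscr{R}_k$, and then invoke the structural meaning of incidence-independence. I would begin by writing, for any $x \in \mathbb{R}^m_{>0}$,
\[
I_a K(x) \;=\; \sum_{j \in \mathscr{R}} K_j(x)\bigl(\omega_{y'} - \omega_y\bigr),
\]
where $j: y \to y'$ and $\omega_y$ denotes the standard basis vector of $\mathbb{R}^n$ indexed by the complex $y$. Splitting the sum along the partition, this rewrites as $I_a K(x) = \sum_{i=1}^k \iota_i\bigl((I_a)_i K_i(x)\bigr)$, where $\iota_i : \mathbb{R}^{n_i} \hookrightarrow \mathbb{R}^n$ is the canonical inclusion induced by $\mathscr{C}_i \subseteq \mathscr{C}$ and $(I_a)_i$ is the incidence map of $\mathscr{N}_i$.

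For part (i), this decomposition is already enough: if $x$ lies in every $Z_+(\mathscr{N}_i, K_i)$, then each summand vanishes, hence $I_a K(x) = 0$ and $x \in Z_+(\mathscr{N}, K)$. No hypothesis on the decomposition is needed. For part (ii), the extra input is incidence-independence, which I would use through the rank identity $n - l = \sum_i (n_i - l_i)$ stated after the definition, equivalently $\mathrm{rank}(I_a) = \sum_i \mathrm{rank}((I_a)_i)$ via the classical $\mathrm{rank}(I_a) = n - l$. Combined with the (always-true) equality $\mathrm{Im}(I_a) = \sum_i \iota_i\bigl(\mathrm{Im}((I_a)_i)\bigr)$ coming from the reaction-wise formula above, this dimension count forces the sum of subspaces $\iota_i\bigl(\mathrm{Im}((I_a)_i)\bigr)$ in $\mathbb{R}^n$ to be \emph{direct}. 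Consequently, for $x \in Z_+(\mathscr{N}, K)$ the relation
\[
0 \;=\; I_a K(x) \;=\; \sum_{i=1}^k \iota_i\bigl((I_a)_i K_i(x)\bigr)
\]
compels each term $\iota_i((I_a)_i K_i(x))$ to vanish, and injectivity of $\iota_i$ yields $(I_a)_i K_i(x) = 0$; so $x \in Z_+(\mathscr{N}_i, K_i)$ for every $i$. Together with (i), this proves the equality in (ii). Part (iii) is then immediate: if some $x \in Z_+(\mathscr{N}, K)$ exists, (ii) places it in every $Z_+(\mathscr{N}_i, K_i)$, so none of them is empty.

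The main technical step is verifying that the combinatorial identity $n - l = \sum_i (n_i - l_i)$ is genuinely equivalent to the direct-sum statement $\mathrm{Im}(I_a) = \bigoplus_i \iota_i\bigl(\mathrm{Im}((I_a)_i)\bigr)$. This reduces to applying $\mathrm{rank}(I_a) = n - l$ to $\mathscr{N}$ and to each subnetwork and then using that a sum of subspaces whose dimensions add up to the dimension of the sum is necessarily direct. Everything else is routine bookkeeping on how the partition of $\mathscr{R}$ and the complex-inclusion maps $\iota_i$ interact, and the same reaction-wise splitting works uniformly for all three parts.
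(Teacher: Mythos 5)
Your argument is correct. Note that the paper itself gives no proof of this statement: it is imported verbatim as Theorem~4 of \cite{FAML2020}, so there is no in-paper proof to compare against; your write-up supplies the standard argument one would expect to find in that reference. The reaction-wise splitting $I_a K(x) = \sum_i \iota_i\bigl((I_a)_i K_i(x)\bigr)$ immediately gives (i), and your use of incidence-independence for (ii) is sound: the paper's definition (the incidence map is the direct sum of the subnetworks' incidence maps, equivalently $n-l=\sum_i(n_i-l_i)$) is exactly the directness of $\mathrm{Im}(I_a)=\sum_i \iota_i\bigl(\mathrm{Im}((I_a)_i)\bigr)$, via $\mathrm{rank}(I_a)=n-l$ applied to $\mathscr{N}$ and to each $\mathscr{N}_i$ together with the fact that a sum of subspaces whose dimensions add up to the dimension of the sum is direct; each component of a vanishing sum in a direct sum must vanish, and injectivity of $\iota_i$ finishes the step. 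Part (iii) is then immediate from (ii). The only convention worth making explicit is that $Z_+(\mathscr{N}_i,K_i)$ is taken inside $\mathbb{R}^m_{>0}$ (the full species space), as the paper does for $E_+(\mathscr{N}_i,K_i)$ in Theorem~\ref{feinberg:decom:thm}, so that the intersections make sense; with that understood, nothing in your proposal needs repair.
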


The following converse of Theorem \ref{decomposition:thm:2} iii holds for a subset of incidence-independent decompositions with any kinetics.

\begin{theorem} (Theorem 5 \cite{FAML2020})
	Let $\mathscr{N} = \mathscr{N}_1 \cup \mathscr{N}_2 \cup ... \cup \mathscr{N}_k$ be a weakly reversible $\mathscr{C}$-decomposition of a chemical kinetic system $(\mathscr{N},K)$. If ${Z_ + }\left( {\mathscr{N}_i,K_i} \right) \ne \varnothing$ for each $i=1,...,k$, then ${Z_ + }\left( {\mathscr{N},K} \right) \ne \varnothing$.
\end{theorem}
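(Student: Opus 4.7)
My plan is to combine Theorem~\ref{decomposition:thm:2} with the structural features of a $\mathscr{C}$-decomposition and weak reversibility. First I would use the fact recorded in the text that every $\mathscr{C}$-decomposition is incidence-independent, so Theorem~\ref{decomposition:thm:2}(ii) applies and gives
\[
Z_+(\mathscr{N}, K) \;=\; \bigcap_{i=1}^{k} Z_+(\mathscr{N}_i, K_i).
\]
The conclusion $Z_+(\mathscr{N},K)\neq\varnothing$ is therefore equivalent to the intersection on the right being nonempty. I would then unpack the defining equations: $I_a K(c)=0$ splits, thanks to the pairwise disjoint $\mathscr{C}_i$, into $k$ block equations $I_{a,i} K_i(c)=0$ with disjoint row-blocks. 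So the task is to produce a single positive $c^{\ast}$ satisfying all $k$ blocks at once, given the separate witnesses $c_i\in Z_+(\mathscr{N}_i,K_i)$.

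To produce $c^{\ast}$, I would exploit weak reversibility of each $\mathscr{N}_i$. Weak reversibility means every linkage class of $\mathscr{N}_i$ is strongly connected, which (via a Matrix--Tree style argument) makes the set of positive cycle fluxes on $\mathscr{R}_i$ a relatively rich cone; an element $c_i\in Z_+(\mathscr{N}_i,K_i)$ is precisely a positive point at which $K_i(c_i)$ lands in that cone. Because the decomposition is a $\mathscr{C}$-decomposition, the linkage classes of $\mathscr{N}$ partition cleanly into the linkage classes of the subnetworks (no complex is shared), so the block equations never interact through shared complexes---only (possibly) through shared species inside the rate functions $K_j$. A natural strategy is an inductive one: take $c^{(1)}:=c_1\in Z_+(\mathscr{N}_1,K_1)$, and at step $i\geq 2$ adjust $c^{(i-1)}$ along directions that do not disturb the already-achieved block equations, using the slack provided by weak reversibility of $\mathscr{N}_i$, to land in $Z_+(\mathscr{N}_i,K_i)$ as well.

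The principal obstacle is making this inductive adjustment rigorous. A species may appear in the rate functions of several subnetworks, so an adjustment aimed at satisfying the $i$-th block can in principle disturb a block that was already balanced; the argument must therefore identify genuine degrees of freedom that live in the intersection of the ``stabilizers'' of the earlier blocks. I expect the key technical lemma to assert that, for a weakly reversible $\mathscr{N}_i$, the complex-balance locus $Z_+(\mathscr{N}_i,K_i)$ is large enough (for PL-type or otherwise sufficiently structured $K_i$, a coset of $S_i^\perp$ in log-coordinates) to always contain such a compatible adjustment. Pinning down this slack precisely and verifying that the disjointness of the complex blocks in a $\mathscr{C}$-decomposition really does decouple the different blocks' constraints sufficiently to allow simultaneous satisfaction is, in my view, where the bulk of the work lies.
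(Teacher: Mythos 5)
Your opening move is correct and is the only part of the argument the paper itself supplies any basis for: since a $\mathscr{C}$-decomposition is incidence-independent, Theorem~\ref{decomposition:thm:2}(ii) reduces the claim to showing $\bigcap_{i=1}^{k} Z_+(\mathscr{N}_i,K_i)\neq\varnothing$. Note, however, that the paper states this result purely as a citation of Theorem 5 of \cite{FAML2020} and gives no proof, so beyond this reduction there is no argument in the text to compare yours against; your proposal has to stand on its own.

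It does not. Everything after the reduction is a plan resting on an unproved ``key technical lemma'' --- that each $Z_+(\mathscr{N}_i,K_i)$ has enough slack for an inductive adjustment that does not disturb the blocks already balanced --- and that lemma is false at the level of generality of the statement, which allows \emph{any} kinetics. Even for power-law kinetics $Z_+(\mathscr{N}_i,K_i)$ can be a single point, so there is no room to perturb. Concretely, take $\mathscr{S}=\{X\}$ and the weakly reversible $\mathscr{C}$-decomposition $\mathscr{N}_1: X\rightleftharpoons 2X$, $\mathscr{N}_2: 3X\rightleftharpoons 4X$ with power-law rates $x$, $\tfrac{1}{2}x^2$, $x^3$, $\tfrac{1}{3}x^4$: then $Z_+(\mathscr{N}_1,K_1)=\{2\}$ and $Z_+(\mathscr{N}_2,K_2)=\{3\}$, both nonempty, yet their intersection is empty. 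The disjointness of the complex sets decouples the balance \emph{equations} into blocks, but the blocks remain coupled through species shared by the rate functions, and nothing you have invoked (weak reversibility, richness of the cycle cone, log-coset structure of $S_i^{\perp}$) rules out incompatible constraints on those shared species. So the gap is not a missing detail: under the definitions as given in this paper your target claim is not provable by your route, and a correct proof must identify and exploit whatever additional hypothesis or convention the source \cite{FAML2020} is actually using (for instance, restrictions on how species are shared between subnetworks or on the kinetics class).
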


\section{A poly-PL approach to Hill-type kinetics}
\label{positive:equilibria:hill}

\subsection{Some basic properties of poly-PL systems}
In this section, we associate a unique poly-PL system to an HTK system. Its key property, that its sets of positive equilibria and of complex balanced equilibria coincide with those of the Hill-type system, is the basis of our analysis. We compare the approach with a simpler procedure for determining the equilibria sets in order to clarify its significance for deriving kinetic system properties of the original system. We begin with a brief review of concepts and results about poly-PL kinetic (denoted by PYK) systems. For further details on these systems, we refer to \cite{FTJM2020,TMMN2020}.

We recall the definition of a poly-PL kinetics from Talabis et al. \cite{TMMN2020} and introduce some additional notation:

\begin{definition}
	A kinetics $K: \mathbb{R}_{ > 0}^m \to \mathbb{R}^r$ is a {\bf poly-PL kinetics} if
	$${K_i}\left( x \right) = {k_i}\left( {{a_{i1}}{x^{{F_{i1}}}} + ... + {a_{ij}}{x^{{F_{ij}}}}} \right)\ \ \ \forall i \in \left\{ {1,...,r} \right\}$$
	written in lexicographic order with $k_i>0, a_{ij} \ge 0$, $F_i \in \mathbb{R}^m$ and $j \in \{1,...,h_i\}$ where $h_i$ is the number of terms in reaction $i$.
	If $h = \max {h_i}$, we normalize the length of each kinetics to $h$ by replacing the last term with $(h - h_i+1)$ terms with $\dfrac{1}{{\left( {h - {h_i}+1} \right)}}{x^{{F_{i{h_i}}}}}$.
	We call this the {\bf canonical PL-representation} of a poly-PL kinetics.
\end{definition}
We set ${K_j}\left( x \right): = {k_{ij}}{x^{{F_{ij}}}}$ with ${k_{ij}} = {k_i}{a_{ij}}$ where $i=1,...,r$ for each $j = 1,...,h$. $(\mathscr{N}, K_j)$ is a PLK system with kinetic order matrix $F_j$. Also, we let the matrix $A:=A_{ij} = \left( a_{ij} \right)$. We denote with PY-NIK the set of poly-PL kinetics with only nonnegative kinetic orders.

It follows from the canonical PL-representation of a poly-PL kinetics that the ${\mathop{\rm span}\nolimits} \left\langle {{\mathop{\rm im}\nolimits} K} \right\rangle $
is contained in the sum
$\left\langle {{\mathop{\rm im}\nolimits} {K_1}} \right\rangle  + ... + \left\langle {{\mathop{\rm im}\nolimits} {K_h}} \right\rangle $.

\begin{definition}
	A poly-PL kinetic system $(\mathscr{N}, K)$ is called {\bf PL-decomposable} if the sum $\left\langle {{\mathop{\rm im}\nolimits} {K_1}} \right\rangle  + ... + \left\langle {{\mathop{\rm im}\nolimits} {K_h}} \right\rangle $  is direct.
\end{definition}

It is shown in \cite{FTJM2020} that many properties of PLK systems can be extended to the set of PL-decomposable poly-PL systems.

The {\bf{$S$-invariant termwise addition of reactions via maximal stoichiometric
		coefficients (STAR-MSC)}} method is based on the idea of using the maximal stoichiometric
coefficient (MSC) among the complexes in the CRN to construct reactions whose reactant
complexes and product complexes are different from existing ones. This is done by uniform
translation of the reactants and products to create a ``replica'' of the CRN. The method
creates $h - 1$ replicas of the original network. Hence, its transform $\mathscr{N}^*$ becomes the
union (in the sense of \cite{ghms2019}) of the replicas and the original CRN.

We now describe the STAR-MSC method. All $x=(x_1,\ldots,x_m)$ are positive vectors
since the domain in the definition of a poly-PL kinetics is $\mathbb{R}^{m}_{>0}$. 
Let $M = 1 + \max \{y_i | y \in \mathscr{C}\}$, where the second summand is the maximal stoichiometric coefficient, which is a positive integer.

For every positive integer $z$, let $z$ be identified with the vector $(z,z,\ldots,z)$ in $\mathbb{R}^{m}$.
For each complex $y \in \mathscr{C}$, form the $(h-1)$ complexes 
\[y + M, y + 2M, \ldots, y + (h -1)M.\]
Each of these complexes are different from all existing complexes and each other as shown
in the following Proposition in \cite{FTJM2020}:

\begin{proposition}
	Let $\mathscr{N}^*=(\mathscr{S}, \mathscr{C}^*, \mathscr{R}^*)$ be a STAR-MSC transform of $\mathscr{N}=(\mathscr{S}, \mathscr{C}, \mathscr{R})$,
	$\mathscr{N}^*_1:=\mathscr{N}$ and $\mathscr{N}^*_j$ is the subnetwork defined by $\mathscr{R}^*_{j-1}$ for $j \in \{2,\ldots,h\}$. Then  $|\mathscr{R}^*|=hr$ and $|\mathscr{C}^*| = hn$.
\end{proposition}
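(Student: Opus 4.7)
The plan is to reduce the two cardinality statements $|\mathscr{C}^*| = hn$ and $|\mathscr{R}^*| = hr$ to a single injectivity check: that the translation map $(y, j) \mapsto y + jM$, defined on $\mathscr{C} \times \{0, 1, \ldots, h-1\}$, has pairwise distinct values. Once that is established, counting the complexes and reactions introduced by each of the $h-1$ replicas (together with the original $\mathscr{N}$) is immediate. I would treat $j = 0$ as the embedding corresponding to $\mathscr{N}^*_1 = \mathscr{N}$ and $j \in \{1, \ldots, h-1\}$ as the replicas $\mathscr{N}^*_2, \ldots, \mathscr{N}^*_h$.

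The key step is the injectivity. Suppose $y + jM = y' + j'M$ with $y, y' \in \mathscr{C}$ and $j, j' \in \{0, 1, \ldots, h-1\}$. Reading this coordinate-wise gives $y_i - y'_i = (j' - j)M$ for every species index $i$. By the definition of $M = 1 + \max\{y_i : y \in \mathscr{C}\}$, every stoichiometric coefficient satisfies $0 \le y_i, y'_i \le M - 1$, so $|y_i - y'_i| \le M - 1 < M$. If $j \ne j'$ then $|(j' - j)M| \ge M$, a contradiction. Hence $j = j'$, and cancelling forces $y = y'$. This shows that the $h$ translates of $\mathscr{C}$ are pairwise disjoint subsets of $\mathbb{R}^{\mathscr{S}}$, each of cardinality $n$, so $\mathscr{C}^* = \bigsqcup_{j=0}^{h-1}(\mathscr{C} + jM)$ has cardinality $hn$.

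For the reaction count, each original reaction $y \to y' \in \mathscr{R}$ gives rise to the family $\{(y + jM) \to (y' + jM) : j = 0, 1, \ldots, h-1\}$. Distinctness within this family follows from the injectivity argument applied to the reactant complex; distinctness across different original reactions follows because the translated reactant and product together determine, by subtracting $jM$ from each, the original pair $(y, y')$. Consequently the $h$ sets $\mathscr{R}^*_0 := \mathscr{R}, \mathscr{R}^*_1, \ldots, \mathscr{R}^*_{h-1}$ are pairwise disjoint subsets of $\mathscr{R}^*$ of size $r$ each, yielding $|\mathscr{R}^*| = hr$.

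There is no real obstacle; the entire argument is bookkeeping supported by the single inequality $M > \max_{y \in \mathscr{C}, i} y_i$, which is built into the construction precisely so that the gap between consecutive replicas exceeds the maximal coordinate range of a complex. The only subtlety is being careful that the injectivity argument applies to both the reactant and product coordinates simultaneously, so that reactions (not merely complexes) cannot collide across different replicas.
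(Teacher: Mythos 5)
Your proof is correct, and it is exactly the argument the construction is designed to support: the paper itself states this Proposition as a citation from \cite{FTJM2020} without reproducing a proof, but the sentence preceding it (``Each of these complexes are different from all existing complexes and each other'') is precisely the injectivity of $(y,j)\mapsto y+jM$ that you establish via the bound $0\le y_i\le M-1<M$. Your extension of the disjointness argument from complexes to reaction pairs is the right bookkeeping and completes the count $|\mathscr{C}^*|=hn$, $|\mathscr{R}^*|=hr$.
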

The transformation is used to extend the multistationarity algorithm in \cite{hmr2019,hmr22019} from power-law to poly-PL systems.
Further details of the STAR-MSC transformation are provided in \cite{MAGP2019,MHRM2020}.

The {\bf{rate constant-interaction map decomposable kinetics (RIDK)}} was introduced in \cite{NEML2019}.
These are chemical kinetics that have constant rates. Formally, it is a kinetics such that for each reaction $R_q$, the coordinate function $K_q : \Omega \to \mathbb{R}$ can be written in the form $K_q(x) = k_qI_{K,q}(x)$,
with $k_q \in \mathbb{R}_{>0}$ (called rate constant) and $\Omega \in \mathbb{R}^m$. We call the map $I_K : \Omega \to \mathbb{R}^r$ defined
by $I_{K,q}$ as the {\bf{interaction map}}.

A poly-PL kinetics $K$ is a rate constant-interaction decomposable (RID) kinetics, the interaction $I_K$ being the poly-PL function. By definition, an RID kinetics is called {\bf complex factorizable} \cite{NEML2019} if at reach branching point of the network, any branching reaction has the same interaction. The subset of such kinetics is denoted by PY-RDK. We have the following Proposition from \cite{FTJM2020}:

\begin{proposition}
	\label{prop:complex:factorizable}
	Let $\left\{ {{K_j}} \right\}$ be the canonical PL-representation of the poly-PL kinetic system $(\mathscr{N}, K)$.  $(\mathscr{N}, K)$ is complex factorizable if and only if for each $j$, $(\mathscr{N}, K_j)$ is a PL-RDK system and for any two branching reactions $R$ and $R'$ of a node, ${A_{Rj}} = {A_{R'j}}$.
\end{proposition}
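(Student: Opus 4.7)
The plan is to unpack the definition of complex factorizability (CF) for RID kinetics specialized to the poly-PL setting, and then match the resulting functional identity termwise using the structure of the canonical PL-representation.

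First, I would observe that a poly-PL kinetics $K$ is an RID kinetics with interaction map $I_{K,R}(x) = \sum_{j=1}^{h} a_{Rj} x^{F_{Rj}}$ for each reaction $R$. By the definition of CF given just before the proposition, $(\mathscr{N},K)$ is complex factorizable if and only if for any two branching reactions $R, R'$ of the same node one has $I_{K,R}(x) = I_{K,R'}(x)$ for all $x \in \mathbb{R}^{m}_{>0}$. This reduces the statement to a purely algebraic comparison of two poly-PL expressions.

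Second, for the forward direction ($\Rightarrow$), I would invoke linear independence of distinct monomials $x^F$ on $\mathbb{R}^{m}_{>0}$: after collecting like terms, the equality $\sum_j a_{Rj} x^{F_{Rj}} = \sum_j a_{R'j} x^{F_{R'j}}$ forces the two multisets of (exponent, coefficient) pairs to coincide. The canonical PL-representation places the terms in lexicographic order of the exponent vectors and pads every reaction to the common length $h$, so this multiset equality lifts to a term-by-term equality: $F_{Rj} = F_{R'j}$ and $a_{Rj} = a_{R'j}$ for every $j \in \{1,\ldots,h\}$. Reading these two conclusions off gives exactly the two conditions in the proposition: the equality of rows of $F_j$ for reactions with the same reactant complex is the PL-RDK property of $(\mathscr{N},K_j)$, while $a_{Rj}=a_{R'j}$ is precisely $A_{Rj}=A_{R'j}$.

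Third, the reverse direction ($\Leftarrow$) is a direct verification. Assuming, for every pair $R, R'$ of branching reactions at a common node and for every $j$, that $F_{Rj} = F_{R'j}$ (from PL-RDK of $(\mathscr{N},K_j)$) and $A_{Rj}=A_{R'j}$, the interaction maps $I_{K,R}$ and $I_{K,R'}$ agree termwise, hence as functions on $\mathbb{R}^{m}_{>0}$, so $K$ is CF by the RID characterization recalled above.

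The main obstacle I anticipate is bookkeeping around the length-normalization device in the canonical representation, whereby a reaction with $h_i < h$ has its last term split into $h - h_i + 1$ identical copies with rescaled coefficients. I would handle this by noting that after lexicographic ordering, the duplicated monomials appear in consecutive positions on both sides, and the splitting coefficients $1/(h-h_i+1)$ are determined solely by $h_i$; thus the termwise matching in the preceding step is unaffected by the padding, and the equivalence goes through cleanly.
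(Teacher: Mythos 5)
The paper itself gives no proof of this proposition: it is imported verbatim from \cite{FTJM2020}, so there is no in-text argument to compare yours against. Your reconstruction is correct and is the natural one: reduce complex factorizability of an RID kinetics to equality of the interaction maps at each branching node, then use linear independence of distinct power functions $x^{F}$ on $\mathbb{R}^{m}_{>0}$ to upgrade that functional identity to a termwise identity in the canonical PL-representation, which is exactly the conjunction of ``each layer $(\mathscr{N},K_j)$ is PL-RDK'' and ``$A_{Rj}=A_{R'j}$''. The converse is, as you say, immediate.

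One precision point in the forward direction: linear independence only forces the \emph{collected} coefficient of each distinct exponent to agree between $I_{K,R}$ and $I_{K,R'}$; to conclude that the listed pairs $(F_{Rj},a_{Rj})$ match position by position you must additionally use that, before padding, each reaction's exponents $F_{R1},\dots,F_{Rh_R}$ are pairwise distinct and the coefficients $a_{Rj}$ are strictly positive. The paper's definition nominally allows $a_{ij}\ge 0$, and with a genuinely zero coefficient the termwise matching can fail even though the interactions coincide (e.g., a padded single term $\tfrac{a}{2}x^{F}+\tfrac{a}{2}x^{F}$ versus $a\,x^{F}+0\cdot x^{G}$ define the same interaction but have different canonical data). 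This is a normalization the proposition itself tacitly relies on rather than a defect of your argument, but your ``collecting like terms'' step should state it explicitly; with that assumption in place, your handling of the lexicographic ordering and of the padding coefficients $1/(h-h_i+1)$ is exactly what is needed and the proof goes through.
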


\subsection{The associated poly-PL system of a Hill-type system}
For a Hill type kinetics $K$ and a reaction $r_k$, we first write $M_k$ and $T_k$ as products of two factors each, one of which contains the expressions for the positive kinetic orders and the other for the negative ones, i.e., ${M_k} = {M_k}^ + {M_k}^ - $ and ${T_k} = {T_k}^ + {T_k}^ - $, respectively.
We then write $\dfrac{{{M_k}^{-} }}{{{T_k}^{-} }} = \dfrac{1}{{{T_k}^{'}}}$, 
where ${T_k}^{'} = \displaystyle\prod {\left( {{d_{ki}}{x_i}^{\left| {{F_{ki}}} \right|} + 1} \right)}$.
Hence, $\dfrac{{{M_k}}}{{{T_k}}} = \dfrac{{{M_k}^{+} }}{{{T_k}^{+} {T_k}^{'}}}$. Let
$T\left( x \right): = \displaystyle \prod\limits_k {{T_k}^{+} {T_k}^{'}} $.
$T(x)$ can be rewritten as a product of distinct bi-PL factors
$${\left| {{T_l}} \right|} \in {\mathscr{F}_K}: = \left\{ {{d_{ki}} + {x_i}^{{F_{ki}}},{d_{ki}}{x_i}^{\left| {{F_{ki}}} \right|} + 1} \right\},$$
i.e., $T\left( x \right) = \displaystyle \prod\limits_l {{{\left| {{T_l}} \right|}^{\mu \left( l \right)}}} $
where $\mu(l) =$ the number of times the bi-PL occurs in $T(x)$. 

For a bi-PL ${\left| {{T_l}} \right|}$, we define $\omega (l) :=$ maximal number of occurrence in a ${T_k}^{+} {T_k}^{'}$, $k = 1,\ldots, r$.
Clearly, for each bi-PL, $\omega (l) \le \mu(l)$. We can now define the central concept for the construction:

\begin{definition}
	${\text{LCD}}\left( K \right) = \displaystyle \prod {{{\left| {{T_l}} \right|}^{\omega \left( l \right)}}}$ is the least common denominator of the Hill-type kinetics $K$.
\end{definition}

Clearly, LCD$(K)$ is a factor of $T(x)$. For each reaction $q$, ${{T_q}^{+} {T_q}^{'}}$is a factor of LCD$(K)$, i.e.,
${T_q}^{+} {T_q}^{'}{L_q} = {\text{LCD}}\left( K \right)$ or $\dfrac{1}{{{T_q}^{+} {T_q}^{'}}} = \dfrac{{{L_q}}}{{{\text{LCD}}\left( K \right)}}$.
Hence, ${K_q}\left( x \right) = {k_q}\dfrac{{{M_q}^{+} \left( x \right){L_q}\left( x \right)}}{{{\text{LCD}}\left( K \right)}}$.
We now formulate the new definition of $K_{\text{PY}}$:

\begin{definition}
	The kinetics given by ${K_{{\text{PY}},q}}\left( x \right): = {k_q}{M_q}^ + \left( x \right){L_q}\left( x \right)$ is called the associated poly-PL kinetics of an HTK given by
	${K_q}\left( x \right) = {k_q}\dfrac{{{M_q}\left( x \right)}}{{{T_q}\left( x \right)}}$.
\end{definition}

The key properties of $K_\text{PY}$ are derived in the following theorem:
\begin{theorem}
	For any HTK system $(\mathscr{N}, K)$ we have:
	\begin{itemize}
		\item[i.] $Z_+(\mathscr{N}, K) = Z_+(\mathscr{N}, K_\text{PY})$
		\item[ii.] $E_+(\mathscr{N}, K) = E_+(\mathscr{N}, K_\text{PY})$
		\item[iii.] Like $K$, $K_{\text{PY}}$ is defined on the whole nonnegative orthant, i.e., $K_{\text{PY}} \in$ PY-NIK.
		%\item[iii.] the number of terms in $K_{\text{PY},q}(x)$ is $h_q = 2^{(r -1)|{\rm supp} \ d_q|}$ , therefore, $h = 2^{(r -1)d_{\rm max}}$, where $d_{\rm max} = {\rm max}_q |{\rm supp} \ d_q|$
	\end{itemize}
	\label{theorem:equilibriasets}
\end{theorem}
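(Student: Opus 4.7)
The plan is to exhibit a single strictly positive scalar function relating $K$ and $K_{\text{PY}}$ componentwise, from which all three claims follow almost by inspection. From the derivation immediately preceding the definition of $K_{\text{PY}}$ we already have
\[
K_q(x) \;=\; k_q\,\frac{M_q^+(x)\,L_q(x)}{\text{LCD}(K)(x)} \;=\; \frac{K_{\text{PY},q}(x)}{\text{LCD}(K)(x)}
\]
for every reaction $q$, so as vectors in $\mathbb{R}^r$ one has $K(x) = \text{LCD}(K)(x)^{-1}\,K_{\text{PY}}(x)$, with the \emph{same} scalar factor in every component. The first step is to verify that $\text{LCD}(K)(x)>0$ on the whole nonnegative orthant: each bi-PL factor $|T_l|\in\mathscr{F}_K$ is either $d_{ki}+x_i^{F_{ki}}$ with $F_{ki}>0$, bounded below by $d_{ki}>0$ (using $\mathrm{supp}(D_q)=\mathrm{supp}(F_q)$), or $d_{ki}x_i^{|F_{ki}|}+1$ with $F_{ki}<0$, bounded below by $1$. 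Hence the product is strictly positive on $\mathbb{R}^m_{\ge 0}$.

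Given this, parts (i) and (ii) are immediate. Applying $I_a$ and $N$ respectively to both sides of the identity yields
\[
I_a K(x) \;=\; \frac{1}{\text{LCD}(K)(x)}\,I_a K_{\text{PY}}(x), \qquad N K(x) \;=\; \frac{1}{\text{LCD}(K)(x)}\,N K_{\text{PY}}(x),
\]
and since the common factor is strictly positive on $\mathbb{R}^m_{>0}$, the zero sets coincide. This gives $Z_+(\mathscr{N},K)=Z_+(\mathscr{N},K_{\text{PY}})$ and $E_+(\mathscr{N},K)=E_+(\mathscr{N},K_{\text{PY}})$.

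For (iii) I would argue that $K_{\text{PY},q}(x)=k_q M_q^+(x) L_q(x)$ is a poly-PL with only nonnegative kinetic orders. $M_q^+$ is a monomial whose exponents are the positive $F_{qi}$, hence $\ge 0$. The factor $L_q=\text{LCD}(K)/(T_q^+T_q')$ is a product (with appropriate multiplicities) of bi-PL elements of $\mathscr{F}_K$, each of which is a binomial with nonnegative exponents. Distributing this product produces an explicit poly-PL representation of $K_{\text{PY},q}$ in which every monomial has nonnegative exponents, so $K_{\text{PY}}\in\text{PY-NIK}$ and extends (indeed, is defined as a polynomial expression) on all of $\mathbb{R}^m_{\ge 0}$. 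The whole argument is essentially bookkeeping once $\text{LCD}(K)$ is recognized as a single positive common denominator; the only subtlety worth a careful sanity check is the rewriting $M_k^-/T_k^-=1/T_k'$, which is what ensures that every factor entering $\text{LCD}(K)$ is genuinely polynomial with nonnegative exponents and strictly positive, so that the cancellation and the zero-set argument are valid globally rather than only on the positive orthant.
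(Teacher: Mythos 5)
Your proof is correct and follows essentially the same route as the paper: both rest on the componentwise identity $K(x)=\mathrm{LCD}(K)(x)^{-1}K_{\text{PY}}(x)$ with a single strictly positive scalar denominator, so that applying $I_a$ and $N$ preserves the zero sets on $\mathbb{R}^m_{>0}$. Your additional checks (positivity of $\mathrm{LCD}(K)$ on the closed orthant via $\mathrm{supp}(D_q)=\mathrm{supp}(F_q)$, and the explicit nonnegativity of the exponents in $M_q^+L_q$ for part (iii)) merely flesh out what the paper dispatches with ``follows directly from the construction.''
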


\begin{proof}
	For a reaction $q:{y_q} \to y{'_q}$, we denote the characteristic functions of the reactant and product complexes by ${\omega _q}$ and $\omega {'_q}$, respectively.
	To show (i.), consider $K(x)$'s CFRF (complex formation rate function) and we have
	\[g\left( x \right) = \sum\limits_{q \in \mathscr{R}} {{k_q}\frac{{{M_q}\left( x \right)}}{{{T_q}\left( x \right)}}\left( {\omega {'_q} - {\omega _q}} \right)}  = \sum\limits_{q \in \mathscr{R}} {{k_q}\frac{{M_q}^{+} \left( x \right){L_q}\left( x \right)}{{\text{LCD}}\left( K \right)}\left( {\omega {'_q} - {\omega _q}} \right)} \]
	\[ = \frac{1}{{\text{LCD}}\left( K \right)}\sum\limits_{q \in \mathscr{R}} {{k_q}{{M_q}^{+} \left( x \right){L_q}\left( x \right)}\left( {\omega {'_q} - {\omega _q}} \right)}  = \frac{1}{{\text{LCD}}\left( K \right)}{g_{{\text{PY}}}}\left( x \right)\]
	where ${g_{{\text{PY}}}}\left( x \right)$ is $K_\text{PY}$'s CFRF.
	Since for $x > 0$, $1/ [{\text{LCD}}\left( K \right)] > 0$, we obtain $g(x) = 0$ if and only if  ${g_{{\text{PY}}}}\left( x \right) = 0$, which is the claim.
	Analogously, for the species rate formation functions, we obtain for $x > 0$, $f(x) = 0$ if and only if  ${f_{{\rm{PY}}}}\left( x \right) = 0$ and hence (ii). Claim (iii) follows directly from the construction.
\end{proof}

We denote the canonical PL-representation of $K_\text{PY} = K_{\text{PY},1} + ... + K_{\text{PY},h}$, where $h \le 2^{(r -1)d_{\rm max}}$ . The kinetic order matrix of $K_{\text{PY},j}$ with $F_j$. As usual, the kinetic order matrix of $M := (M_q)$ is $F$.

\begin{remark}
	If $T(x)$ is constant, then $(\mathscr{N}, K)$ is linearly conjugate to $(\mathscr{N}, K_\text{PY})$. A trivial condition for this is when the kinetic order matrix $F$ is the zero matrix, in which case both kinetics are also constant functions.
\end{remark}

Using $K_{\text{PY}}$, we obtain an analogue of a result in Section 15 of \cite{WIFE2013} concerning the relationship of multistationarity in a Hill-type system and in an associated power-law system:

\begin{proposition}
	If the Hill-type system $(\mathscr{N},K)$ has multiple positive steady states in a stoichiometric class $x + S$, then the STAR-MSC transform $(\mathscr{N}^*,K_{\text{PY}}^*)$ of its associated poly-PL system $(\mathscr{N},K_{\text{PY}})$ also has multiple positive steady states in the same stoichiometric class.
\end{proposition}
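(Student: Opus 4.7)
The plan is to chain together two equalities. First, by Theorem \ref{theorem:equilibriasets}(ii), we have $E_+(\mathscr{N}, K) = E_+(\mathscr{N}, K_{\text{PY}})$, so any two distinct positive equilibria $x_1, x_2 \in x + S$ of the Hill-type system are automatically positive equilibria of the associated poly-PL system $(\mathscr{N}, K_{\text{PY}})$ in the same stoichiometric class. Thus the proof reduces to showing that multistationarity of $(\mathscr{N}, K_{\text{PY}})$ in the class $x + S$ implies multistationarity of its STAR-MSC transform $(\mathscr{N}^*, K_{\text{PY}}^*)$ in that same class.

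Second, I would invoke the defining features of the STAR-MSC construction. Each reaction $R_i : y_i \to y_i'$ is replaced by $h$ reactions $R_i^{(j)} : y_i + (j-1)M \to y_i' + (j-1)M$ for $j = 1, \ldots, h$, and each replica has the same reaction vector $y_i' - y_i$ as the original. Consequently $S^* = S_1^* + \cdots + S_h^* = S$, so stoichiometric classes of $\mathscr{N}$ coincide with those of $\mathscr{N}^*$, preserving the class $x + S$. Next, $K_{\text{PY}}^*$ assigns to the $j$-th replica reaction $R_i^{(j)}$ the $j$-th term $K_{\text{PY},j}$ of the canonical PL-representation of $K_{\text{PY},i}$. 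Summing across replicas, the species formation rate function satisfies
\[
f^*(x) \;=\; \sum_{i=1}^{r}\sum_{j=1}^{h} K_{\text{PY},j}^{(i)}(x)\,(y_i' - y_i) \;=\; \sum_{i=1}^{r} K_{\text{PY},i}(x)\,(y_i' - y_i) \;=\; f_{\text{PY}}(x),
\]
so $E_+(\mathscr{N}^*, K_{\text{PY}}^*) = E_+(\mathscr{N}, K_{\text{PY}})$. In particular $x_1, x_2$ remain distinct positive equilibria of $(\mathscr{N}^*, K_{\text{PY}}^*)$, and they lie in the same stoichiometric class $x + S$ of $\mathscr{N}^*$ by the $S$-invariance just established.

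The main obstacle is bookkeeping rather than conceptual: one must be careful to match the $h$ terms in the canonical PL-representation of $K_{\text{PY}}$ to the $h$ replicas of each reaction in $\mathscr{N}^*$ in such a way that the sum of rate functions over the replicas reproduces $K_{\text{PY},i}$ on the original reactants. Once this correspondence is written down explicitly (as in \cite{FTJM2020}), both the equality of species formation rate functions and the equality $S^* = S$ follow immediately, and the proposition is a direct consequence combined with Theorem \ref{theorem:equilibriasets}(ii).
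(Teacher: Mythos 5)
Your proof is correct and follows essentially the same route as the paper's: reduce to the poly-PL system via Theorem \ref{theorem:equilibriasets}(ii), then use that STAR-MSC is a dynamic equivalence preserving the stoichiometric subspace. The only difference is that you unpack explicitly (via the replica reaction vectors and the matching of canonical PL-representation terms to replicas) what the paper simply cites as known properties of the STAR-MSC transformation.
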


\begin{proof}
	$(\mathscr{N},K)$ and $(\mathscr{N},K_{\text{PY}})$ have identical positive equilibria sets as shown earlier. Since STAR-MSC is a dynamic equivalence, the positive equilibria sets of $(\mathscr{N},K_{\text{PY}})$ and $(\mathscr{N}^*,K_{\text{PY}}^*)$ are also identical. Since STAR-MSC also leaves the stoichiometric subspace invariant, we obtain the claim. 
\end{proof}

We say that the result is only analogous since $\mathscr{N}^*$ is formally not identical to $\mathscr{N}$, though ``essentially'' it is $\mathscr{N}$, being the union of $\mathscr{N}$ and $(h-1)$ replicas.

We now introduce the following definition:

\begin{definition}
	HT-RDK := HTK $\cap$ CFK
\end{definition}

A further important property of the associated poly-PL kinetics is the following:

\begin{proposition}
	$K \in {\rm{HT}{\text{-}\rm{RDK}}} \Leftrightarrow {K_{\rm{PY}}} \in {\rm{PY}{\text{-}\rm{RDK}}}$
	\label{prop:KPY}
\end{proposition}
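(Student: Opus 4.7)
The plan is to reduce the equivalence to the single statement that $K$ and $K_{\text{PY}}$ are complex factorizable simultaneously. Since HT-RDK is defined as $\text{HTK}\cap\text{CFK}$ and, by analogy, PY-RDK is the complex factorizable subclass of poly-PL kinetics (and Theorem \ref{theorem:equilibriasets}(iii) guarantees $K_{\text{PY}}\in\text{PY-NIK}$ in every case), the proposition reduces to
\[
K\in\text{CFK}\quad\Longleftrightarrow\quad K_{\text{PY}}\in\text{CFK}.
\]

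First I would unpack what complex factorizability of an RID kinetics means at the level of coordinate functions: for every pair of reactions $q,q'$ sharing a reactant complex, the interactions $K_q/k_q$ and $K_{q'}/k_{q'}$ must coincide on $\mathbb{R}^m_{>0}$. For the Hill-type kinetics $K$, this interaction is $M_q/T_q$. For $K_{\text{PY}}$, the identity
\[
K_q(x) \;=\; k_q\,\frac{M_q^{+}(x)\,L_q(x)}{\text{LCD}(K)(x)},
\]
already derived in the construction of $K_{\text{PY}}$, shows that the interaction of $K_{\text{PY}}$ at reaction $q$ is $M_q^{+}L_q$.

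The key observation is then that $\text{LCD}(K)$ depends only on the network and its Hill-type parameters, not on the individual reaction index $q$, and is strictly positive throughout $\mathbb{R}^m_{>0}$. Multiplying or dividing by this common nonvanishing function preserves equalities of functions, so for any branching pair $q,q'$,
\[
\frac{M_q}{T_q} \;=\; \frac{M_{q'}}{T_{q'}}
\quad\Longleftrightarrow\quad
M_q^{+}\,L_q \;=\; M_{q'}^{+}\,L_{q'}.
\]
Quantifying this biconditional over every branching pair yields exactly $K\in\text{CFK}\Leftrightarrow K_{\text{PY}}\in\text{CFK}$, and combining with the definitions of HT-RDK and PY-RDK closes both implications.

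I do not expect a genuine obstacle; the argument is essentially tautological once the calculation $M_q/T_q = M_q^{+}L_q/\text{LCD}(K)$ is in hand. The only step that deserves a moment of care is making sure that the functional equality $M_q^{+}L_q = M_{q'}^{+}L_{q'}$ is the correct reading of complex factorizability for $K_{\text{PY}}$, as opposed to the seemingly stronger term-by-term agreement in the canonical PL-representation referred to in Proposition \ref{prop:complex:factorizable}; but since the interaction is a well-defined function irrespective of how it is written as a sum of power-law terms, the two formulations coincide.
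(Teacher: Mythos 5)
Your argument is correct and is essentially the paper's own proof: both reduce the proposition to the equality of interaction functions at branching reactions and observe that multiplying $\dfrac{M_q}{T_q} = \dfrac{M_{q'}}{T_{q'}}$ by the reaction-independent, everywhere-positive $\mathrm{LCD}(K)$ gives $M_q^{+}L_q = M_{q'}^{+}L_{q'}$, and conversely. Your closing remark reconciling the functional definition of PY-RDK with the representation-level criterion of Proposition \ref{prop:complex:factorizable} is a sound clarification (distinct power-law exponents give linearly independent functions on $\mathbb{R}^m_{>0}$, so functional equality forces term-by-term agreement), but it does not alter the substance of the argument.
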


\begin{proof}
	For reactions $q$ and $q'$ with the same reactant, we have $${I_{K,q}} = \dfrac{{{M_q}}}{{{T_q}}} = \dfrac{{{M_{q'}}}}{{{T_{q'}}}} = {I_{K,q'}}.$$ Multiplying the equations by ${\text{LCD}}\left( K \right)$, we obtain ${M_q}^{+} {L_q} = {M_{q'}}^{+} {L_{q'}}$, which is equivalent to the equality of the interaction functions of $K_{{\rm{PY}},q}$ and $K_{{\rm{PY}},{q'}}$. 
\end{proof}

In \cite{AJLM2017}, Arceo et al. defined HT-RDKD as the set of HT kinetics such that for any two reactions $q$ and $q'$ with the same reactant, $F_q = F_{q'}$ and $D_q = D_{q'}$, where $F$ and $D$ are the kinetic order matrix (of the numerator) and the dissociation matrix respectively. In the same paper, it was shown that HT-RDKD is contained in HTK $\cap$ CFK. Without the assumption of ${\rm supp} (D_q) = {\rm supp} (F_q)$, we would have obtained the following as an example of HTK $\cap$ CFK $\backslash$ HT-RDKD.

\noindent Consider the CRN $\begin{array}{*{20}{c}}
	{{R_1}:{X_1} + {X_2} \to {X_1}}\\
	{{R_2}:{X_1} + {X_2} \to {X_2}}
\end{array}$ with kinetics defined by
$F = \left[ {\begin{array}{*{20}{c}}
		1&0\\
		1&1
\end{array}} \right]$ and
$D = \left[ {\begin{array}{*{20}{c}}
		1&0\\
		1&0
\end{array}} \right]$.
Then
\[K\left( x \right) = \left[ {\begin{array}{*{20}{c}}
		{\dfrac{{{M_1}}}{{{T_1}}}}\\
		{\dfrac{{{M_2}}}{{{T_2}}}}
\end{array}} \right] = \left[ {\begin{array}{*{20}{c}}
		{\dfrac{{{x_1}}}{{1 + {x_1}}}}\\
		{\dfrac{{{x_1}{x_2}}}{{\left( {1 + {x_1}} \right){x_2}}}}
\end{array}} \right] = \left[ {\begin{array}{*{20}{c}}
		{\dfrac{{{x_1}}}{{1 + {x_1}}}}\\
		{\dfrac{{{x_1}}}{{1 + {x_1}}}}
\end{array}} \right]
\]
with rate constants equal to 1.
Note that $\dfrac{{{M_1}}}{{{T_1}}} = \dfrac{{{M_2}}}{{{T_2}}}$, but ${M_1}$ and ${M_2}$ are not necessarily equal.

\begin{proposition}
	HT-RDKD = HT-RDK assuming ${\rm supp} (D_q) = {\rm supp} (F_q)$.
\end{proposition}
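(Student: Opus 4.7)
The plan is to prove the two inclusions separately. The forward containment $\text{HT-RDKD} \subseteq \text{HT-RDK}$ is already recorded in [AJLM2017] and is immediate: if $F_q = F_{q'}$ and $D_q = D_{q'}$ for any two reactions sharing a reactant, then $M_q/T_q = M_{q'}/T_{q'}$, so the map $\Psi_K$ sending the common reactant complex $y$ to $M_q(x)/T_q(x)$ exhibits $K$ as complex factorizable. For the nontrivial reverse containment, I would take $K \in \text{HT-RDK}$ and fix two reactions $q, q'$ with common reactant. Complex factorizability forces $K_q/k_q = K_{q'}/k_{q'}$ on $\mathbb{R}_{>0}^m$, which, once the supp hypothesis is invoked so that only indices in the support contribute nontrivial factors, becomes an identity of products of single-variable Hill factors:
\[
\prod_{i \in \mathrm{supp}(F_q)} f_{F_{qi}, d_{qi}}(x_i) \;=\; \prod_{i \in \mathrm{supp}(F_{q'})} f_{F_{q'i}, d_{q'i}}(x_i),
\]
where $f_{a,d}(x) := x^a/(d + x^a)$ is nonconstant for $a \ne 0$, $d > 0$.

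Next I would show $\mathrm{supp}(F_q) = \mathrm{supp}(F_{q'})$: if some $i$ lies in the symmetric difference, then setting $x_j = 1$ for $j \ne i$ collapses one side to a constant while the other is a nonconstant function of $x_i$, a contradiction. For each $i$ in the common support, the same substitution yields
\[
c_q\, f_{F_{qi}, d_{qi}}(x_i) \;=\; c_{q'}\, f_{F_{q'i}, d_{q'i}}(x_i),
\]
with $c_q, c_{q'} > 0$ absorbing the remaining factors evaluated at $1$. Cross-multiplying and writing $C := c_{q'}/c_q$ produces the functional identity
\[
(1 - C)\, x^{F_{qi}} + d_{q'i}\, x^{F_{qi} - F_{q'i}} - C\, d_{qi} \;=\; 0 \quad \text{on } (0,\infty).
\]

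The technical heart is extracting $F_{qi} = F_{q'i}$ and $d_{qi} = d_{q'i}$ from this identity. Since the kinetic orders are arbitrary real numbers a degree count is unavailable; instead I would appeal to the linear independence of $\{x^\alpha : \alpha \in \mathbb{R}\}$ as functions on $(0,\infty)$. The three exponents $F_{qi}$, $F_{qi} - F_{q'i}$, $0$ cannot all be distinct, for then every coefficient would vanish and $d_{q'i} = 0$ would contradict $i \in \mathrm{supp}(D_{q'})$. The coincidences $F_{qi} = 0$ and $F_{qi} - F_{q'i} = F_{qi}$ are excluded by $F_{qi}, F_{q'i} \ne 0$, leaving only $F_{qi} = F_{q'i}$; the reduced linear relation in $x^{F_{qi}}$ and $x^0$ then forces $C = 1$ and $d_{qi} = d_{q'i}$. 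Outside the common support, both $d_{qi}$ and $d_{q'i}$ vanish by the supp hypothesis, so $F_q = F_{q'}$ and $D_q = D_{q'}$, giving $K \in \text{HT-RDKD}$.
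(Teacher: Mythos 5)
Your proof is correct and follows essentially the same route as the paper's: both reduce the problem, via the hypothesis ${\rm supp}(D_q)={\rm supp}(F_q)$, to comparing products of per-species Hill factors $x_i^{F_{qi}}/(d_{qi}+x_i^{F_{qi}})$, with the indices where $d_{qi}=0$ dismissed as trivial factors equal to $1$. The one difference is completeness rather than strategy: the paper simply asserts that $M_q/T_q = M_{q'}/T_{q'}$ forces $M_q = M_{q'}$ and $D_q = D_{q'}$, whereas you actually establish this implication by freezing all variables but one and invoking the linear independence of the power functions $x^\alpha$, $\alpha\in\mathbb{R}$, on $(0,\infty)$ --- a step worth having on record, since the kinetic orders are arbitrary reals and no degree argument is available.
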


\begin{proof}
	We already mentioned that HT-RDKD $\subseteq$ HT-RDK. We are left with the other containment. Note that for a reaction $q$, \[{K_q}\left( x \right) = k_q \frac{{{M_q}\left( x \right)}}{{{T_q}\left( x \right)}} = {k_q}\frac{{\prod\limits_{i = 1}^m {{x_i}^{{F_{qi}}}} }}{{\prod\limits_{i = 1}^m {\left( {{d_{qi}} + {x_i}^{{F_{qi}}}} \right)} }} = {k_q}\prod\limits_{i = 1}^m {\frac{{{x_i}^{{F_{qi}}}}}{{{d_{qi}} + {x_i}^{{F_{qi}}}}}} .\]
	Consider arbitrary $x_i$ for a reaction $q$.\\
	{\bf{Case 1:}} Suppose an entry $d_{qi}=0$. Since ${\rm supp} (D_q) = {\rm supp} (F_q)$, then $f_{qi}=0$. Thus, the corresponding factor is \[\frac{{{x_i}^{{F_{qi}}}}}{{{d_{qi}} + {x_i}^{{F_{qi}}}}} = \frac{{{x_i}^0}}{{0 + {x_i}^0}} = \frac{1}{1}= 1,\]
	which has no effect on both $\dfrac{{{M_q}\left( x \right)}}{{{T_q}\left( x \right)}}$ and $M_q(x)$.\\
	{\bf{Case 2:}} Suppose an entry $d_{qi}\ne 0$.
	%Again ${\rm supp} (D_q) = {\rm supp} (F_q)$, which implies $f_{ij} \ne 0$.
	It follows that the corresponding factor
	$\dfrac{{{x_i}^{{F_{qi}}}}}{{{d_{qi}} + {x_i}^{{F_{qi}}}}}$
	stays as it is (i.e., no necessary cancellation).\\
	%(i.e., no cancellation between the numerator and the denominator)
	Therefore, for two reactions $q$ and $q'$,
	\[\frac{{{M_q}\left( x \right)}}{{{T_q}\left( x \right)}} = \frac{{{M_{q'}}\left( x \right)}}{{{T_{q'}}\left( x \right)}} \Rightarrow {\text{ both }} {M_q}\left( x \right) = {M_{q'}}\left( x \right) {\text{ and }} {D_q} = {D_{q'}}.\] 
	
\end{proof}

\subsection{Comparison with a case-specific procedure for determining positive equilibria}
It is instructive to compare the association of the poly-PL kinetic system $(\mathscr{N}, K_\text{PY})$ to an HTK system $(\mathscr{N}, K)$ for any set of rate constants with a procedure for determining the equilibria set of $(\mathscr{N}, K)$ for a given set of rate constants. This procedure also uses factoring out common denominators, but on a species-dependent basis.

For each species $X_i$, for $i = 1,...,m$, we define $${\mathscr{R}_i} := \left\{ {y \to y':{X_i} \in {\mathop{\text{supp } }\nolimits}  y \cup {\mathop{\text{supp } }\nolimits}  y'} \right\},$$ i.e., the set of reactions where $X_i$ occurs in the reactant or product complex. Furthermore, define
${T^{\left( i \right)}}\left( x \right) = \prod\limits_{k \in {\mathscr{R}_i}} {{T_k}\left( x \right)} $ and for a reaction $q$, ${\mathscr{R}_{i,q}} = {\mathscr{R}_i}\backslash \left\{ q \right\}$, ${T^{\left( i \right)}}_{\left( q \right)}\left( x \right) = \prod\limits_{k \in {\mathscr{R}_{i,q}}} {{T_k}\left( x \right)} $, ${P^{\left( i \right)}}_q\left( x \right) = {M_q}\left( x \right){T^{\left( i \right)}}_{\left( q \right)}\left( x \right)$. The $i$-th coordinate of the SFRF (= RHS of $i$-th ODE) is given by
\begin{equation*}
	%equation with label equation* without label
	\begin{split}
		{f_i}\left( x \right) &= \sum\limits_{q \in {\mathscr{R}_i}} {\frac{{{k_q}{M_q}\left( x \right)}}{{{T_q}\left( x \right)}}\left( {y{'_q} - {y_q}} \right)}  = \sum\limits_{q \in {\mathscr{R}_i}} {\frac{{{k_q}{M_q}\left( x \right){T^{\left( i \right)}}_{\left( q \right)}\left( x \right)}}{{{T^{\left( i \right)}}\left( x \right)}}\left( {y{'_q} - {y_q}} \right)} \\
		&=\frac{1}{{{T^{\left( i \right)}}\left( x \right)}}\sum\limits_{q \in {\mathscr{R}_i}} {{k_q}{P^{\left( i \right)}}_q\left( x \right)\left( {y{'_q} - {y_q}} \right)}. 
	\end{split}
\end{equation*}
Writing ${\widetilde{f}_i}\left( x \right)$ for the last sum, since $\dfrac{1}{{{T^{\left( i \right)}}\left( x \right)}} > 0$ for $x > 0$, we have
\[{{f}_i}\left( x \right) =0 \Leftrightarrow {\widetilde{f}_i}\left( x \right)=0.\]

Since only the reactions in $\mathscr{R}_i$ occur in them, the ${P^{\left( i \right)}}_q\left( x \right)$ expressions are generally much shorter than those in $K_\text{PY}(x)$, so they are much more practical for computing the equilibria sets. However, since a reaction will in general occur in more than one coordinate function $\left( { \Leftrightarrow {\mathscr{R}_i} \cap {\mathscr{R}_{i'}} \ne \varnothing {\text{ for }}i \ne i'} \right)$, it will be assigned different
${{P^{\left( i \right)}}_q\left( x \right)}$ expressions. This means that the ODE system 
$\dfrac{{d{x_i}}}{{dt}} =$
${\widetilde{f}_i}\left( x \right)$ is not generated by a chemical kinetic system since a kinetics assigns only one rate function to a reaction. Hence, this set of ODEs cannot be used to derive {\underline{kinetic system}} properties for the HTK system $(\mathscr{N}, K)$. 

\subsection{Examples}
The following are examples of Hill-type kinetic systems and their associated poly-PL kinetic (PYK) systems. Note that the equilibria sets of these systems coincide.
\begin{example}
	Let $(\mathscr{N}, K)$ with reactions ${R_1}:{X_1} \to {X_2}$ and ${R_2}:{X_2} \to {X_1}$, and $F = \left[ {\begin{array}{*{20}{c}}
			1&0\\
			0&1
	\end{array}} \right]$ and $D = \left[ {\begin{array}{*{20}{c}}
			1&0\\
			0&1
	\end{array}} \right]$. Hence, ${I_a} = \left[ {\begin{array}{*{20}{c}}
			{ - 1}&1\\
			1&{ - 1}
	\end{array}} \right]$ and $K\left( x \right) = \left[ {\begin{array}{*{20}{c}}
			{{k_1}\dfrac{{{x_1}}}{{1 + {x_1}}}}\\
			{{k_2}\dfrac{{{x_2}}}{{1 + {x_2}}}}
	\end{array}} \right]$. 
	It follows that ${K_{\text {PY}}}\left( x \right) = \left[ {\begin{array}{*{20}{c}}
			{{k_1}{x_1}\left( {1 + {x_2}} \right)}\\
			{{k_2}{x_2}\left( {1 + {x_1}} \right)}
	\end{array}} \right]$.
	We now solve the equations
	$${I_a}K\left( x \right) = \left[ {\begin{array}{*{20}{c}}
			{ - 1}&1\\
			1&{ - 1}
	\end{array}} \right]\left[ {\begin{array}{*{20}{c}}
			{{k_1}\dfrac{{{x_1}}}{{1 + {x_1}}}}\\
			{{k_2}\dfrac{{{x_2}}}{{1 + {x_2}}}}
	\end{array}} \right] = \left[ {\begin{array}{*{20}{c}}
			0\\
			0
	\end{array}} \right] {\text {\ and}}$$
	$${I_a}{K_{\text {PY}}}\left( x \right) = \left[ {\begin{array}{*{20}{c}}
			{ - 1}&1\\
			1&{ - 1}
	\end{array}} \right]\left[ {\begin{array}{*{20}{c}}
			{{k_1}{x_1}\left( {1 + {x_2}} \right)}\\
			{{k_2}{x_2}\left( {1 + {x_1}} \right)}
	\end{array}} \right] = \left[ {\begin{array}{*{20}{c}}
			0\\
			0
	\end{array}} \right].$$
	We can easily show that
	\begin{align*}
		{Z_ + }\left( {\mathscr{N},K} \right) &= {Z_ + }\left( {\mathscr{N},{K_\text{PY}}} \right)\\
		&= \left\{ {\left( {t,\frac{{{k_1}t}}{{{k_2}\left( {1 + t} \right) - {k_1}t}}} \right):{k_1},{k_2},t > 0,{k_1}t \ne {k_2}\left( {1 + t} \right)} \right\}.
	\end{align*}
	Also, $NK(x) = {I_a}K\left( x \right)$ and $N{K_\text{PY}}\left( x \right) = {I_a}{K_\text{PY}}\left( x \right)$ so $${E_ + }\left( {\mathscr{N},K} \right) = {E_ + }\left( {\mathscr{N},{K_\text{PY}}} \right).$$
	Note that ${\widetilde{f}_i}\left( x \right)=f_{\text {PY}}(x)$ for $i=1,2$ since ${\mathscr{R}_1}={\mathscr{R}_2}={\mathscr{R}}$.
\end{example}

\begin{example}
	\label{example:running1}
	Let $(\mathscr{N}, K)$ with the following assumptions:
	\[\begin{array}{*{20}{c}}
		{{R_1}:{X_1} \to {X_1} + {X_2}}\\
		{{R_2}:{X_1} + {X_2} \to {X_3}}\\
		{{R_3}:{X_3} \to {X_1}{\rm{ \ \ \ \ \ \ \ }}}
	\end{array}{\rm{\ \ \ \ \ \ }}F = \left[ {\begin{array}{*{20}{c}}
			1&0&0\\
			1&0&0\\
			0&0&1
	\end{array}} \right]{\rm{ \ \ \ \ }}D = \left[ {\begin{array}{*{20}{c}}
			1&0&0\\
			1&0&0\\
			0&0&1
	\end{array}} \right].\]
	We have
	$Y = \left[ {\begin{array}{*{20}{c}}
			1&1&0\\
			0&1&0\\
			0&0&1
	\end{array}} \right]$,
	${I_a} = \left[ {\begin{array}{*{20}{c}}
			{ - 1}&0&1\\
			1&{ - 1}&0\\
			0&1&{ - 1}
	\end{array}} \right]$
	and consider
	$K\left( x \right) = \left[ {\begin{array}{*{20}{c}}
			{\dfrac{{{x_1}}}{{1 + {x_1}}}}\\
			{\dfrac{{{x_1}}}{{ {1 + {x_1}}}}}\\
			{\dfrac{{{x_3}}}{{1 + {x_3}}}}
	\end{array}} \right].$
	Then,
	\[{K_{{\rm{PY}}}}\left( x \right) = 
	\left[ {\begin{array}{*{20}{c}}
			{{x_1}\left( {1 + {x_3}} \right)}\\
			{ {{x_1}} \left( {1 + {x_3}} \right)}\\
			{{x_3}\left( {1 + {x_1}} \right)}
	\end{array}} \right]=
	%\]
	%\[
	\left[ {\begin{array}{*{20}{c}}
			{{x_1} + {x_1}{x_3} }\\
			{{x_1}  + {x_1}{x_3} }\\
			{{x_3} + x_1 x_3}
	\end{array}} \right].\]
	We can verify that 
	\[{Z_ + }\left( {\mathscr{N},K} \right) = {Z_ + }\left( {\mathscr{N},{K_\text{PY}}} \right) =\left\{ {\left( {s,t,s} \right):s,t > 0} \right\} {\text{and}}\]
	\[{E_ + }\left( {\mathscr{N},K} \right) = {E_ + }\left( {\mathscr{N},{K_\text{PY}}} \right) =\left\{ {\left( {s,t,s} \right):s,t > 0} \right\}.\]
\end{example}

\section{Absolute concentration robustness in Hill-type kinetic systems}
\label{abs:con:rob:htk}
In this section, we apply the poly-PL method to study absolute concentration robustness (ACR) in HTK systems. We first provide a brief review of ACR concepts and related results for mass action and power-law kinetic systems. We then introduce the key concept of a Shinar-Feinberg (SF) pair of reactions for an HTK system via its associated PYK system and derive the Shinar-Feinberg ACR Theorem for a subset of deficiency one HT-RDK systems, which we call PL-equilibrated. This foundation enables the extension of ACR theory to deficiency zero HTK systems, and via network decomposition theory to larger and higher deficiency HTK systems.

\subsection{A brief review of ACR concepts and results for power-law kinetic systems}
The concept of {\bf absolute concentration robustness} ({\bf ACR}) was first introduced by Shinar and Feinberg in their well-cited paper published in Science \cite{SF2010}. ACR pertains to a phenomenon in which a species in a chemical kinetic system carries the same value for any positive steady state the network may admit regardless of initial conditions. In particular, a PL-RDK system $(\mathscr{N}, K)$ has ACR in a species $X \in \mathscr{S}$ if there exists $c^* \in E_+(\mathscr{N}, K)$ and for every other $c^{**} \in E_+(\mathscr{N}, K)$, we have $c^{**}_X=c^*_X$.

\begin{theorem} (Shinar-Feinberg Theorem on ACR for PL-RDK systems \cite {FLRM2020})
	\label{SFTACR}
	Let $\mathscr{N}=(\mathscr{S}, \mathscr{C},\mathscr{R})$ be a deficiency-one CRN and suppose that $(\mathscr{N}, K)$ is a PL-RDK system which admits a positive equilibrium. If $y, y' \in \mathscr{C}$ are nonterminal complexes whose kinetic order vectors differ only in species $X$, then the system has ACR in $X$.
\end{theorem}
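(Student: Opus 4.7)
The plan is to adapt the classical Shinar--Feinberg argument for mass action kinetics to the PL-RDK setting, where the role played by the molecularity matrix $Y^T$ in the mass action case is taken over by the kinetic order matrix $F$. The PL-RDK hypothesis is exactly what makes this substitution possible: because the kinetic order row depends only on the reactant complex, we may unambiguously associate a kinetic order vector $F_y$ to each reactant complex $y$, so that $K_q(x) = k_q\, x^{F_{y_q}}$ for every reaction $q: y_q \to y'_q$.

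First, I would fix two positive equilibria $c^*, c^{**} \in E_+(\mathscr{N}, K)$ and pass to the logarithmic ratio vector $\mu \in \mathbb{R}^m$ defined by $\mu_i := \ln(c^{**}_i / c^*_i)$. The power-law structure then gives the clean multiplicative identity
\[
\frac{K_q(c^{**})}{K_q(c^*)} \;=\; \exp\!\bigl(F_{y_q} \cdot \mu\bigr),
\]
which depends on $q$ only through its reactant complex. In particular, all reactions sharing a reactant complex $y$ are scaled by the same factor $\exp(F_y \cdot \mu)$, so one can work complex-wise rather than reaction-wise.

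The crucial step, which I expect to be the main obstacle, is to show that $F_y \cdot \mu = F_{y'} \cdot \mu$ whenever $y$ and $y'$ are nonterminal complexes lying in the same linkage class. This is where deficiency one enters: the equilibrium conditions $N K(c^*) = 0$ and $N K(c^{**}) = 0$ together with the factorization $N = Y I_a$ force the scaled flux vector to lie in a specific affine subspace of $\ker(Y I_a)$. Under the hypothesis $\delta = n - l - s = 1$, a dimension count via Feinberg's deficiency one arguments shows that the possible flux perturbations are severely restricted: in each linkage class, the nonterminal reactant complexes must all receive the \emph{same} multiplicative scaling, while any additional degree of freedom is absorbed by the single terminal strong linkage class of that component. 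This equality of scalings is exactly the statement $F_y \cdot \mu = F_{y'} \cdot \mu$ for nonterminal $y, y'$ in a common linkage class.

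Once this constraint is in hand, the conclusion is immediate. The hypothesis that the kinetic order vectors of $y$ and $y'$ differ only in species $X$ means $F_y - F_{y'} = \alpha\, e_X$ for some nonzero $\alpha \in \mathbb{R}$. Combining with $(F_y - F_{y'}) \cdot \mu = 0$ yields $\alpha\, \mu_X = 0$, hence $\mu_X = 0$, i.e.\ $c^{**}_X = c^*_X$. Since $c^{**}$ was an arbitrary positive equilibrium, the system has ACR in $X$.
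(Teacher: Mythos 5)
The paper itself offers no proof of this theorem --- it is quoted from \cite{FLRM2020} --- so the comparison is with the argument in that reference, which generalizes the Shinar--Feinberg mass-action proof exactly along the lines you indicate. Your setup is right: passing to $\mu=\ln c^{**}-\ln c^{*}$ and observing that the RDK property makes the scaling factor $e^{F_y\cdot\mu}$ a function of the reactant complex alone is the correct replacement of $Y^{T}$ by $F$, and your final step (from $(F_y-F_{y'})\cdot\mu=0$ and $F_y-F_{y'}=\alpha e_X$ to $\mu_X=0$) is fine. The genuine gap is that the entire content of the theorem sits in your ``crucial step,'' which you assert rather than prove, and the version you assert is both too weak and inaccurately motivated. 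The statement does not assume $y$ and $y'$ lie in the same linkage class, so a lemma restricted to a common linkage class would not suffice; what is actually true (and needed) is that $F_y\cdot\mu$ takes one common value over \emph{all} nonterminal complexes of the network. Your sketch of why it holds also misdescribes the mechanism: a linkage class may contain several terminal strong linkage classes (nothing forces $t=l$ here), and the relevant object is not an ``affine subspace of $\ker(YI_a)$'' (a subspace of $\mathbb{R}^r$) but the $\delta$-dimensional space $\ker Y\cap\operatorname{im}I_a\subseteq\mathbb{R}^n$.

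Concretely, the missing argument runs as follows. Set $\eta_R:=k_R\,(c^{*})^{F_{y_R}}>0$ and $T_y:=e^{F_y\cdot\mu}$ (well defined on nonterminal complexes, since any non-reactant complex is terminal), and let $A_\eta$ be the Laplacian with weights $\eta$. The two equilibrium conditions become $YA_\eta\mathbf{1}=0$ and $YA_\eta T=0$, so both $A_\eta\mathbf{1}$ and $A_\eta T$ lie in $\ker Y\cap\operatorname{im}I_a$, whose dimension is $\delta=1$. Since $\ker A_\eta$ is spanned by $t$ vectors supported on the terminal strong linkage classes, and both $\mathbf{1}$ and $T$ have full support, neither $A_\eta\mathbf{1}$ nor $A_\eta T$ can vanish once a nonterminal complex exists; hence $A_\eta T=\lambda\,A_\eta\mathbf{1}$ for some $\lambda$, so $T-\lambda\mathbf{1}\in\ker A_\eta$ vanishes at every nonterminal complex, i.e.\ $e^{F_y\cdot\mu}=\lambda$ for every nonterminal $y$, across linkage classes. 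This is the ``dimension count'' you allude to; without it spelled out, your proof does not close, and as written the restriction to a single linkage class would leave the stated theorem unproved.
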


Fortun and Mendoza \cite{FM2020} further investigated ACR in power-law kinetic systems and derived novel results that guarantee ACR for some classes of PLK systems. For these PLK systems, the key property for ACR in a species $X$ is the presence of an SF-reaction pair, which is defined as follows.

\begin{definition}
	A pair of reactions in a PLK system is called a {\bf Shinar-Feinberg pair} (or {\bf SF-pair}) in a species $X$ if their kinetic order vectors differ only in $X$. A subnetwork of the PLK system is of {\bf SF-type} if it contains an SF-pair in $X$.
\end{definition}

\subsection{The Shinar-Feinberg ACR Theorem for PL-equilibrated HT-RDK systems}
A basic observation about absolute concentration robustness is expressed in the following Lemma:

\begin{lemma}
	If $K$ and $K'$ are kinetics on a chemical reaction network $\mathscr{N}$ and $$E_+(\mathscr{N}, K)=E_+(\mathscr{N}, K'),$$ then $K$ has ACR in a species $X$ if and only if $K'$ has ACR.
\end{lemma}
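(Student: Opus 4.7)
The plan is to observe that the notion of ACR in a species $X$ is entirely a property of the set $E_+(\mathscr{N},K)$: by definition, the kinetic system $(\mathscr{N},K)$ has ACR in $X$ exactly when $E_+(\mathscr{N},K)$ is nonempty and every $c, c' \in E_+(\mathscr{N},K)$ satisfy $c_X = c'_X$. Since this condition refers to no data from $K$ other than its positive equilibria set, equality of the two equilibria sets should propagate the property in both directions.

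First I would restate the definition explicitly so the symmetry is visible: $(\mathscr{N},K)$ has ACR in $X$ iff there exists $c^* \in E_+(\mathscr{N},K)$ and for every $c^{**} \in E_+(\mathscr{N},K)$ one has $c^{**}_X = c^*_X$. Next, assume $K$ has ACR in $X$, pick a witness $c^* \in E_+(\mathscr{N},K) = E_+(\mathscr{N},K')$, and let $c^{**} \in E_+(\mathscr{N},K')$ be arbitrary; by the assumed set equality, $c^{**} \in E_+(\mathscr{N},K)$ as well, so the ACR property of $K$ yields $c^{**}_X = c^*_X$. Hence $K'$ has ACR in $X$. The reverse implication is verbatim the same argument with the roles of $K$ and $K'$ swapped.

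There is essentially no obstacle here: the lemma is a tautological consequence of the definition of ACR, and the only content is noticing that the definition is expressible purely in terms of $E_+$. I would therefore keep the write-up very short (three or four lines) and use it mainly to motivate its use in combination with Theorem \ref{theorem:equilibriasets}, which supplies exactly the hypothesis $E_+(\mathscr{N},K) = E_+(\mathscr{N},K_{\text{PY}})$ needed to transfer ACR between a Hill-type system and its associated poly-PL system.
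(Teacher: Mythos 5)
Your proof is correct and matches the paper's intent: the paper states this lemma as a ``basic observation'' without any written proof, precisely because, as you note, ACR in $X$ is by definition a property of the set $E_+(\mathscr{N},K)$ alone, so equality of equilibria sets transfers it immediately. Your definitional unpacking is exactly the argument the authors leave implicit, and your closing remark about combining the lemma with Theorem \ref{theorem:equilibriasets} is precisely how the paper uses it.
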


This observation enables us to use the associated poly-PL system to transfer ACR concepts and results to HTK systems. We begin with the key concept of an SF-pair:

\begin{definition}
	A pair of reactions $R$ and $R'$ in an HTK system $(\mathscr{N}, K)$ is an SF-pair if its associated PYK system has the following property:
	if there is at least one $K_j$ such that the rows differ only in $X$.
	%\begin{itemize}
	%\item[i.] the rows for $R$ and $R'$ of the kinetic order matrix of $K_j$ differ at most in $X$ for all $j$,
	%\item[ii.] there is at least one $K_j$ where the rows differ in X, and
	%\item[iii.] ${A_{R,j}} = {A_{R',j}}$ for all $j \in \{ 1,...,h \}$.
	%\end{itemize}
\end{definition}

\begin{figure}
	\begin{center}
		\includegraphics[width=10cm,height=3cm,keepaspectratio]{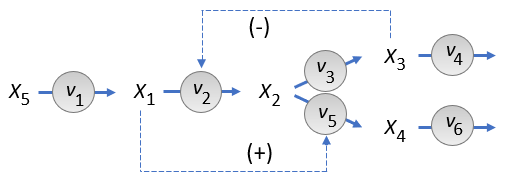}
		\caption{Biochemical map of a metabolic network with one positive feedforward and a negative feedback \cite{SHVA2007}}
		\label{sorribas:metabolic}
	\end{center}
\end{figure}

\begin{example}
	We now consider a particular biological system, illustrated in Figure \ref{sorribas:metabolic}, which is a metabolic network with one positive feedforward and a negative feedback obtained from the published work of Sorribas et al. \cite{SHVA2007}. The following CRN corresponds to the metabolic network with $X_5$ as independent variable.
	
	\[ \begin{array}{lll}
		R_1: 0 \to X_1 & \ \ \ &  R_4: X_1+X_2  \to X_1+X_4\\
		R_2: X_1+X_3  \to  X_3+X_2  &  \ \ \ & R_5: X_3 \to 0 \\
		R_3: X_2  \to X_3 &  \ \ \ & R_6: X_4 \to 0\\
	\end{array}\]
	In addition, the following are the kinetic order and dissociation constant matrices:
	\[F = \left[ {\begin{array}{*{20}{c}}
			0&0&0&0\\
			{{n_{21}}}&0&{{n_{23}}}&0\\
			0&{{n_{32}}}&0&0\\
			{{n_{41}}}&{{n_{42}}}&0&0\\
			0&0&{{n_{53}}}&0\\
			0&0&0&{{n_{64}}}
	\end{array}} \right] \ \ \ D = \left[ {\begin{array}{*{20}{c}}
			0&0&0&0\\
			{{k_{21}}}&0&{{k_{23}}}&0\\
			0&{{k_{32}}}&0&0\\
			{{k_{41}}}&{{k_{42}}}&0&0\\
			0&0&{{k_{53}}}&0\\
			0&0&0&{{k_{64}}}
	\end{array}} \right]\]
	with $n_{21} = 1$, $n_{23} = -0.8429$, $n_{32} = 1$, $n_{41} = 2.9460$, $n_{42} = 3$, $n_{53} = 1$, $n_{64} = 1$, $k_{21} = 0.6705$, $k_{41} = 0.8581$, $k_{42} = 44.7121$, $k_{53} = 1$, and $k_{64} = 1$.
	Hence, the corresponding ODEs are given by the following.
	\begin{align*}
		\frac{{d{X_1}}}{{dt}} &= {V_1} - \frac{{{V_2}X_1^{{n_{21}}}X_3^{{n_{23}}}}}{{\left( {{k_{12}} + X_1^{{n_{21}}}} \right)\left( {{k_{23}} + X_3^{{n_{23}}}} \right)}}\\
		\frac{{d{X_2}}}{{dt}} &= \frac{{{V_2}X_1^{{n_{21}}}X_3^{{n_{23}}}}}{{\left( {{k_{12}} + X_1^{{n_{21}}}} \right)\left( {{k_{23}} + X_3^{{n_{23}}}} \right)}} - \frac{{{V_3}X_2^{{n_{32}}}}}{{{k_{32}} + X_2^{{n_{32}}}}} - \frac{{{V_4}X_1^{{n_{41}}}X_2^{{n_{42}}}}}{{\left( {{k_{41}} + X_1^{{n_{41}}}} \right)\left( {{k_{42}} + X_2^{{n_{42}}}} \right)}}\\
		\frac{{d{X_3}}}{{dt}} &= \frac{{{V_3}X_2^{{n_{32}}}}}{{{k_{32}} + X_2^{{n_{32}}}}} - \frac{{{V_5}X_3^{{n_{53}}}}}{{{k_{53}} + X_3^{{n_{53}}}}}\\
		\frac{{d{X_4}}}{{dt}} &= \frac{{{V_4}X_1^{{n_{41}}}X_2^{{n_{42}}}}}{{\left( {{k_{41}} + X_1^{{n_{41}}}} \right)\left( {{k_{42}} + X_2^{{n_{42}}}} \right)}} - \frac{{{V_6}X_4^{{n_{64}}}}}{{{k_{64}} + X_4^{{n_{64}}}}}
	\end{align*}
	One can check that $\{R_1,R_3\}$ in $\left(\mathscr{N},K_\text{PY}\right)$ is an SF-pair in species $X_2$, and hence, in the original system $\left(\mathscr{N},K_\text{PY}\right)$ too.
\end{example}

The following Definition identifies a subset of HTK systems with interesting ACR properties:

\begin{definition}
	An HTK system $(\mathscr{N}, K)$ is {\bf PL-equilibrated} if 
	$E_+(\mathscr{N}, K) = \cap E_+(\mathscr{N}, K_{\text{PY},j})$. The sets of PL-equilibrated HTK and HT-RDK systems are denoted by HTK$_{\text {PLE}}$ and HT-RDK$_{\text {PLE}}$ respectively.
\end{definition}

The foundation for our ACR theory is the following Theorem:

\begin{theorem}
	(Shinar-Feinberg ACR Theorem for HT-RDK$_{\text {PLE}}$). Let $(\mathscr{N}, K)$ be a deficiency one PL-equilibrated HT-RDK system. If $E_+(\mathscr{N}, K) \ne \varnothing$ and an SF-pair in a species $X$ exists, then $(\mathscr{N}, K)$ has ACR in $X$.
	\label{shinar:feinberg:theorem:htdrk}
\end{theorem}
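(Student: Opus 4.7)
The plan is to reduce the claim to the already-known Shinar-Feinberg ACR Theorem for PL-RDK systems (Theorem \ref{SFTACR}) by passing through the associated poly-PL kinetics $K_{\text{PY}}$ and exploiting the PL-equilibrated hypothesis. By Proposition \ref{prop:KPY}, $K_{\text{PY}} \in$ PY-RDK, and its canonical PL-representation $K_{\text{PY}} = K_{\text{PY},1} + \cdots + K_{\text{PY},h}$ splits into components $(\mathscr{N}, K_{\text{PY},j})$ which, by Proposition \ref{prop:complex:factorizable}, are each PL-RDK systems on the same network $\mathscr{N}$. Since deficiency depends only on the network, each of these components inherits the deficiency-one assumption.

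Next, I would use the SF-pair hypothesis to single out one of these components. By the HTK-level definition, there exists an index $j^{*} \in \{1,\ldots,h\}$ such that the two reactions in the pair have rows in the kinetic order matrix $F_{j^{*}}$ differing only in column $X$, which is exactly an SF-pair in $X$ for the PL-RDK system $(\mathscr{N}, K_{\text{PY},j^{*}})$. Since $K_{\text{PY},j^{*}}$ is reactant-determined, the two reactions have distinct reactants whose kinetic-order vectors differ only in $X$. The PL-equilibrated hypothesis gives
\[
E_+(\mathscr{N}, K) = \bigcap_{j=1}^{h} E_+(\mathscr{N}, K_{\text{PY},j}),
\]
so $E_+(\mathscr{N}, K) \neq \varnothing$ forces $E_+(\mathscr{N}, K_{\text{PY},j^{*}}) \neq \varnothing$ as well.

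With the hypotheses of Theorem \ref{SFTACR} now in place for $(\mathscr{N}, K_{\text{PY},j^{*}})$, that theorem yields ACR in $X$ for $(\mathscr{N}, K_{\text{PY},j^{*}})$, meaning that every positive equilibrium of $K_{\text{PY},j^{*}}$ shares a common value at species $X$. Because $E_+(\mathscr{N}, K) \subseteq E_+(\mathscr{N}, K_{\text{PY},j^{*}})$ by PL-equilibratedness, every positive equilibrium of $(\mathscr{N}, K)$ also carries that same $X$-coordinate, which is exactly ACR in $X$ for the original Hill-type system. A cleaner way to package the last step is to note the basic observation that if two kinetics have the same positive equilibrium set then one has ACR in $X$ if and only if the other does, and use it (in subset form) on $K$ versus $K_{\text{PY},j^{*}}$.

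I expect the main obstacle to be the nonterminality hypothesis required by Theorem \ref{SFTACR}: the SF-pair definition given for HTK systems only tracks kinetic-order rows of some $F_j$ and does not explicitly record where the reactant complexes sit in the digraph of $\mathscr{N}$. One must either read nonterminality into the SF-pair hypothesis (mirroring the PLK treatment) or derive it from the combination of deficiency one and nonempty $E_+$, which restricts where such kinetic-order coincidences can be supported. Once that point is settled, the remainder of the argument is the three-step transfer through the associated poly-PL system sketched above, and is essentially bookkeeping.
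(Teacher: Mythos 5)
Your proof follows essentially the same route as the paper's: pass to the canonical PL-representation of $K_{\text{PY}}$, note each component $(\mathscr{N}, K_{\text{PY},j})$ is a deficiency one PL-RDK system, locate the component $K_{\text{PY},j^*}$ carrying the SF-pair, apply Theorem \ref{SFTACR} to it, and transfer ACR back to $(\mathscr{N},K)$ via the inclusion $E_+(\mathscr{N},K) \subseteq E_+(\mathscr{N},K_{\text{PY},j^*})$ supplied by PL-equilibratedness. The nonterminality caveat you flag is a fair observation, but the paper's own one-line invocation of Theorem \ref{SFTACR} leaves it equally unaddressed, so your argument matches theirs in both substance and level of rigor.
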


\begin{proof}
	Let $K_{\text {PY}} = K_{\text {PY}, 1} + ... + K_{\text {PY}, h}$  be the canonical PL-representation of the associated PY-RDK system.
	Note that $(\mathscr{N}, K_{\text {PY}, j})$ is a deficiency one PL-RDK with $E_+(\mathscr{N}, K_{\text {PY}, j}) \ne \varnothing$.
	By definition of an SF-pair, there is at least one $j^*$ with an SF-pair in $X$. It follows from Theorem \ref{SFTACR} that $(\mathscr{N}, K_{\text {PY}, j^*})$ has ACR in $X$. Since $E_+(\mathscr{N}, K) \subseteq E_+(\mathscr{N}, K_{\text {PY}, j^*})$, the system $(\mathscr{N}, K)$ has ACR in $X$. 
\end{proof}

\subsection{ACR in deficiency zero Hill-type kinetic systems}
\label{ACR:def:zero:hill:type}
We begin this section again with another basic observation:
\begin{lemma}
	If two kinetic systems $(\mathscr{N}, K)$ and $(\mathscr{N}', K')$ with the same species set are dynamically equivalent, then $K$ has ACR in $X$ if and only if $K'$ has ACR in $X$.
\end{lemma}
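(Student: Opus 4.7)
The plan is to reduce this lemma directly to the previous one about kinetics sharing a positive equilibrium set. The key observation is that dynamic equivalence of $(\mathscr{N}, K)$ and $(\mathscr{N}', K')$ means precisely that their species formation rate functions agree on the common domain, i.e., $NK(x) = N'K'(x)$ for every $x \in \mathbb{R}_{>0}^{\mathscr{S}}$. This forces the two ODE systems $dx/dt = f(x)$ and $dx/dt = f'(x)$ to have identical right-hand sides on $\mathbb{R}_{>0}^{\mathscr{S}}$.

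First I would note that, as a consequence, the sets of positive equilibria coincide:
\[
E_+(\mathscr{N}, K) = \{x \in \mathbb{R}_{>0}^m : f(x) = 0\} = \{x \in \mathbb{R}_{>0}^m : f'(x) = 0\} = E_+(\mathscr{N}', K').
\]
Since the species set $\mathscr{S}$ is the same for both systems, the coordinate $c_X$ of an equilibrium $c$ is well-defined in either system and takes the same value. Then I would invoke the preceding lemma (applied to these equal equilibria sets, after noting that its statement and proof only use the equality $E_+(\mathscr{N}, K) = E_+(\mathscr{N}', K')$, not the underlying network): $K$ has ACR in $X$ precisely when there exists $c^* \in E_+(\mathscr{N}, K)$ such that $c_X = c^*_X$ for every $c \in E_+(\mathscr{N}, K)$, and by the equality above this condition is identical to the ACR condition for $K'$ in $X$.

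There is essentially no obstacle here, since the lemma is a direct corollary of the definition of dynamic equivalence together with the fact that ACR is a property purely of the positive equilibrium set together with the distinguished species. The only small point to handle carefully is the phrase \emph{same species set}: this is what guarantees that $X$ is a species in both networks and that the $X$-coordinate of a common equilibrium vector is unambiguous. With that noted, the proof is a two-line verification and no computation is required.
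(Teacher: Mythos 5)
Your proof is correct and follows essentially the same route as the paper, which likewise notes that dynamic equivalence forces $E_+(\mathscr{N},K)=E_+(\mathscr{N}',K')$ and then reduces to the preceding lemma; your extra remark that the preceding lemma's argument depends only on the equality of equilibria sets (not on the networks coinciding) is a worthwhile clarification but not a different approach.
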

The claim follows from the fact that dynamic equivalence implies that the equilibria sets coincide.

In the following, we derive a general result about CF kinetics and a subset of NF kinetics and formulate the result for HTK as a Corollary. We recall the concept of CF interaction set from \cite{NEML2019}:

For a reactant complex $y$ of a network $\mathscr{N}$, $\mathscr{R}(y)$ denotes its set of (branching) reactions, i.e., $\rho ^{-1}(y)$ where $\rho:\mathscr{R} \to \mathscr{C}$ is the reactant map. The $n_r$ reaction sets $\mathscr{R}(y)$ of reactant complexes partition the set of reactions $\mathscr{R}$ and hence induce a decomposition of $\mathscr{N}$.

\begin{definition}
	Two reactions $r,r' \in \mathscr{R}(y)$ are {\bf{CF-equivalent}} for $K$ is their interaction functions coincide, i.e., ${I_{K,r}} = {I_{K,r'}}$ or, equivalently, if their kinetic functions $K_r$ and $K_{r'}$ are proportional (by a positive constant). The equivalence classes are the {\bf{CF-subsets}} (for $K$) of the reactant complex $y$.
\end{definition}

Let $N_R(y)$ be the number of CF-subsets of $y$. Then $1 \le {N_R}\left( y \right) \le \left| {{\rho ^{ - 1}}\left( y \right)} \right|$.

\begin{definition}
	The reactant complex is a {\bf{CF-node}} if ${N_R}\left( y \right)=1$, and an {\bf{NF-node}} otherwise. It is a {\bf{maximally NF-node}} if $ {N_R}\left( y \right) = \left| {{\rho ^{ - 1}}\left( y \right)} \right| >1$.
\end{definition}

\begin{definition}
	The number $N_R$ of CF-subsets of a CRN is the sum of ${N_R}\left( y \right)$ over all reactant complexes.
\end{definition}

Clearly, ${N_R} \ge {n_r}$ and the kinetics $K$ is CF if and only if ${N_R} = {n_r}$, or equivalently all reactant complexes are CF-nodes for $K$.

For our considerations, the following Definition is key:
\begin{definition}
	An NF kinetic system is {\bf{minimally NFK}} if it contains a single NF node $y$ and for that node, $N_R(y) = 2$ and at least one CF-subset has only one element. 
\end{definition}

The CF-RM$_+$ method, an algorithm introduced in \cite{NEML2019}, transforms an NFK system to a dynamically equivalent CFK. The procedure is as follows: at each NDK node, except for a CF-subset with a maximal number of reactions, the reactions in a CF-subset are replaced by adding the same reactant multiple to a reactant and product complexes, such that the new reactants and products do not coincide with any existing complexes. Suppose $(\mathscr{N}, K)$ is an NFK system that is transformed into a CFK system $(\mathscr{N}^*, K^*)$ via the CF-RM$_+$ method. The two key properties of $\mathscr{N}$ and $\mathscr{N}^*$ are the invariance of the stoichiometric subspaces, i.e., $S=S^*$,
and the kinetic functions, with only some of their indices, i.e., the transformed reactions, being changed. In particular, since the kinetic functions remains the same, $(\mathscr{N}, K)$ and $(\mathscr{N}^*, K^*)$ belong to the same class of kinetic systems. For instance, if $(\mathscr{N}, K)$ is an HTK system, then $(\mathscr{N}^*, K^*)$ is also an HTK system. 
%The details of the algorithm can be found in \cite{NEML2019}.

\begin{theorem}
	Let $\mathscr{M}$ be a set of RID kinetics for which the Shinar-Feinberg ACR Theorem holds and $K \in \mathscr{M}$. If $(\mathscr{N}, K)$ is a deficiency zero CFK or a minimally NFK system with a positive equilibrium and an SF-pair in $X$, then $(\mathscr{N}, K)$ has ACR in $X$.
\end{theorem}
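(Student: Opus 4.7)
The plan is to reduce both alternatives in the hypothesis to the Shinar-Feinberg ACR Theorem for $\mathscr{M}$, using the dynamic equivalence lemma stated at the beginning of this subsection as the bridge from transformed systems back to $(\mathscr{N}, K)$.

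For the deficiency zero CFK case, since the SF ACR Theorem for $\mathscr{M}$ is a deficiency one statement, my idea is to embed $(\mathscr{N}, K)$ into an auxiliary deficiency one system $(\widetilde{\mathscr{N}}, \widetilde{K})$ in $\mathscr{M}$ that preserves the SF-pair, the positive equilibrium, and the candidate species $X$. A natural candidate is to adjoin a single reaction whose reaction vector lies in the stoichiometric subspace $S$ and whose two complexes are fresh, thereby introducing a new linkage class and raising the deficiency by exactly one (so $\Delta n = 2$, $\Delta l = 1$, $\Delta s = 0$). The rate function on this new reaction must be engineered so that $\widetilde{K} \in \mathscr{M}$, vanishes on $E_+(\mathscr{N}, K)$, and does not pull the SF-pair into a terminal strong linkage class of $\widetilde{\mathscr{N}}$. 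If such an augmentation can be arranged, applying the SF ACR Theorem for $\mathscr{M}$ to $(\widetilde{\mathscr{N}}, \widetilde{K})$ delivers ACR in $X$, and because the positive equilibria sets coincide by construction, ACR descends to $(\mathscr{N}, K)$.

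For the minimally NFK case, the strategy is cleaner: apply the CF-RM$_+$ algorithm to $(\mathscr{N}, K)$ to produce a dynamically equivalent CFK system $(\mathscr{N}^*, K^*)$, which is still in $\mathscr{M}$ because CF-RM$_+$ only relabels reactions via a common translation of reactant and product complexes, leaving the kinetic functions themselves unchanged. At the unique NF-node $y$ with $N_R(y) = 2$, the singleton CF-subset (guaranteed to exist by the definition of minimally NFK) is shifted onto a brand-new reactant-product pair disjoint from all existing complexes, so that $\Delta n = 2$, $\Delta l = 1$, and $\Delta s = 0$, yielding $\delta(\mathscr{N}^*) = \delta(\mathscr{N}) + 1 = 1$. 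The SF-pair in $X$ survives the transformation because a translation by a constant vector does not alter kinetic orders. Invoking the SF ACR Theorem for $\mathscr{M}$ on the deficiency one CFK system $(\mathscr{N}^*, K^*)$ yields ACR in $X$ for $(\mathscr{N}^*, K^*)$, and the dynamic equivalence lemma transfers it to $(\mathscr{N}, K)$.

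The main obstacle is the CFK case: constructing a deficiency raising augmentation that simultaneously preserves $E_+$, keeps the kinetics in $\mathscr{M}$, and leaves the SF-pair nonterminal is delicate for a general RID class $\mathscr{M}$, since it must be done without knowing the specific kinetic functional form beyond $\mathscr{M}$. A cleaner alternative, which I would pursue if the class $\mathscr{M}$ supports it, is to establish or cite a deficiency zero analog of the SF ACR Theorem for $\mathscr{M}$ (of the sort known for PL-RDK systems); this would make Case CFK immediate and collapse the augmentation step. In either route the theorem reduces to an SF ACR Theorem for $\mathscr{M}$, while the minimally NFK case reduces further by straightforward bookkeeping on $n$, $l$, and $s$ through the CF-RM$_+$ transformation.
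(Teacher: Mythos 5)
Your treatment of the minimally NFK case is essentially the paper's argument: apply CF-RM$_+$ at the NF-node, translate the singleton CF-subset onto fresh complexes, do the count $\Delta n = 2$, $\Delta \ell = 1$, $\Delta s = 0$ to reach deficiency one, invoke the SF ACR Theorem for $\mathscr{M}$, and descend via dynamic equivalence. One step you assert rather than justify is $\Delta \ell = 1$: removing a reaction from its original linkage class could in principle split that class or orphan a complex, which would spoil the count. The paper closes this by first observing that a deficiency zero system with a positive equilibrium is complex balanced (Feinberg/Horn), hence weakly reversible, so every reaction lies in a cycle and its removal leaves the original linkage class connected; the only new linkage class is the translated one. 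You should add that observation.

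The genuine gap is in the CFK case. Your proposed augmentation --- adjoining a brand-new reaction with fresh complexes and a rate function ``engineered so that $\widetilde{K} \in \mathscr{M}$ and vanishes on $E_+(\mathscr{N},K)$'' --- cannot be carried out: by the positivity property in the definition of a kinetics, a rate function is strictly positive on the positive orthant whenever the reactant's support condition is met, so it cannot vanish on positive equilibria; and a new reaction with positive rate and nonzero reaction vector will in general perturb $E_+$. You correctly flag this as the main obstacle, but the resolution is not a deficiency zero analog of the SF theorem; it is simply to reuse the translation trick from the NFK case. The paper applies CF-RM$_+$ to \emph{any existing} reaction: replacing $y \to y'$ by $y + ay \to y' + ay$ with fresh complexes changes neither the reaction vector nor the rate function, so the transform is dynamically equivalent and stays in $\mathscr{M}$; weak reversibility again gives $n^* = n+2$, $\ell^* = \ell + 1$, $s^* = s$, hence $\delta^* = 1$, and the SF ACR Theorem for $\mathscr{M}$ applies exactly as in the NFK case. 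So the missing idea is that the deficiency-raising device is a translation of an existing reaction, not the addition of a new one.
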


\begin{proof}
	After the classical results of Feinberg and Horn, any positive equilibrium in a deficiency zero system is complex balanced \cite{feinberg:complex:balancing}. Thus, the underlying CRN is weakly reversible \cite{Horn:ns}.
	
	For the NFK case, let $y \to y'$ be a single reaction with the hypothesized CF-subset of the minimal NFK node. Applying the CF-RM$_+$ method to transform $(\mathscr{N}, K)$, we obtain a transform of $\mathscr{N}$, which is $\mathscr{N}^*$ with $\mathscr{S}^*=\mathscr{S}$, $\mathscr{C}^*=\mathscr{C} \cup \{y+ay,y'+ay\}$ where $a$ is an appropriate integral multiple of $y$ and $\mathscr{R}^*=\mathscr{R} \cup \{y+ay \to y'+ay\}$. Since we assume that the network has a complex balanced equilibrium, then by a classical result of Horn \cite{Horn:ns}, it is weakly reversible. Hence, each linkage class is weakly reversible. Since each reaction in the linkage class of the NFK node is in a cycle, the CR-RM$_+$ creates only one additional linkage class, namely $\{y+ay \to y'+ay\}$. Thus, the deficiency of $\mathscr{N}^*$ is $\delta ^* = (n+2)-(\ell +1) -s = \delta + 1 = 1$. The kinetic order matrix remains the same, so the transform $\mathscr{N}^*$ is still of SF-type in $X$. $\mathscr{N}^*$, as a dynamically equivalent CFK system, has a positive equilibrium. Hence, it fulfills the assumptions of the extension of the Shinar-Feinberg ACR Theorem for $\mathscr{M}$. Therefore, $\mathscr{N}^*$ has ACR in $X$.
	
	For the CFK case, we can apply the CR-RM$_+$ method to any reaction and also obtain an appropriate dynamically equivalent deficiency one system as in minimally NFK case.
\end{proof}

\begin{corollary}
	Let $K \in \rm{HTK_{PLE}}$. If $(\mathscr{N}, K)$ is a deficiency zero HT-RDK or a minimally NDK HTK system with a positive equilibrium and an SF-pair in $X$, then $(\mathscr{N}, K)$ has ACR in $X$.
\end{corollary}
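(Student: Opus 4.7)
The plan is to recognize the corollary as a direct instance of the preceding theorem applied to the kinetics class $\mathscr{M} := \mathrm{HTK}_{\mathrm{PLE}}$. So the first thing I would check is that $\mathrm{HTK}_{\mathrm{PLE}}$ qualifies as a set of RID kinetics for which the Shinar-Feinberg ACR Theorem holds. Both requirements follow from what has already been established: every Hill-type kinetics is an RID kinetics (the rate constant $k_q$ factors out and the rest is the interaction map $M_q/T_q$), so $\mathrm{HTK}_{\mathrm{PLE}} \subseteq$ RID; and the Shinar-Feinberg ACR Theorem for PL-equilibrated HT-RDK systems (Theorem \ref{shinar:feinberg:theorem:htdrk}) is exactly the version required by the hypothesis of the meta-theorem, once we restrict to the HT-RDK $=$ HTK $\cap$ CFK subset.

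Next I would match up the two cases in the conclusion with the two cases in the general theorem. In the deficiency zero HT-RDK case, by definition HT-RDK $=$ HTK $\cap$ CFK, so $(\mathscr{N},K)$ is a deficiency zero CFK system. Together with the hypothesis of a positive equilibrium and an SF-pair in $X$, this is precisely the first bullet of the preceding theorem. In the minimally NDK HTK case, $(\mathscr{N},K)$ is an NFK system with a single NF-node $y$ satisfying $N_R(y)=2$ and at least one singleton CF-subset, which is the definition of minimally NFK. Again the hypotheses on the positive equilibrium and SF-pair are inherited. In both cases we may therefore invoke the theorem with $\mathscr{M}=\mathrm{HTK}_{\mathrm{PLE}}$ and conclude ACR in $X$.

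The only subtle point — and essentially the one step that needs a sentence of justification rather than a citation — is that after applying the $\mathrm{CF\text{-}RM}_+$ transformation in the minimally NDK subcase, the transform $(\mathscr{N}^*,K^*)$ still lies in $\mathrm{HTK}_{\mathrm{PLE}}$, so that Theorem \ref{shinar:feinberg:theorem:htdrk} is applicable to it. I would argue this by noting that $\mathrm{CF\text{-}RM}_+$ only relabels the reactions of a CF-subset to have new, disjoint reactant/product complexes while preserving the kinetic functions themselves; hence $K^*$ is again Hill-type, the associated poly-PL system $K^*_{\mathrm{PY}}$ is obtained from $K_{\mathrm{PY}}$ by the same relabeling, and the equilibria sets of $(\mathscr{N},K)$, $(\mathscr{N}^*,K^*)$, and the intersections $\bigcap_j E_+(\mathscr{N}^*, K^*_{\mathrm{PY},j})$ are all the same as the corresponding ones for $(\mathscr{N},K)$. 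Thus the PL-equilibrated property is preserved and the SF-pair in $X$ is preserved as well.

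The main potential obstacle is exactly this last preservation point: one must verify that membership in $\mathrm{HTK}_{\mathrm{PLE}}$ (as opposed to membership in HTK, HT-RDK, or RID, which is clear) is invariant under the $\mathrm{CF\text{-}RM}_+$ transformation. Once this invariance is in hand, the corollary reduces to quoting the preceding theorem twice, once per case, and no further computation is needed.
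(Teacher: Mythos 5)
Your proposal is correct and follows the same route as the paper: the paper's own proof is a one-line appeal to the preceding theorem with $\mathscr{M} = \mathrm{HTK}_{\mathrm{PLE}}$, justified by the fact that the Shinar--Feinberg ACR Theorem for this class was established in the earlier section. Your additional care about verifying that membership in $\mathrm{HTK}_{\mathrm{PLE}}$ is preserved under the $\mathrm{CF\text{-}RM}_+$ transformation is a detail the paper leaves implicit (it really belongs to the proof of the preceding theorem), and your sketch of why it holds is sound.
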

\begin{proof}
	We showed in the previous section that the Shinar-Feinberg ACR Theorem holds for $\rm{HTK_{PLE}}$.
\end{proof}

\subsection{ACR in larger HTK systems}
Having established sufficient conditions for ACR in networks of low deficiency, i.e., $\delta = 0$ or 1, we are now in a position to use general results in \cite{FTJM2020} to turn such systems into ``ACR building blocks'' for larger and higher deficiency systems. We recall their result:
%(\cite{FTJM2020}, Theorem ):
\begin{theorem}
	Let $(\mathscr{N}, K)$ be an RIDK system with a positive equilibrium and an independent decomposition $\mathscr{N} =\mathscr{N}_1 \cup \mathscr{N}_2 \cup ... \cup \mathscr{N}_p$. Suppose there is an $\mathscr{N}_i$ with $(\mathscr{N}_i, K_i)$ of SF-type in $X \in \mathscr{S}$ such that
	\begin{itemize}
		\item[i.] $\delta _i =0$ with CF or minimally NF kinetics; or
		\item[ii.] $\delta _i =1$ with CF kinetics.
	\end{itemize}
	Then $(\mathscr{N}, K)$ has ACR in $X$.
\end{theorem}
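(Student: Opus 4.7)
The approach is to combine the Feinberg Decomposition Theorem with the low-deficiency ACR results established in the preceding subsections. The underlying intuition is that independence of the decomposition forces every positive equilibrium of $(\mathscr{N}, K)$ to equilibrate each subnetwork individually, so ACR in one distinguished subnetwork automatically propagates to the ambient network.

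First I would invoke Theorem \ref{feinberg:decom:thm}. Since $\mathscr{N} = \mathscr{N}_1 \cup \cdots \cup \mathscr{N}_p$ is independent, we obtain the exact equality
\[
E_+(\mathscr{N}, K) \;=\; E_+(\mathscr{N}_1, K_1) \cap \cdots \cap E_+(\mathscr{N}_p, K_p).
\]
The hypothesis that $(\mathscr{N}, K)$ admits a positive equilibrium implies this intersection is nonempty, and in particular $E_+(\mathscr{N}_i, K_i) \neq \varnothing$. Moreover, the restriction $K_i$ inherits the RID structure from $K$, and the hypothesized SF-pair in $X$ sits inside $\mathscr{N}_i$ by assumption, so $(\mathscr{N}_i, K_i)$ is again of SF-type in $X$.

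Next I would apply the appropriate low-deficiency ACR result to the singled-out subnetwork. In case (i), with $\delta_i = 0$ and CF or minimally NF kinetics, the deficiency-zero theorem of Section \ref{ACR:def:zero:hill:type} (applied to the RID subclass $\mathscr{M}$ for which the Shinar-Feinberg ACR Theorem is already known to hold and to which $K_i$ belongs) yields ACR in $X$ for $(\mathscr{N}_i, K_i)$. In case (ii), with $\delta_i = 1$ and CF kinetics, Theorem \ref{shinar:feinberg:theorem:htdrk} applies directly to $(\mathscr{N}_i, K_i)$, again producing ACR in $X$ for the subnetwork.

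Finally I would lift subnetwork ACR to the full system. For any two positive equilibria $c^*, c^{**} \in E_+(\mathscr{N}, K)$, the decomposition identity places both in $E_+(\mathscr{N}_i, K_i)$; since $(\mathscr{N}_i, K_i)$ has ACR in $X$, we conclude $c^*_X = c^{**}_X$, which is precisely ACR in $X$ for $(\mathscr{N}, K)$. The only delicate point is checking that the SF-type property and the CF or minimally NF structure are genuinely inherited by $K_i$ rather than being features of $K$ alone; this follows because $K_i$ is defined as the restriction of $K$ to $\mathscr{R}_i$, so kinetic order vectors, interaction functions, and CF-subsets of reactant complexes lying in $\mathscr{N}_i$ are unchanged. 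I do not expect any further obstacle, since all the nontrivial analytic work has already been done inside the low-deficiency theorems and in Theorem \ref{feinberg:decom:thm}.
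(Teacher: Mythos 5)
Your argument is correct and is exactly the intended one: the paper itself does not prove this theorem but recalls it from \cite{FTJM2020}, and the natural proof is precisely the chain you give --- independence yields $E_+(\mathscr{N},K)=\bigcap_j E_+(\mathscr{N}_j,K_j)\subseteq E_+(\mathscr{N}_i,K_i)$ via Theorem \ref{feinberg:decom:thm}, nonemptiness passes to the subnetwork, the low-deficiency Shinar--Feinberg results give ACR in $X$ for $(\mathscr{N}_i,K_i)$, and the inclusion transfers the constant $X$-value to all positive equilibria of $(\mathscr{N},K)$. The only caveat, which you already flag, is that the statement's bare ``RIDK'' hypothesis must be read as implicitly requiring that the restricted kinetics $K_i$ lie in a class for which the deficiency-zero and deficiency-one ACR theorems are available (as is the case for the $\mathrm{HTK_{PLE}}$ application made in the paper); with that reading there is no gap.
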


The preceding Theorem applies to any $\rm{HTK_{PLE}}$ system satisfying the assumptions about a positive equilibrium and an independent decomposition.

\subsection{A comparison with ACR in the associated PLK system}
Since the numerator of a Hill-type kinetics is in power-law form, one may try to relate ACR in both systems. We first give the numerator-induced system a formal name:

\begin{definition}
	The associated PLK system $(\mathscr{N}, K_{\text{PL}})$ of an HTK system $(\mathscr{N}, K)$ is given, for each reaction $q$, by $K_{\text{PL},q} (x) = k_qM_q(x)$.
\end{definition}
Recall that $M_q(x) := x^{F_q}$, where $F_q$ is the $q$-th row of the kinetic order matrix $F$.
%In the following statement, SF-pair on the LHS is according to Definition 18, while on the RHS, it is according to Definition 17,

\begin{table}
	\begin{tabular}{ c c }
		\toprule
		%\textbf{EKF} & \textbf{UKF} & ABC & \\
		%\midrule\\
		%\addlinespace[-2ex]
		\textbf{Example a}&
		$F = \left[ {\begin{array}{*{20}{c}}
				1&1\\
				0&0
		\end{array}} \right]$ 
		$D = \left[ {\begin{array}{*{20}{c}}
				1&0\\
				0&1
		\end{array}} \right]$ 
		$K\left( x \right) = \left[ {\begin{array}{*{20}{c}}
				{{k_1}\dfrac{{{x_1}{x_2}}}{{\left( {1 + {x_1}} \right){x_2}}}}\\
				{{k_2}/2}
		\end{array}} \right]$
		\\
		\addlinespace[1.5ex]
		& ${K_{\text{PY}}}\left( x \right) = \left[ {\begin{array}{*{20}{c}}
				{{k_1}\left( {{x_1}{x_2} + {x_1}{x_2}} \right)}\\
				{{k_2}\left( {{x_2} + {x_1}{x_2}} \right)}
		\end{array}} \right]$ 
		${F_1} = \left[ {\begin{array}{*{20}{c}}
				1&1\\
				0&1
		\end{array}} \right]$ 
		${F_2} = \left[ {\begin{array}{*{20}{c}}
				1&1\\
				1&1
		\end{array}} \right]$ \\
		\midrule
		\textbf{Example b}&
		$F = \left[ {\begin{array}{*{20}{c}}
				1&1\\
				0&0
		\end{array}} \right]$
		$D = \left[ {\begin{array}{*{20}{c}}
				1&0\\
				0&0
		\end{array}} \right]$
		$K\left( x \right) = \left[ {\begin{array}{*{20}{c}}
				2{{k_1}\dfrac{{{x_1}{x_2}}}{{\left( {1 + {x_1}} \right){x_2}}}}\\
				{{k_2}}
		\end{array}} \right]$\\
		&
		${K_{\text{PY}}}\left( x \right) = \left[ {\begin{array}{*{20}{c}}
				{{k_1}\left( {{x_1}{x_{_2}} + {x_1}{x_2}} \right)}\\
				{{k_2}\left( {{x_2} + {x_1}{x_2}} \right)}
		\end{array}} \right]$
		${F_1} = \left[ {\begin{array}{*{20}{c}}
				1&1\\
				0&1
		\end{array}} \right]$ ${F_2} = \left[ {\begin{array}{*{20}{c}}
				1&1\\
				1&1
		\end{array}} \right]$
		\\
		\midrule
		\textbf{Example c}&
		$F = \left[ {\begin{array}{*{20}{c}}
				1&0\\
				0&1
		\end{array}} \right]$ $D = \left[ {\begin{array}{*{20}{c}}
				0&0\\
				0&1
		\end{array}} \right]$
		$K\left( x \right) = \left[ {\begin{array}{*{20}{c}}
				{{k_1}\dfrac{{{x_1}}}{{{x_1}}}}\\
				2{{k_2}\dfrac{{{x_2}}}{{1 + {x_2}}}}
		\end{array}} \right]$\\
		& ${K_{\text{PY}}}\left( x \right) = \left[ {\begin{array}{*{20}{c}}
				{{k_1}\left( {{x_1} + {x_1}{x_2}} \right)}\\
				{{k_2}\left( {{x_1}{x_{_2}} + {x_1}{x_{_2}}} \right)}
		\end{array}} \right]$
		${F_1} = \left[ {\begin{array}{*{20}{c}}
				1&0\\
				1&1
		\end{array}} \right]$ ${F_2} = \left[ {\begin{array}{*{20}{c}}
				1&1\\
				1&1
		\end{array}} \right]$
		\\
		\midrule
		\textbf{Example d}&
		$F = \left[ {\begin{array}{*{20}{c}}
				1&1\\
				1&1
		\end{array}} \right]$ $D = \left[ {\begin{array}{*{20}{c}}
				1&0\\
				0&0
		\end{array}} \right]$
		$K\left( x \right) = \left[ {\begin{array}{*{20}{c}}
				2{{k_1}\dfrac{{{x_1}{x_2}}}{{\left( {1 + {x_1}} \right){x_2}}}}\\
				{{k_2}\dfrac{{{x_1}{x_2}}}{{{x_1}{x_2}}}}
		\end{array}} \right]$\\
		& 
		${K_{\text{PY}}}\left( x \right) = \left[ {\begin{array}{*{20}{c}}
				{{k_1}\left( {{x_1}^2{x_2}^2 + {x_1}^2{x_2}^2} \right)}\\
				{{k_2}\left( {{x_1}{x_2}^2 + {x_1}^2{x_2}^2} \right)}
		\end{array}} \right]$
		${F_1} = \left[ {\begin{array}{*{20}{c}}
				2&2\\
				1&2
		\end{array}} \right]$ ${F_2} = \left[ {\begin{array}{*{20}{c}}
				2&2\\
				2&2
		\end{array}} \right]$
		\\
		\bottomrule\\
	\end{tabular}
	\caption{Cases where systems $(\mathscr{N}, K)$ with given kinetics have an SF-pair in $X_i$ but their corresponding $(\mathscr{N}, K_{\text{PL}})$ have no SF-pair in $X_i$}
	\label{table:hilltype:plk1}
\end{table}

For Table \ref{table:hilltype:plk1}, we let $(\mathscr{N}, K)$ be an HTK system with given kinetic order matrix $F$ and dissociation constant matrix $D$.
The systems $(\mathscr{N}, K)$ with kinetics depicted in Examples a, b and d have an SF-pair in $X_1$ but $(\mathscr{N}, K_{\text{PL}})$ have no SF-pair in $X_1$. In addition, the system $(\mathscr{N}, K)$ with kinetics depicted in Example c has an SF-pair in $X_2$ but $(\mathscr{N}, K_{\text{PL}})$ has no SF-pair in $X_2$.

\begin{table}
	\begin{tabular}{ c c }
		\toprule
		%\textbf{EKF} & \textbf{UKF} & ABC & \\
		%\midrule\\
		%\addlinespace[-2ex]
		\textbf{Example e}&
		$F = \left[ {\begin{array}{*{20}{c}}
				1&1\\
				0&1
		\end{array}} \right]$ $D = \left[ {\begin{array}{*{20}{c}}
				1&0\\
				0&1
		\end{array}} \right]$
		$K\left( x \right) = \left[ {\begin{array}{*{20}{c}}
				{{k_1}\dfrac{{{x_1}{x_2}}}{{\left( {1 + {x_1}} \right){x_2}}}}\\
				{{k_2}\dfrac{{{x_2}}}{{1 + {x_2}}}}\\
		\end{array}} \right]$\\
		&
		${K_{\text{PY}}}\left( x \right) = \left[ {\begin{array}{*{20}{c}}
				{{k_1}\left( {{x_1}{x_2} + {x_1}{x_2}^2} \right)}\\
				{{k_2}\left( {{x_2}^2 + {x_1}{x_2}^2} \right)}
		\end{array}} \right]$
		${F_1} = \left[ {\begin{array}{*{20}{c}}
				1&1\\
				0&2
		\end{array}} \right],{F_2} = \left[ {\begin{array}{*{20}{c}}
				1&2\\
				1&2
		\end{array}} \right]$
		\\
		\midrule
		\textbf{Example f}&
		$F = \left[ {\begin{array}{*{20}{c}}
				1&1\\
				0&1
		\end{array}} \right]$ $D = \left[ {\begin{array}{*{20}{c}}
				1&1\\
				0&1
		\end{array}} \right]$
		$K\left( x \right) = \left[ {\begin{array}{*{20}{c}}
				{{k_1}\dfrac{{{x_1}{x_2}}}{{\left( {1 + {x_1}} \right)\left( {1 + {x_2}} \right)}}}\\
				{{k_2}\dfrac{{{x_2}}}{{1 + {x_2}}}}
		\end{array}} \right]$
		\\
		&
		${K_{\text{PY}}}\left( x \right) = \left[ {\begin{array}{*{20}{c}}
				{{k_1}\left( {{x_1}{x_2} + {x_1}{x_2}^2} \right)}\\
				{{k_2}\left( {{x_2} + {x_2}^2 + {x_1}{x_2} + {x_1}{x_2}^2} \right)}
		\end{array}} \right]$\\
		&
		${F_1} = \left[ {\begin{array}{*{20}{c}}
				1&1\\
				0&1
		\end{array}} \right]$ ${F_2} = \left[ {\begin{array}{*{20}{c}}
				1&2\\
				1&1
		\end{array}} \right]$ ${F_3} = \left[ {\begin{array}{*{20}{c}}
				1&2\\
				0&2
		\end{array}} \right]$ ${F_4} = \left[ {\begin{array}{*{20}{c}}
				1&2\\
				1&2
		\end{array}} \right]$\\
		\midrule
		\textbf{Example g}
		&
		$F = \left[ {\begin{array}{*{20}{c}}
				1&0\\
				1&1
		\end{array}} \right]$ $D = \left[ {\begin{array}{*{20}{c}}
				1&0\\
				0&1
		\end{array}} \right]$
		$K\left( x \right) = \left[ {\begin{array}{*{20}{c}}
				{{k_1}\dfrac{{{x_1}}}{{\left( {1 + {x_1}} \right)}}}\\
				{{k_2}\dfrac{{{x_1}{x_2}}}{{{x_1}\left( {1 + {x_2}} \right)}}}
		\end{array}} \right]$
		\\
		&
		${K_{\text{PY}}}\left( x \right) = \left[ {\begin{array}{*{20}{c}}
				{{k_1}\left( {{x_1}^2 + {x_1}^2{x_2}} \right)}\\
				{{k_2}\left( {{x_1}{x_2} + {x_1}^2{x_2}} \right)}
		\end{array}} \right]$
		${F_1} = \left[ {\begin{array}{*{20}{c}}
				2&0\\
				1&1
		\end{array}} \right]$ ${F_2} = \left[ {\begin{array}{*{20}{c}}
				2&1\\
				2&1
		\end{array}} \right]$
		\\
		\midrule
		\textbf{Example h}
		&
		$F = \left[ {\begin{array}{*{20}{c}}
				1&0\\
				1&1
		\end{array}} \right]$ $D = \left[ {\begin{array}{*{20}{c}}
				1&1\\
				0&1
		\end{array}} \right]$
		$K\left( x \right) = \left[ {\begin{array}{*{20}{c}}
				{{k_1}\dfrac{{{x_1}}}{{\left( {1 + {x_1}} \right)\left( 2 \right)}}}\\
				{{k_2}\dfrac{{{x_1}{x_2}}}{{{x_1}\left( {1 + {x_2}} \right)}}}
		\end{array}} \right]$
		\\
		&
		${K_{\text{PY}}}\left( x \right) = \left[ {\begin{array}{*{20}{c}}
				{{k_1}\left( {{x_1}^2 + {x_1}^2{x_2}} \right)}\\
				{2{k_2}\left( {{x_1}{x_2} + {x_1}^2{x_2}} \right)}
		\end{array}} \right]$
		${F_1} = \left[ {\begin{array}{*{20}{c}}
				2&0\\
				1&1
		\end{array}} \right]$ ${F_2} = \left[ {\begin{array}{*{20}{c}}
				2&1\\
				2&1
		\end{array}} \right]$
		\\
		\bottomrule\\
	\end{tabular}
	\caption{Cases where systems $(\mathscr{N}, K_{\text{PL}})$ with given kinetics have an SF-pair in $X_i$ but their corresponding $(\mathscr{N}, K)$ have no SF-pair in $X_i$}
	\label{table:hilltype:plk2}
\end{table}

We now refer to Table \ref{table:hilltype:plk2}. We again let $(\mathscr{N}, K)$ be an HTK system.
The systems $(\mathscr{N}, K_{\text{PL}})$ with kinetics depicted in Examples e and f have an SF-pair in $X_1$ but $(\mathscr{N}, K)$ have no SF-pair in $X_1$. Moreover, the systems $(\mathscr{N}, K_{\text{PL}})$ with kinetics depicted in Examples c and d have an SF-pair in $X_2$ but $(\mathscr{N}, K)$ have no SF-pair in $X_2$.

\section{Complex balanced equilibria of weakly reversible Hill-type kinetic systems}
\label{com:bal:eq:HTK}
In this section, we focus on complex balanced equilibria of weakly reversible HTK systems. We first extend concepts and partial results of M\"uller and Regensburger on generalized mass action systems to complex balanced HT-RDK systems.
The results of Talabis et al. \cite{TMMN2020} also hold for some HT-RDK systems.
This leads to the identification of the subset of PL-complex balanced systems for which the extended results are complete. We then present examples of PL-complex balanced systems. In analogy to the ACR theory for PL-equilibrated systems, we develop the theory of balanced concentration robustness (BCR) for PL-complex balanced systems. 

\subsection{Complex balancing of Hill-type kinetics}
We first recall the two kinds of complex balancing properties of an RID kinetics on a given network:
\begin{definition}
	A kinetics $K$ on a weakly reversible network $\mathscr{N}$ has conditional complex balancing (CCB) if there exist rate constants such that $(\mathscr{N}, K)$ is complex balanced, i.e., $Z_+(\mathscr{N}, K)\ne \varnothing$.  A kinetics is unconditionally complex balanced (UCB) on a weakly reversible network if for all rate constants, $Z_+(\mathscr{N}, K)\ne \varnothing$.
\end{definition}
We first observe that conditional complex balancing holds for any weakly reversible HT-RDK system:

\begin{theorem}
	For any $K \in \rm{HT}\text{-}\rm{RDK}(\mathscr{N})$ on a weakly reversible network $\mathscr{N}$, there are rate constants such that  $Z_+(\mathscr{N}, K)\ne \varnothing$, i.e., it is conditionally complex balancing.
\end{theorem}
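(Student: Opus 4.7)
The plan is to exploit the complex factorizability of $K$ (which follows from $\text{HT-RDK} \subseteq \text{CFK}$) to turn the complex balancing condition $I_a K(c) = 0$ into a linear equation on the network Laplacian, and then use weak reversibility of $\mathscr{N}$ to solve that linear equation positively in the rate constants. Since $K \in \text{HT-RDK}$, there is a factor map $\Psi_K$ with $K = I_k \circ \Psi_K$; for each reactant complex $y$ and any reaction $q$ with reactant $y$, $\Psi_K(c)_y = M_q(c)/T_q(c)$, the value being independent of the choice of $q$ by the CF property. For $c > 0$ these values are positive, and weak reversibility forces every complex to be both reactant and product, so $\Psi_K(c) \in \mathbb{R}^n_{>0}$. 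The complex balancing condition then reads $A_k \Psi_K(c) = 0$, where $A_k = I_a \circ I_k$ is the Laplacian.

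I would next fix an arbitrary positive $c \in \mathbb{R}^m_{>0}$, set $\psi^{*} := \Psi_K(c) \in \mathbb{R}^n_{>0}$, and search for positive $k \in \mathbb{R}^r_{>0}$ with $\psi^{*} \in \ker A_k$. The substitution $j_{y \to y'} := k_{y \to y'}\, \psi^{*}_y$ recasts $A_k \psi^{*} = 0$ as the vertex balance
\[
\sum_{y \to z} j_{y \to z} \;=\; \sum_{z \to y'} j_{z \to y'} \qquad \text{at every complex } z.
\]
Weak reversibility means each linkage class is strongly connected, so every reaction lies on at least one directed cycle inside its class. Summing unit circulations over a finite collection of directed cycles whose union covers every reaction produces a strictly positive circulation $j \in \mathbb{R}^r_{>0}$ satisfying the balance at every complex. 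Setting $k_{y \to y'} := j_{y \to y'}/\psi^{*}_y$ then yields positive rate constants with $I_a K(c) = A_k \psi^{*} = 0$, so $c \in Z_+(\mathscr{N}, K)$, which is the claim.

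The main obstacle is the positive realizability of an arbitrary positive $\psi^{*}$ as a kernel vector of $A_k$; the cycle-flow reduction above dispatches it cleanly, using only strong connectivity of each linkage class. The argument relies essentially on complex factorizability (to linearize $I_a K(c) = 0$) and on the freedom to choose the rate constants (to absorb the values $\psi^{*}_y$ via $k = j/\psi^{*}$). It is structurally parallel to the classical Horn result on complex balancing of weakly reversible mass-action systems, with the HT-RDK factor map $\Psi_K$ playing the role of the monomial map $c \mapsto c^Y$.
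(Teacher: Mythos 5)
Your proof is correct, but it takes a genuinely different route from the paper's. The paper disposes of this theorem in one line: it is cited as a corollary of Theorem 3 of \cite{FTJM2020}, which establishes conditional complex balancing for all of $\mathrm{CFK}(\mathscr{N})$ on weakly reversible networks, combined with the definition $\mathrm{HT\text{-}RDK} = \mathrm{HTK} \cap \mathrm{CFK}$. You instead give a self-contained argument: fix $c>0$, use the factor map to reduce $I_aK(c)=0$ to $A_k\Psi_K(c)=0$, substitute $j_q = k_q\,\psi^*_{y_q}$ to turn this into a vertex-balance condition, and produce a strictly positive circulation by summing unit flows over directed cycles covering every reaction (which exist by strong connectivity of each linkage class), finally recovering $k = j/\psi^*>0$. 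All the steps check out — in particular $\Psi_K(c)$ is well defined and positive at every complex because HT-RDK reactions sharing a reactant share the interaction $M_q/T_q$, and on a weakly reversible network every complex is a reactant complex. What your approach buys is transparency: it makes explicit that conditional complex balancing for complex factorizable kinetics is really a graph-theoretic statement about circulations on strongly connected digraphs plus the freedom to absorb $\psi^*$ into the rate constants, exactly parallel to Horn's classical argument for mass action. What the paper's approach buys is brevity and placement of the result inside the general CFK framework, where the same fact is proved once for all complex factorizable kinetics rather than re-derived for the Hill-type subclass.
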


This result is a Corollary of Theorem 3 in \cite{FTJM2020} where it is shown that conditional complex balancing holds for $\rm{CFK}(\mathscr{N})$ on weakly reversible networks and by definition, $ \rm{HT}\text{-}\rm{RDK}(\mathscr{N}) =  \rm{CFK}(\mathscr{N})  \cap \rm{HTK}(\mathscr{N})$ .

To obtain results on unconditional complex balancing for $K \in \rm{HT}\text{-}\rm{RDK}$, the associated poly-PL kinetics $K_{\rm{PY}}$ is a key tool. We already observed in Proposition \ref{prop:KPY} that $K \in {\rm{HT}{\text{-}\rm{RDK}}} \Leftrightarrow {K_{\rm{PY}}} \in {\rm{PY}{\text{-}\rm{RDK}}}$. We can therefore use the extensions of the theory of M\"uller and Regensburger to PY-RDK in \cite{FTJM2020} for HT-RDK. This includes the fundamental concept of kinetic deficiency and the following results on UCB:
\begin{definition}
	The kinetic deficiency $\widetilde{\delta}$ of $K$ is the kinetic deficiency of its associated poly-PL kinetics.
\end{definition}
\begin{theorem}
	If the kinetic deficiency of $K$ on a network $\mathscr{N}$ is zero, then, for any set of rate constants, the kinetics is unconditionally complex balancing on $\mathscr{N}$.
\end{theorem}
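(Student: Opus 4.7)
The plan is to reduce the statement to the known zero-kinetic-deficiency UCB result for PY-RDK systems via the associated poly-PL kinetics $K_{\rm{PY}}$. Since $K \in \rm{HT}\text{-}\rm{RDK}$, Proposition \ref{prop:KPY} gives $K_{\rm{PY}} \in \rm{PY}\text{-}\rm{RDK}$, and by the preceding definition the kinetic deficiency $\widetilde{\delta}$ of $K$ coincides with that of $K_{\rm{PY}}$. Hence the hypothesis $\widetilde{\delta}(K) = 0$ is equivalent to $\widetilde{\delta}(K_{\rm{PY}}) = 0$.

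Next, I would invoke the extension of the M\"uller--Regensburger zero-kinetic-deficiency theorem to PY-RDK systems established in \cite{FTJM2020} to conclude that, for every choice of positive rate constants, the associated poly-PL system is complex balanced, i.e., $Z_+(\mathscr{N}, K_{\rm{PY}}) \neq \varnothing$. Combined with Theorem \ref{theorem:equilibriasets}(i), which asserts $Z_+(\mathscr{N}, K) = Z_+(\mathscr{N}, K_{\rm{PY}})$, this yields $Z_+(\mathscr{N}, K) \neq \varnothing$, establishing unconditional complex balancing of $K$.

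The only nontrivial point is ensuring that the universal quantifier over rate constants transfers cleanly through the $K \mapsto K_{\rm{PY}}$ correspondence. This is where I would be most careful: from the construction ${K_{{\rm{PY}},q}}(x) = k_q M_q^+(x) L_q(x)$, the rate constants $k_q$ of $K$ appear as multiplicative prefactors of the corresponding component of $K_{\rm{PY}}$ unchanged, so the parametrization of rate constants is in bijective, prefactor-preserving correspondence. Hence varying $k$ arbitrarily on the HTK side is matched by varying $k$ arbitrarily on the PYK side, and the ``for all rate constants'' statement carries over directly. The main obstacle is therefore not analytic but bookkeeping, and it dissolves as soon as one writes down the explicit definition of $K_{\rm{PY}}$.
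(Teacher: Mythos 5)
Your proposal is correct and follows essentially the same route as the paper, which likewise derives the result from the zero-kinetic-deficiency UCB theorem for the associated poly-PL system (Proposition 10 of \cite{FTJM2020}) together with the equality $Z_+(\mathscr{N}, K) = Z_+(\mathscr{N}, K_{\text{PY}})$ of Theorem \ref{theorem:equilibriasets}. Your extra care about the bijective, prefactor-preserving correspondence of rate constants under $K \mapsto K_{\text{PY}}$ is a detail the paper leaves implicit, and it is handled correctly.
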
 
\begin{proof}
	This follows from Proposition 10 in \cite{FTJM2020} and Theorem \ref{theorem:equilibriasets}.
\end{proof}
In \cite{TMMN2020} the concept of kinetic reactant deficiency $\widehat{\delta}$ was extended to poly-PL systems. By defining it analogously for HT-RDK via the associated poly-PL kinetics, we obtain the following Corollary:
\begin{corollary}
	If the kinetic reactant deficiency of a  $K \in \rm{HT}\text{-}\rm{RDK}$ on a weakly reversible network is zero, then it is unconditionally complex balancing on $\mathscr{N}$. 
\end{corollary}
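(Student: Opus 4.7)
The plan is to mirror the two-line argument used in the preceding Theorem (the $\widetilde{\delta}=0$ case), but with the kinetic reactant deficiency $\widehat{\delta}$ in place of the kinetic deficiency $\widetilde{\delta}$. The overall strategy is a ``sandwich'': push the hypothesis from $K$ across to the associated poly-PL kinetics $K_{\rm{PY}}$, apply the already-known UCB criterion at the poly-PL level from \cite{TMMN2020}, and then push the conclusion back to $K$ using Theorem \ref{theorem:equilibriasets}.

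Concretely, I would carry out the following steps in order. First, invoke the definition just introduced: $\widehat{\delta}(K) := \widehat{\delta}(K_{\rm{PY}})$, so that the hypothesis $\widehat{\delta}(K) = 0$ immediately yields $\widehat{\delta}(K_{\rm{PY}}) = 0$. Second, use Proposition \ref{prop:KPY} to conclude that since $K \in \rm{HT}\text{-}\rm{RDK}$, we have $K_{\rm{PY}} \in \rm{PY}\text{-}\rm{RDK}$, while $\mathscr{N}$ is weakly reversible by assumption. Third, apply the corresponding result of Talabis et al.\ \cite{TMMN2020} for poly-PL systems: any weakly reversible PY-RDK system with zero kinetic reactant deficiency is unconditionally complex balancing, hence $Z_+(\mathscr{N}, K_{\rm{PY}}) \neq \varnothing$ for every choice of rate constants. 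Fourth, apply Theorem \ref{theorem:equilibriasets}(i) to obtain $Z_+(\mathscr{N}, K) = Z_+(\mathscr{N}, K_{\rm{PY}}) \neq \varnothing$ for every rate constant vector, which is exactly UCB for $K$ on $\mathscr{N}$.

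The only subtle point — and it is not really an obstacle — is that the rate constants of $K_{\rm{PY}}$ are not freely chosen: they are the products $k_{ij} = k_i a_{ij}$ determined by the rate constants $k_i$ of $K$ and the dissociation-constant data encoded in the factors $M_q^+$ and $L_q$. However, the cited poly-PL UCB theorem asserts complex balancing \emph{for every} rate constant vector of the PY-RDK system, so it applies a fortiori to the particular induced vectors coming from HT-RDK data. Thus the argument reduces entirely to concatenating three already-established facts: the definition of $\widehat{\delta}$ for HT-RDK via $K_{\rm{PY}}$, the PY-RDK translation of Proposition \ref{prop:KPY}, and the equilibrium-set identity of Theorem \ref{theorem:equilibriasets}, with the poly-PL UCB theorem of \cite{TMMN2020} doing the real work in the middle.
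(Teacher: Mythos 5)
Your proposal is correct in outline and shares the paper's overall ``sandwich'' strategy (transfer the hypothesis to $K_{\rm{PY}}$, do the work at the poly-PL level, transfer the conclusion back), but the middle step is genuinely different from the paper's. The paper does \emph{not} invoke a UCB theorem for zero kinetic reactant deficiency from \cite{TMMN2020}; it only takes the \emph{definition} of $\widehat{\delta}$ for poly-PL systems from there. Its actual argument is: zero kinetic reactant deficiency of $K$ forces zero kinetic reactant deficiency of each canonical PL component $K_{{\rm PY},j}$; Theorem 1 of M\"uller and Regensburger \cite{MURE2014} then converts this, component by component, into zero kinetic deficiency; hence $K$ itself has $\widetilde{\delta}=0$, and the immediately preceding Theorem (zero kinetic deficiency implies UCB) finishes the proof. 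In other words, the paper reduces the reactant-deficiency statement to the kinetic-deficiency statement it has already established, whereas you outsource the entire implication ``$\widehat{\delta}=0 \Rightarrow$ UCB for weakly reversible PY-RDK'' to \cite{TMMN2020}. Your route is shorter if that exact theorem exists in the cited reference, but that is precisely the point you cannot verify from this paper, and the authors' choice to derive it themselves via \cite{MURE2014} suggests they did not want to rely on it; their version needs only the definition from \cite{TMMN2020} plus a classical equivalence. Your closing observation about the induced rate constants $k_{ij}=k_i a_{ij}$ forming a restricted family within all PY rate constant vectors is a good one and applies equally to both arguments; it is handled correctly. If you want your proof to stand on its own, replace the appeal to a \cite{TMMN2020} UCB theorem with the two-step reduction through \cite{MURE2014} and the preceding kinetic-deficiency theorem.
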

\begin{proof}
	Let $K_\text{PY} = K_{\text{PY},1} + ... + K_{\text{PY},h}$ be the canonical PL-representation of $K_\text{PY}$. Zero kinetic reactant deficiency of $K$ implies that the kinetic reactant deficiency of each of the PL-kinetics $K_{\text{PY},j}$ is zero. In view of Theorem 1 in \cite{MURE2014}, it follows that the kinetic deficiency is zero for each $K_{\text{PY},j}$, which in turn implies that $K$ has zero kinetic deficiency and hence UCB.
\end{proof}

\subsection{Equilibria parametrization and multistationarity in Hill-type systems}
In this section, we formally define the equilibria subsets to which the parametrization and multistationarity results of M\"uller and Regensburger for GMAS can be extended.

\begin{definition}
	The subsets {\bf{PL-equilibria}} and {\bf{PL-complex balanced equilibria}} of an HTK system are defined as
	$E_{+,\text{PL}}(\mathscr{N}, K) := E_{+,\text{PL}}(\mathscr{N}, K_\text{PY})$ and $Z_{+,\text{PL}}(\mathscr{N}, K) := Z_{+,\text{PL}}(\mathscr{N}, K_\text{PY})$, respectively.
\end{definition}

%The associated PYK system is uniquely derived from the definition of an HTK system. Hence, it can be used to define concepts for the HTK system which are currently not available there and demonstrate analogous results. Our first result is the following.

%\begin{proposition}
%\label{prop:conditional:complex:balancing}
%{\bf (Conditional Complex Balancing)}. Let $(\mathscr{N}, K)$ be a weakly reversible HT-RDK kinetic system. Then there are rate constants such that the system is complex balanced.
%\end{proposition}
%\begin{proof}
%It follows from the Proposition \ref{prop:hilltype:assockinetics} that $K_\text{PY} \in$ PY-RDK. In \cite{FTJM2020}, it was shown that for any weakly reversible PY-RDK system, there are rate constants such that it is complex balanced, i.e., $Z_+(\mathscr{N}, K_\text{PY}) \ne \varnothing$. Since  $Z_+(\mathscr{N}, K) = Z_+(\mathscr{N}, K_\text{PY})$, then the claim follows.
%\end{proof}

%We use the associated poly-PL system also to define the concepts of kinetic order subspace $\widetilde S$, kinetic deficiency $\widetilde \delta = \dim \widetilde S$ and sign vector spaces. This allows us to state the corresponding results for HT-RDK systems as follows:
We can now state the extensions of parametrization and multistationarity:

\begin{theorem}
	Let $(\mathscr{N}, K)$ and $(\mathscr{N}, K_{\text{PY}})$ be a weakly reversible HT-RDK system and its associated PY-RDK system, respectively. Then, we have:
	\begin{itemize}
		\item[i.] {\bf Parametrization of PL-complex balanced equilibria set}: $$Z_{+,PL}(\mathscr{N}, K) = \left\{ {c \in {\mathbb{R}_{ > 0}^m}|\ln \left( c \right) - \ln \left( {{c^*}} \right) \in {{\left( \widetilde S \right)}^\perp}} \right\}$$
		\item[ii.] {\bf Multistationarity Criterion for PL-complex balanced equilibria}: A weakly reversible PY-RDK system has two distinct complex balanced PL-equilibria in a stoichiometric class $\Leftrightarrow$ ${\rm sign} (S) \cap {\rm sign} {\left( \widetilde S \right)}^\perp = 0$.
	\end{itemize}
	\label{mult:criterion:parametrization}
\end{theorem}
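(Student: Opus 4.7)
My plan is to reduce both statements to the corresponding results for the associated PY-RDK system $(\mathscr{N}, K_\text{PY})$, and then invoke the extensions of the M\"uller--Regensburger theory from GMAS to PY-RDK established in \cite{TMMN2020}. The starting observation is that by the very definition of $Z_{+,\text{PL}}(\mathscr{N}, K)$, we have $Z_{+,\text{PL}}(\mathscr{N}, K) = Z_{+,\text{PL}}(\mathscr{N}, K_\text{PY})$, so the two claims on the left-hand side are literally claims about the associated poly-PL system.

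To make the reduction rigorous, I would first invoke Proposition \ref{prop:KPY} to conclude that, since $K \in \text{HT-RDK}$, the associated kinetics $K_\text{PY}$ is PY-RDK; the underlying network is unchanged, so weak reversibility is preserved and $(\mathscr{N}, K_\text{PY})$ is a weakly reversible PY-RDK system. For part (i), fix any reference $c^* \in Z_{+,\text{PL}}(\mathscr{N}, K_\text{PY})$ (the case where this set is empty is vacuous). The PY-RDK parametrization result from \cite{TMMN2020} then gives directly
\[
Z_{+,\text{PL}}(\mathscr{N}, K_\text{PY}) \;=\; \bigl\{\, c \in \mathbb{R}^m_{>0} \;\bigm|\; \ln(c) - \ln(c^*) \in (\widetilde{S})^\perp \,\bigr\},
\]
which combined with the definitional identity yields exactly the claimed formula. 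For part (ii), the analogous sign-compatibility criterion for weakly reversible PY-RDK systems in \cite{TMMN2020} provides: two distinct PL-complex balanced equilibria $c^*, c^{**}$ lie in the same stoichiometric class iff $c^{**} - c^* \in S$ and $\ln(c^{**}) - \ln(c^*) \in (\widetilde{S})^\perp$ are both nonzero, which is equivalent by a standard argument to $\text{sign}(S) \cap \text{sign}((\widetilde{S})^\perp) \ne \{0\}$.

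The main obstacle I anticipate is the careful bookkeeping required to identify the kinetic order subspace $\widetilde{S}$ across three descriptions: the HT-RDK system itself, the associated poly-PL kinetics $K_\text{PY}$, and its canonical PL-representation $K_{\text{PY},1} + \cdots + K_{\text{PY},h}$. The associated poly-PL kinetics is built from the positive-kinetic-order factor $M_q^+$, the translated factor, and the $L_q$ complement of ${T_q}^{+}{T_q}^{\prime}$ within $\text{LCD}(K)$, so each $K_{\text{PY},j}$ carries its own kinetic order matrix $F_j$. One must check that these together induce a single reactant-determined kinetic order subspace, and that this subspace coincides with the $\widetilde{S}$ used on the HT-RDK side and with the one entering the \cite{TMMN2020} theorems. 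Once this identification is settled, both (i) and (ii) are immediate transfers via $Z_{+,\text{PL}}(\mathscr{N}, K) = Z_{+,\text{PL}}(\mathscr{N}, K_\text{PY})$ with no further computation needed.
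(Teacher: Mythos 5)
Your proposal is correct and follows essentially the same route as the paper: the definition $Z_{+,\text{PL}}(\mathscr{N}, K) := Z_{+,\text{PL}}(\mathscr{N}, K_{\text{PY}})$ together with Proposition \ref{prop:KPY} reduces both claims to the weakly reversible PY-RDK case, from which the parametrization and sign-vector multistationarity criterion carry over directly (the paper cites \cite{FTJM2020} for this transfer rather than \cite{TMMN2020}, but the mechanism is identical). Your extra care in tracking the kinetic order subspace $\widetilde S$ through the canonical PL-representation is a reasonable elaboration of a point the paper leaves implicit.
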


\begin{proof}
	These properties carry over from the associated poly-PL system derived in \cite{FTJM2020}. 
\end{proof}

\begin{remark}
	In \cite{WIFE2013}, the authors observed many similarities between power-law and Hill-type kinetics with respect to injectivity properties. Injective HTK systems form a subset of monostationary HTK systems since monostationarity is a necessary condition for injectivity. Monostationarity is clearly a question of equilibria sets, and in this regard, an HTK system $(\mathscr{N}, K)$ can be identified with its associated poly-PL system $(\mathscr{N}, K_\text{PY})$. This shows that the observed similarities derive from the fact that injective HTK systems can be embedded in the additive semigroup generated by PLK systems (where addition is coordinate-wise addition as defined in \cite{AJLM2017}).
\end{remark}

\subsection{PL-complex balanced HT-RDK systems}
We now introduce the analogue of PL-equilibrated systems for complex balanced equilibria.
\begin{definition}
	An HT-RDK system $(\mathscr{N}, K)$ is called {\bf{PL-complex balanced}} if it is complex balanced, i.e., $Z_+(\mathscr{N}, K) \ne \varnothing$, and 
	$Z_+(\mathscr{N}, K) = Z_{+,{\text{PL}}}(\mathscr{N}, K)$.
\end{definition}

We denote the set of PL-complex balanced HT-RDK systems with HT-RDK$_\text{PLC}$.

We can now state a fuller extension of the extensions in the previous two sections to this subset of HT-RDK systems:

\begin{theorem}
	If $K$ is a PL-complex balanced HT-RDK kinetics on a weakly reversible network, then the following statements hold:
	\begin{itemize}
		\item[i.] {\bf Unconditional Complex Balancing}: $\widetilde \delta =0$ if and only if $Z_+(\mathscr{N}, K) \ne \varnothing$ for any set of rate constants,
		\item[ii.] {\bf Parametrization}: $$Z_{+,PL}(\mathscr{N}, K) = \left\{ {c \in {\mathbb{R}_{ > 0}^m}|\ln \left( c \right) - \ln \left( {{c^*}} \right) \in {{\left( \widetilde S \right)}^\perp}} \right\},$$and
		\item[iii.] {\bf Multistationarity Criterion}: $(\mathscr{N}, K)$ is multistationary if and only if ${\rm sign} (S) \cap {\rm sign} {\left( \widetilde S \right)}^\perp = 0$.
	\end{itemize}
\end{theorem}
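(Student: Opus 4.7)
The plan is to reduce each of the three claims to the parallel statement for the associated PY-RDK system $(\mathscr{N},K_{\text{PY}})$ and then invoke results already available in the paper, in particular Theorem~\ref{mult:criterion:parametrization} together with the unconditional-complex-balancing Theorem of the preceding section. Three identifications drive the reduction: Proposition~\ref{prop:KPY} gives $K\in\text{HT-RDK}$ iff $K_{\text{PY}}\in\text{PY-RDK}$; Theorem~\ref{theorem:equilibriasets} transfers the equilibria sets, $E_+(\mathscr{N},K)=E_+(\mathscr{N},K_{\text{PY}})$ and $Z_+(\mathscr{N},K)=Z_+(\mathscr{N},K_{\text{PY}})$; and the PL-complex balanced hypothesis enforces $Z_+(\mathscr{N},K)=Z_{+,\text{PL}}(\mathscr{N},K)=Z_{+,\text{PL}}(\mathscr{N},K_{\text{PY}})$. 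The kinetic deficiency $\widetilde\delta$ is by construction the same for $K$ and $K_{\text{PY}}$.

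For (i), I would argue that the forward implication is immediate: $\widetilde\delta=0$ yields UCB of $K_{\text{PY}}$ by the preceding theorem of this section, and equality of the $Z_+$ sets transports UCB to $K$. For the converse, assume $Z_+(\mathscr{N},K)\neq\varnothing$ for every rate-constant vector; the PL-complex balanced hypothesis then upgrades this to $Z_{+,\text{PL}}(\mathscr{N},K_{\text{PY}})\neq\varnothing$ for every rate-constant vector, i.e.\ PL-unconditional complex balancing of the associated PY-RDK system, which by the converse direction of the PY-RDK kinetic-deficiency criterion in \cite{FTJM2020} forces $\widetilde\delta(K_{\text{PY}})=\widetilde\delta(K)=0$. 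Part (ii) is then essentially a substitution: plug $Z_{+,\text{PL}}(\mathscr{N},K)=Z_{+,\text{PL}}(\mathscr{N},K_{\text{PY}})$ into Theorem~\ref{mult:criterion:parametrization}(i). For (iii), the PL-complex balanced hypothesis identifies the set of positive complex-balanced equilibria with the set of PL-complex balanced equilibria; combining this with the Horn--Jackson observation that on a weakly reversible network the presence of a complex-balanced equilibrium forces every positive equilibrium to be complex balanced, multistationarity of $(\mathscr{N},K)$ reduces to the existence of two distinct elements of $Z_{+,\text{PL}}(\mathscr{N},K_{\text{PY}})$ in a stoichiometric class, and Theorem~\ref{mult:criterion:parametrization}(ii) delivers the sign criterion.

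I expect the main obstacle to be the converse direction of (i). Unconditional complex balancing of $K$ is a statement over all rate-constant vectors, whereas the PL-complex balanced hypothesis is framed for the particular $K$ at hand; one must therefore either confirm that the hypothesis persists under rate-constant perturbations, or else rely on the converse half of the PY-RDK kinetic-deficiency theorem in \cite{FTJM2020} already needing only PL-complex balancing at a single parameter value together with non-emptiness of $Z_{+,\text{PL}}$ for all parameters. A secondary subtlety in (iii) is the identification $E_+=Z_+$ for the PL-complex balanced system, for which I would invoke the Horn--Jackson result in its generalized poly-PL form via the associated PY-RDK system rather than trying to prove it directly on the Hill-type side.
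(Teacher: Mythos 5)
Your treatment of (ii) and (iii) matches the paper: both parts are obtained by substituting the assumed equality $Z_+(\mathscr{N},K)=Z_{+,\text{PL}}(\mathscr{N},K)=Z_{+,\text{PL}}(\mathscr{N},K_{\text{PY}})$ into Theorem~\ref{mult:criterion:parametrization}. (Your extra care in (iii), invoking a Horn--Jackson-type identification of $E_+$ with $Z_+$ before applying the two-equilibria criterion, is a step the paper leaves implicit; it does no harm and arguably tightens the argument.) The forward direction of (i) likewise agrees with the paper.

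The genuine gap is exactly where you suspected it: the converse of (i). Your route runs through the claim that $Z_+(\mathscr{N},K)\neq\varnothing$ for every rate-constant vector ``upgrades'' to $Z_{+,\text{PL}}(\mathscr{N},K_{\text{PY}})\neq\varnothing$ for every rate-constant vector via the PL-complex balanced hypothesis. But that hypothesis, as defined, is a property of the single system $(\mathscr{N},K)$ at one fixed rate vector; nothing in the definition guarantees it persists when the rate constants are varied, so the upgrade is not licensed. You flag this but do not resolve it, and neither of your two proposed escape routes is what the paper uses. The paper avoids $Z_{+,\text{PL}}$ entirely in this direction: from $Z_+(\mathscr{N},K)\neq\varnothing$ for all rate constants it passes to $Z_+(\mathscr{N},K_{\text{PY}})=Z_+(\mathscr{N}^*,K_{\text{PY}}^*)$, where $\mathscr{N}^*$ is the STAR-MSC transform; since the STAR-MSC decomposition of $\mathscr{N}^*$ into the PL subsystems $(\mathscr{N}^*_j,K_{\text{PY},j}^*)$ is a $\mathscr{C}$-decomposition and hence incidence-independent, Theorem~\ref{decomposition:thm:2}(iii) forces $Z_+(\mathscr{N}^*_j,K_{\text{PY},j}^*)\neq\varnothing$ for each $j$ and every rate vector, i.e.\ each PL subsystem has unconditional complex balancing; M\"uller--Regensburger then gives kinetic deficiency zero for each $K_{\text{PY},j}^*$, hence $\widetilde\delta(K_{\text{PY}})=\widetilde\delta(K)=0$. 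To repair your argument, replace the ``upgrade'' step by this descent through the STAR-MSC transform and the incidence-independent decomposition theorem.
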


\begin{proof}
	Statements (ii) and (iii) follow from Theorem \ref{mult:criterion:parametrization} due to the assumed equality of sets of complex balanced equilibria and PL-complex balanced equilibria. To show (i), we only need to show ``$\Leftarrow$'': since for any set of rate constants,
	$Z_{+}(\mathscr{N}, K)$ is non-empty, the same is true for $Z_{+}(\mathscr{N}, K_{\text{PY}}) = Z_{+}(\mathscr{N}^*, K_{\text{PY}}^*)$, the latter being the STAR-MSC transform.
	This implies that each $(\mathscr{N}^*_j, K_{\text{PY},j}^*)$ has unconditional complex balancing, and hence after M\"uller and Regensburger, with kinetic deficiency zero. Thus, $(\mathscr{N}, K_{\text{PY}})$ and consequently $(\mathscr{N}, K)$ has zero kinetic deficiency.
\end{proof}

\subsection{Examples of PL-complex balanced HT-RDK systems}
We now provide some examples of HT-RDK systems which are PL-complex balanced. Note that both underlying networks are weakly reversible.
\begin{example}
	Let $(\mathscr{N}, K)$ defined by reactions ${R_1}:{X_1} \to {X_2}$ and ${R_2}:{X_2} \to {X_1}$ with $K\left( x \right) = \left[ {\begin{array}{*{20}{c}}
			{\dfrac{{{x_1}}}{{1 + {x_1}}}}\\
			{\dfrac{{{x_2}}}{{1 + {x_2}}}}
	\end{array}} \right]$
	$ \Rightarrow {I_a} = \left[ {\begin{array}{*{20}{c}}
			{ - 1}&1\\
			1&-1
	\end{array}} \right]$
	and
	${K_{{\text{PY}}}}\left( x \right) = \left[ {\begin{array}{*{20}{c}}
			{{x_1}\left( {1 + {x_2}} \right)}\\
			{{x_2}\left( {1 + {x_1}} \right)}
	\end{array}} \right].$
	We can easily show that ${Z_ + }\left( {\mathscr{N},{K}} \right) = \left\{ {t\left( {1,1} \right)|t > 0} \right\}\ne \varnothing.$
	Consider ${K_{{\rm{PY,1}}}}\left( x \right) = \left[ {\begin{array}{*{20}{c}}
			{{x_1}}\\
			{{x_2}}
	\end{array}} \right]$
	and
	${K_{{\rm{PY,2}}}}\left( x \right) = \left[ {\begin{array}{*{20}{c}}
			{{x_1}{x_2}}\\
			{{x_1}{x_2}}
	\end{array}} \right]$.
	Now, $${Z_ + }\left( {\mathscr{N},{K_{{\rm{PY,1}}}}} \right) = \left\{ {t\left( {1,1} \right)|t > 0} \right\}$$ and $${Z_ + }\left( {\mathscr{N},{K_{{\rm{PY,2}}}}} \right) = \left\{ {\left( {s,t} \right)|s,t > 0} \right\}.$$
	Thus, $${Z_ + }\left( {\mathscr{N},K} \right) = {Z_ + }\left( {\mathscr{N},{K_{{\rm{PY,1}}}}} \right) \cap {Z_ + }\left( {\mathscr{N},{K_{{\rm{PY,2}}}}} \right) = {Z_{ + ,{\rm{PL}}}}\left( {\mathscr{N},K} \right).$$
\end{example}

\begin{example}
	\label{example:running2}
	Referring to Example \ref{example:running1} in Section \ref{positive:equilibria:hill}, consider $(\mathscr{N}, K)$ with
	\[\begin{array}{*{20}{c}}
		{{R_1}:{X_1} \to {X_1} + {X_2}}\\
		{{R_2}:{X_1} + {X_2} \to {X_1}}\\
		{{R_3}:{X_3} \to {X_1}{\rm{ \ \ \ \ \ \ \  \  }}}
	\end{array}{\rm{ \ \   and \ \  }}K\left( x \right) = \left[ {\begin{array}{*{20}{c}}
			{\dfrac{{{x_1}}}{{1 + {x_1}}}}\\
			{\dfrac{{{x_1}}}{{1 + {x_1}}}}\\
			{\dfrac{{{x_3}}}{{1 + {x_3}}}}
	\end{array}} \right].\]
	Note that
	\[{K_{{\rm{PY}}}}\left( x \right) =
	\left[ {\begin{array}{*{20}{c}}
			{{x_1} + {x_1}{x_3}}\\
			{{x_1} + {x_1}{x_3}}\\
			{{x_3} + {x_1}{x_3}}
	\end{array}} \right].\]
	Also, let
	\[{K_{{\rm{PY,1}}}}\left( x \right) = \left[ {\begin{array}{*{20}{c}}
			{{x_1}}\\
			{{x_1}}\\
			{{x_3}}
	\end{array}} \right]{\rm{\ \ \ and \ \ \  }}{K_{{\rm{PY,2}}}}\left( x \right) = \left[ {\begin{array}{*{20}{c}}
			{{x_1}x_3}\\
			{{x_1}x_3}\\
			{{x_1}x_3}
	\end{array}} \right].\]
	We can verify that
	$${Z_ + }\left( {\mathscr{N},K} \right) = \bigcap\limits_{j = 1}^2  {Z_ + }\left( {\mathscr{N},{K_{{\rm{PY}},j}}} \right)  = {Z_{ + ,{\rm{PL}}}}\left( {\mathscr{N},K} \right).$$
\end{example}

\subsection{Balanced concentration robustness in PL-complex balanced systems}
In \cite{FM2020}, the following concept was introduced:
\begin{definition}
	A complex balanced chemical kinetic system $\left( {\mathscr{N},K} \right)$ has {\bf{balanced concentration robustness (BCR)}} in a species $X \in \mathscr{S}$ if $X$ has the same value for all $c \in Z_+\left( {\mathscr{N},K} \right)$.
\end{definition}

In analogy to Theorem \ref{shinar:feinberg:theorem:htdrk} for PL-equilibrated systems, we have the following result for PL-complex balanced systems:

\begin{theorem}
	(Shinar-Feinberg BCR Theorem for HT-RDK$_{\text{PLC}}$)
	Let $(\mathscr{N}, K)$ be a weakly reversible, deficiency one PL-complex balanced HT-RDK system.
	If $Z_+(\mathscr{N}, K) \ne \varnothing$ and an SF-pair in a species $X$ exists, then $(\mathscr{N}, K)$ has BCR in $X$.
\end{theorem}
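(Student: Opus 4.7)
The plan is to mirror closely the proof of Theorem \ref{shinar:feinberg:theorem:htdrk}, with the associated poly-PL system again serving as the vehicle that transfers the property from the power-law setting to the Hill-type setting, and with the set of PL-complex balanced equilibria replacing the set of positive equilibria. The starting point is the canonical PL-representation $K_{\text{PY}} = K_{\text{PY},1} + \cdots + K_{\text{PY},h}$. By Proposition \ref{prop:KPY}, since $K \in \text{HT-RDK}$, each $(\mathscr{N}, K_{\text{PY},j})$ is a PL-RDK system on the same underlying network, so each has deficiency one. The PL-complex balanced hypothesis gives
\[
Z_+(\mathscr{N}, K) = Z_{+,\text{PL}}(\mathscr{N}, K) = \bigcap_{j=1}^{h} Z_+(\mathscr{N}, K_{\text{PY},j}),
\]
and since by assumption $Z_+(\mathscr{N}, K) \neq \varnothing$, each $Z_+(\mathscr{N}, K_{\text{PY},j})$ is nonempty as well. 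Thus every $(\mathscr{N}, K_{\text{PY},j})$ is a weakly reversible, deficiency-one, complex balanced PL-RDK system.

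Next, I would invoke the definition of SF-pair for an HTK system: by hypothesis there is at least one index $j^{*}$ such that the rows of the kinetic order matrix $F_{j^{*}}$ of $(\mathscr{N}, K_{\text{PY},j^{*}})$ corresponding to the SF-pair differ only in the species $X$. Applying the Shinar-Feinberg BCR Theorem for PL-RDK systems from \cite{FM2020} (the BCR analogue of Theorem \ref{SFTACR}, which is available because $(\mathscr{N}, K_{\text{PY},j^{*}})$ is weakly reversible, deficiency one, complex balanced, and of SF-type in $X$) yields that $(\mathscr{N}, K_{\text{PY},j^{*}})$ has BCR in $X$, i.e., every element of $Z_+(\mathscr{N}, K_{\text{PY},j^{*}})$ has the same $X$-coordinate. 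Combining this with the inclusion $Z_+(\mathscr{N}, K) \subseteq Z_+(\mathscr{N}, K_{\text{PY},j^{*}})$ from the intersection identity above, it follows that every element of $Z_+(\mathscr{N}, K)$ also has a common $X$-coordinate, which is the statement that $(\mathscr{N}, K)$ has BCR in $X$.

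The only real subtlety I anticipate, rather than a genuine obstacle, is the bookkeeping around the PL-complex balancing hypothesis: one must use it precisely to turn the inclusion $Z_+(\mathscr{N}, K) \subseteq Z_+(\mathscr{N}, K_{\text{PY}})$ (available from Theorem \ref{theorem:equilibriasets}) into the sharper inclusion into each $Z_+(\mathscr{N}, K_{\text{PY},j})$, which is what Shinar-Feinberg BCR needs as input on an individual $j$-slice. Everything else (weak reversibility, deficiency one, the PL-RDK nature of each summand, and the existence of an SF-pair for some $j^{*}$) is inherited directly from the hypotheses and from properties of the canonical PL-representation already established earlier in the paper, so no new technical machinery is required.
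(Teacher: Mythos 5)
Your proposal is correct and is essentially the paper's own argument: the paper simply states that the proof is ``easily adapted from that of Theorem \ref{shinar:feinberg:theorem:htdrk},'' and your adaptation (replacing $E_+$ with $Z_+$, using the PL-complex balanced hypothesis to pass from $Z_+(\mathscr{N},K)$ to the individual $Z_+(\mathscr{N},K_{\text{PY},j^*})$, and invoking the BCR analogue of Theorem \ref{SFTACR} from \cite{FM2020}) is exactly the intended adaptation. Your closing remark correctly identifies the one place where the PL-complex balancing hypothesis is genuinely used, which the paper leaves implicit.
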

The proof is easily adapted from that of Theorem \ref{shinar:feinberg:theorem:htdrk}. Since for deficiency zero networks, BCR coincides with ACR and in general, ACR implies BCR, the results of Section \ref{ACR:def:zero:hill:type} also hold for BCR. Again, using the general results of Section 6.4 in \cite{FTJM2020}, we obtain sufficient conditions for BCR in large and higher deficiency kinetic systems.

\section{Extension to poly-PL quotients and beyond}
\label{extension:beyond}
In this section, we outline extensions of the scope of the results presented for HTK systems. We first discuss the validity of the results for quotients of poly-PL kinetics and provide examples from enzyme kinetics as well as systems biology. We conclude the section with remarks on the extension to multiplicative monoids of kinetics and the groups they generate in the framework introduced by Arceo et al \cite{AJLM2017}.

\subsection{Extension to poly-PL quotient kinetics}
We presented the results so far in terms of Hill-type kinetics primarily because of their ubiquity and importance in biochemistry, in particular, in enzymology. However, we can observe that most of the results are more generally valid for kinetics, which when restricted to the positive orthant, are quotients of poly-PL kinetics, which we denote by $\text{PQK}(\mathscr{N})$.

For $K \in \text{PQK}(\mathscr{N})$ with $K_q = k_q \dfrac{{{M_q}\left( x \right)}}{{{T_q}\left( x \right)}}$, we denote by $T(x)$ the product $\prod\limits_{k = 1}^r {{T_k}\left( x \right)}$.
For a reaction $q$, we set $T_{(q)}(x) := {\prod\limits_{k \ne q} {{T_k(x)}} }$.
Since ${T_q}\left( x \right){T_{\left( q \right)}}\left( x \right) = T\left( x \right)$, we can write the interaction function of $K_q$, $\dfrac{{{M_q}\left( x \right)}}{{{T_q}\left( x \right)}} = \dfrac{{{M_q}\left( x \right){T_{\left( q \right)}}\left( x \right)}}{{T\left( x \right)}}$.

\begin{theorem}
	Let $K \in \text{PQK}(\mathscr{N})$ and $K_{\rm{PY}}$ its associated PYK given by $$K_{\text{PY},q}(x):=k_q M_q(x)T_{(q)}(x).$$ We have
	\begin{itemize}
		\item[i.] $Z_+(\mathscr{N}, K) = Z_+(\mathscr{N}, K_\text{PY})$, and
		\item[ii.] $E_+(\mathscr{N}, K) = E_+(\mathscr{N}, K_\text{PY})$.
	\end{itemize}
\end{theorem}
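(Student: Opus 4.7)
The plan is to mirror the proof strategy of Theorem \ref{theorem:equilibriasets} almost verbatim, since the essential mechanism --- factoring out a strictly positive common denominator --- carries over unchanged from the Hill-type setting to the broader PQK setting. The only modification is the choice of common denominator: instead of $\mathrm{LCD}(K)$ (which exploited the bi-PL structure of the Hill factors to obtain the smallest common denominator), we simply use the full product $T(x) = \prod_{k=1}^r T_k(x)$. This is wasteful in size but harmless for the equilibria argument, since all we need is strict positivity on $\mathbb{R}_{>0}^m$.

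For part (i), I would write the complex formation rate function of $(\mathscr{N},K)$ as
\[
g(x) \;=\; \sum_{q \in \mathscr{R}} k_q\,\frac{M_q(x)}{T_q(x)}\,(\omega'_q - \omega_q)
\;=\; \sum_{q \in \mathscr{R}} k_q\,\frac{M_q(x)\,T_{(q)}(x)}{T(x)}\,(\omega'_q - \omega_q),
\]
using the identity $T_q(x)\,T_{(q)}(x) = T(x)$ already recorded in the paragraph before the theorem. Pulling the common factor $1/T(x)$ out of the sum gives $g(x) = \tfrac{1}{T(x)}\,g_{\text{PY}}(x)$, where $g_{\text{PY}}$ is the CFRF of $(\mathscr{N},K_{\text{PY}})$ with $K_{\text{PY},q}(x) = k_q M_q(x) T_{(q)}(x)$. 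For any $x \in \mathbb{R}_{>0}^m$ each $T_k(x) > 0$ (each $T_k$ is a poly-PL with nonnegative coefficients evaluated on the positive orthant, and by the quotient assumption on $K$ it is nonzero on $\mathbb{R}_{>0}^m$), so $T(x) > 0$. Hence the conditions $I_a \cdot K(x) = 0$ and $I_a \cdot K_{\text{PY}}(x) = 0$ are equivalent, which is exactly $Z_+(\mathscr{N},K) = Z_+(\mathscr{N},K_{\text{PY}})$.

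For part (ii), the species formation rate function satisfies $f = Y \circ g$, so $f(x) = \tfrac{1}{T(x)} f_{\text{PY}}(x)$ by the same pulling-out argument applied coordinatewise (or equivalently by applying $Y$ to the equation above). Strict positivity of $1/T(x)$ on $\mathbb{R}_{>0}^m$ then yields $f(x) = 0 \iff f_{\text{PY}}(x) = 0$, which is (ii).

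The only conceptual point that needs care --- and which I view as the main (minor) obstacle --- is justifying that $T(x)$ is strictly positive on $\mathbb{R}_{>0}^m$ in the PQK generality, since \emph{a priori} a poly-PL denominator is only nonnegative. This follows from the standing hypothesis that $K$ is defined on $\mathbb{R}_{>0}^m$ as a bona fide quotient: if some $T_k$ vanished at a positive point, $K_k$ would fail to be defined there, contradicting $K \in \text{PQK}(\mathscr{N})$. Once this is noted, the rest of the argument is a line-for-line analog of the Hill-type proof, and claim (iii) of the Hill-type theorem (defined on the whole nonnegative orthant) is deliberately not asserted here, since poly-PL quotients can have genuine singularities on the boundary.
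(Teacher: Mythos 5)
Your proposal is correct and follows essentially the same route as the paper, which simply declares the proof identical to that of Theorem \ref{theorem:equilibriasets}: factor out the strictly positive common denominator $T(x)=\prod_k T_k(x)$ from the CFRF and SFRF and conclude that the zero sets coincide on $\mathbb{R}_{>0}^m$. Your added remarks --- justifying strict positivity of $T(x)$ from well-definedness of the quotient kinetics and noting why claim (iii) is dropped --- are sound and, if anything, make the adaptation more explicit than the paper does.
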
 
\begin{proof}
	The proof is identical to Theorem \ref{theorem:equilibriasets}.
\end{proof}

\begin{example}
	Referring to the running example (i.e., Example 3.7) on pages 16 and 25 of \cite{MHRM2020}, consider the following CRN:
	\begin{align*}
		{R_1}&:5X + Y \to X + 3Y\\
		{R_2}&:X + 3Y \to 5X + Y
	\end{align*}
	but endowed with the following kinetics:
	$$K\left( X,Y \right) = \left[ {\begin{array}{*{20}{c}}
			k_1\left({\dfrac{{{\alpha _1}X + {\alpha _2}Y}}{X} + {\alpha _4}X + {\alpha _3}Y}\right)\\
			k_2\left({\dfrac{1}{X}\left( {\dfrac{{{\beta _2}}}{X} + \dfrac{{{\beta _1}}}{Y}} \right) + \dfrac{{{\beta _3}}}{X} + \dfrac{{{\beta _4}}}{Y}}\right)
	\end{array}} \right],$$
	which can be written in the following manner:
	\[K\left( {X,Y} \right) = \left[ {\begin{array}{*{20}{c}}
			k_1{\dfrac{{{\alpha _1}X + {\alpha _2}Y + {\alpha _3}XY + {\alpha _4}{X^2}}}{X}}\\
			k_2{\dfrac{{{\beta _1}X + {\beta _2}Y + {\beta _3}XY + {\beta _4}{X^2}}}{{{X^2}Y}}}
	\end{array}} \right].\]
	Since the LCM of the denominators is $X^2Y$, we obtain the kinetics $K_{\text{PY}}\left( X,Y \right)$ precisely in the running example of \cite{MHRM2020}:
	$$K_{\text{PY}}\left( X,Y \right)
	= \left[ {\begin{array}{*{20}{c}}
			k_1 \left(\alpha_1 X^2Y+  \alpha_2 XY^2 + \alpha_3 X^2Y^2 +\alpha_4 X^3Y \right)\\
			k_2 \left(\beta_1 X+  \beta_2 Y + \beta_3 XY + \beta_4 X^2 \right) 
	\end{array}} \right].$$
	Since $(\mathscr{N}, K_\text{PY})$ has the capacity for multistationarity for particular rate constants \cite{MHRM2020}, then so is the system $(\mathscr{N}, K)$.
\end{example}

\begin{example}
	Many kinetics for enzymatic reactions are examples for PQK. As documented in the Appendix 2 of \cite{GAHA2015}, among them are the following: ordered bi uni, allosteric inhibition (reversible), mixed inhibition (irreversible), catalytic activation (irreversible) and both irreversible variants of substrate activation and inhibition.
\end{example}

\begin{example}
	An example from Systems Biology is a model of {\it{Mycobacterium tuberculosis}} (Mtb) development within a human host by Magombedze and Mulder \cite{MAMU2012}. The system with 8 species has the following ODE system:\\
	$$\dfrac{{d{X_1}}}{{dt}} = {\alpha _E}\left( {\dfrac{{{X_5}}}{{{X_5} + {K_{MO}}}}} \right){X_1} - {\alpha _1}\left( {\dfrac{{{X_7}}}{{{X_7} + {K_{ML}}}}} \right){X_1} + {\gamma _1}\left( {1 - \dfrac{{{X_7}}}{{{X_7} + {K_{ML}}}}} \right){X_2}$$
	\hspace{1.1cm}$- {\mu _E}{X_1}$\\
	$\dfrac{{d{X_2}}}{{dt}} = {\alpha _L}\left( {\dfrac{{{X_5}}}{{{X_5} + {K_{MO}} + {F_a}}}} \right){X_1} + {\alpha _1}\left( {\dfrac{{{X_7}}}{{{X_7} + {K_{ML}}}}} \right){X_1} - {\alpha _2}\left( { \dfrac{{{X_8}}}{{{X_8} + {K_{MD}}}}} \right){X_2}$\\
	\[ - {\gamma _1}\left( {1 - \frac{{{X_7}}}{{{X_7} + {K_{ML}}}}} \right){X_2} + {\gamma _2}\left( {1 - \frac{{{X_8}}}{{{X_8} + {K_{MD}}}}} \right){X_3} - {\mu _L}{X_2}\]
	$\dfrac{{d{X_3}}}{{dt}} = {\alpha _2}\left( {\dfrac{{{X_8}}}{{{X_8} + {K_{MD}}}}} \right){X_2} - {\gamma _2}\left( {1 - \dfrac{{{X_8}}}{{{X_8} + {K_{MD}}}}} \right){X_3} - {\mu _D}{X_3}$\\
	$\dfrac{{d{X_4}}}{{dt}} = {N_{OT}} - {\mu _{PNE}}{X_4}{X_1} - {\mu _{PL}}{X_4}{X_2} - {\mu _N}{X_4}$\\
	$\dfrac{{d{X_5}}}{{dt}} = {O_O} - {\mu _{POE}}{X_5}{X_1} - {\mu _{POL}}{X_5}{X_2} - {\mu _{NO}}{X_5}{X_6} - {\mu _O}{X_5}$\\
	$\dfrac{{d{X_6}}}{{dt}} = {N_{ON}} - {e_{BE}}{X_6}{X_1} - {e_{BL}}{X_6}{X_2} - {e_N}{X_6}$\\
	$\dfrac{{d{X_7}}}{{dt}} = {B_L} + {S_{TL}}\left( {\dfrac{{{X_6}}}{{{X_6} + {K_{MN}}}}} \right)\left( {1 - \dfrac{{{X_5} + {X_4}}}{{{X_5} + {X_4} + {K_{MO}} + {K_{MT}}}}} \right){X_7} - {\mu _{LXP}}{X_7}$\\
	\[\frac{{d{X_8}}}{{dt}} = {B_D} + {S_{TD}}\left( {\frac{{{X_6}}}{{{X_6} + {K_{MN}}*{R_{pf}}}}} \right)\left( {1 - \frac{{{X_5} + {X_4}}}{{{X_5} + {X_4} + \left( {{K_{MO}} + {K_{MT}}} \right)*{R_{pf}}}}} \right){X_8}\]
	\hspace{1.1cm}$ - {\mu _{DXP}}{X_8}$
\end{example}

As easily observed, all kinetic functions belong to $\text{PQK}(\mathscr{N})$. 
In view of the simple forms of the denominators in this case, we can also work with an ``lesser common denominator'' analogous to that for HTK instead of the full product.  
Since there are six distinct denominators, four of which are binomials and two trinomials, the associated poly-PL system will have 144 terms in its canonical PL-representation. We are currently developing computational tools to analyze multiplicity and concentration robustness in such systems.

\subsection{Groups generated by multiplicative monoids of kinetics}
A further observation is that the above construction applies to any multiplicative monoid (semigroup with identity) of kinetics and the group it generates. In particular, ACR and BCR properties are identical.

Our PYK approach has also shown the usefulness of transformations, i.e., dynamic equivalences which leave the stoichiometric subspace invariant. This suggests that one should go beyond the standard implication ``dynamic equivalence $\Rightarrow$ linear conjugacy'' to the longer chain of relations ``transformation $\Rightarrow$ dynamic equivalence $\Rightarrow$ linear conjugacy $\Rightarrow$ equilibria equality'', and still obtain interesting results.

\section{Summary, Conclusions, and Outlook}
\label{sec:sum}
We summarize our results and provide some direction for future research.
\begin{itemize}
	\item[1.] We associate a unique positive linear combination of power-law (PL) kinetic system to a given Hill-type kinetic (HTK) system, and made a comparison with a case specific procedure for computing positive equilibria. Note that the equilibria sets of the two systems coincide. This leads us to identify two important subsets of Hill-type kinetics: the PL-equilibrated and the PL-complex balanced kinetics.
	\item[2.] We used recent results on absolute concentration robustness (ACR) in a species $X$ to establish the Shinar-Feinberg ACR Theorem for PL-equilibrated HT-RDK systems (the subset of complex factorizable HTK systems). This serves as a foundation for the study of ACR in HTK systems.
	\item[3.] Larger and higher deficiency HTK systems with ACR in a species $X$ were explored by the use of the Feinberg's Theorem for independent network decompositions.
	\item[4.] We extended some results of M\"uller and Regensburger on generalized mass action kinetic systems to PL-complex balanced HT-RDK systems.
	\item[5.] We derived the theory of balanced concentration robustness (BCR) in an analogous manner to ACR for PL-equilibrated systems.
	\item[6.] We discussed the extension of our results to more general kinetics including poly-PL quotients (i.e., both the numerator and denominator are nonnegative linear combination of power-law functions).
	\item[7.] As future perspective, it is worth working on computational approaches or tools for analysis of ACR and BCR using our results in this paper.
\end{itemize}
%\begin{itemize}
%\item[1.] We 
%\end{itemize}

%\begin{acknowledgements}
%If you'd like to thank anyone, place your comments here
%and remove the percent signs.
%\end{acknowledgements}

% BibTeX users please use one of
%\bibliographystyle{spbasic}      % basic style, author-year citations
%\bibliographystyle{spmpsci}      % mathematics and physical sciences
%\bibliographystyle{spphys}       % APS-like style for physics
%\bibliography{}   % name your BibTeX data base

\begin{thebibliography}{}
	%
	% and use \bibitem to create references. Consult the Instructions
	% for authors for reference list style.
	%
	%\bibitem{RefJ}
	% Format for Journal Reference
	%Author, Article title, Journal, Volume, page numbers (year)
	% Format for books
	%\bibitem{RefB}
	%Author, Book title, page numbers. Publisher, place (year)
	% etc
	
	\bibitem{AJLM2017}
	C. Arceo, E. Jose, A. Lao, E. Mendoza, Reaction networks and kinetics of biochemical systems. Math. Biosci., 283, 13-29 (2017).
	
	\bibitem{arceo2015}
	C. Arceo, E. Jose, A. Marin-Sanguino, E. Mendoza, Chemical reaction network approaches to biochemical systems theory, Math. Biosci. 269, 135-152 (2015).
	
	\bibitem{FAML2020}
	H. F. Farinas, E. R. Mendoza, A. R. Lao, Chemical reaction network decompositions and realizations of S-systems, \textit{Preprint arXiv:2003.01503}, 2020.
	
	\bibitem{feinberg12}
	M. Feinberg, Chemical reaction network structure and the stability of complex isothermal reactors I: The deficiency zero and deficiency one theorems, \textit{Chem. Eng. Sci.} 42, 2229-2268 (1987).
	
	\bibitem{feinberg:complex:balancing}
	M. Feinberg, Complex balancing in general kinetic systems, Arch. Ration. Mech. Anal. 49, 187-194 (1972).
	
	\bibitem{feinberg}
	M. Feinberg, Lectures on chemical reaction networks, University of Wisconsin (1979). Available at https://crnt.osu.edu/LecturesOnReactionNetworks.
	
	\bibitem{feinberg1}
	M. Feinberg, The existence and uniqueness of steady states for a class of chemical reaction networks, Arch. Ration. Mech. Anal. 132, 311-370 (1995).
	
	\bibitem{fortun2}
	N. Fortun, A. Lao, L. Razon, E. Mendoza, A deficiency zero theorem for a class of power-law kinetic
	systems with non-reactant-determined interactions, MATCH Commun. Math. Comput. Chem. 81(3), 621-638 (2019).
	
	\bibitem{FLRM2020}
	N. Fortun, A. Lao, L. Razon, E. Mendoza, Robustness in power-law kinetic systems with reactant-determined interactions, in: Proceedings of the Japan Conference on Geometry, Graphs and Games 2018, Lect. Notes Comput. Sci., Springer (in press), 2020.
	
	\bibitem{FM2020}
	N. Fortun, E. Mendoza, Absolute concentration robustness in power law kinetic systems, (2020, accepted).
	
	\bibitem{FTJM2020} N.T. Fortun, D.A.S.J. Talabis, E.C. Jose, E.R. Mendoza, Complex balanced equilibria of poly-PL systems: multiplicity, robustness and stability (2020, submitted).
	
	\bibitem{GAHA2015}
	A. G\'abor, K.M. Hangos, J.R. Banga, G. Szederk\'enyi, Reaction network realizations of rational biochemical systems and their structural properties, J. Math. Chem. 53, 1657-1686 (2015). https://doi.org/10.1007/s10910-015-0511-9
	
	\bibitem{GAHA2016}
	A. G\'abor, K.M. Hangos, G. Szederk\'enyi, Linear conjugacy in biochemical reaction networks with rational reaction rates, J. Math. Chem. 54, 1658-1676 (2016). https://doi.org/10.1007/s10910-016-0642-7
	
	\bibitem{ghms2019}
	E. Gross, H. Harrington, N. Meshkat, A. Shiu, Joining and decomposing reaction networks, J. Math. Biol. 80, 1683-1731 (2020).
	
	\bibitem{hmr2019}
	B.S. Hernandez, E.R. Mendoza, A.A. de los Reyes V, A computational approach to multistationarity of power-law kinetic systems, J. Math. Chem. 58, 56-87 (2020). https://doi.org/10.1007/s10910-019-01072-7
	
	\bibitem{hmr22019}
	B.S. Hernandez, E.R. Mendoza, A. A. de los Reyes V, Fundamental decompositions and multistationarity of power-law kinetic systems, MATCH Commun. Math. Comput. Chem. 83(2), 403-434 (2020).
	
	\bibitem{HILL1910}
	A.V. Hill, The possible effects of the aggregation of the molecules of haemoglobin on its dissociation curves. J. Physiol. 40 (Suppl): iv-vii. (1910).
	https://doi.org/10.1113/jphysiol.1910.sp001386
	
	\bibitem{Horn:ns}
	F. Horn, Necessary and sufficient conditions for complex balancing in chemical kinetics, Arch. Ration. Mech. Anal. 49, 173-186 (1972).
	
	\bibitem{HornJackson}
	F. Horn, R. Jackson, General mass action kinetics. Arch. Ration. Mech. Anal. 47, 187-194 (1972).
	
	\bibitem{ji}
	H. Ji, Uniqueness of equilibria for complex chemical reaction networks, Ph.D. Dissertation, Ohio State University (2011).
	
	\bibitem{MAGP2019} 
	D.M. Magpantay, Chemical reactions network Theory (CRNT) analysis and applications of poly-PL kinetics systems, Ph.D. Thesis, De la Salle University (2019).
	
	\bibitem{MHRM2020}
	D.M. Magpantay, B.S. Hernandez, A.A. de los Reyes V, E.R. Mendoza, E.G. Nocon, A computational approach to multistationarity in poly-PL kinetic systems, MATCH Commun. Math. Comput. Chem. (2020, accepted).
	
	\bibitem{MAMU2012}
	G. Magombedze, N. Mulder, A mathematical representation of the development of Mycobacterium tuberculosis: active, latent and dormant stages, Journal of Theoretical Biology 292, 44-59 (2012).
	
	\bibitem{MIME1913}
	L. Michaelis, M. L. Menten, Die Kinetik der Invertinwirkung. Biochem. Z. 49, 333-369 (1913).
	
	\bibitem{MURE2014}
	S. M\"uller, G. Regensburger, Generalized Mass Action Systems and Positive Solutions of Polynomial Equations with Real and Symbolic Exponents, Proceedings of CASC 2014, (eds. V.P. Gerdt, W. Koepf, W.M. Seiler, E.H. Vorozhtsov), Lecture Notes in Comput. Sci. 8660, 302-323 (2014).
	
	%\bibitem{muller}
	%S. M\"uller, G. Regensburger, Generalized mass action systems: complex balancing equilibria and sign vectors of the stoichiometric and kinetic order subspaces, SIAM J. Appl. Math. 72(6), 1926-1947 (2012).
	
	\bibitem{NEML2019}
	A.L. Nazareno, R.P.L. Eclarin, E.R. Mendoza, A.R. Lao, Linear conjugacy of chemical kinetic systems, Math. Biosci. Eng. 16(6) 8322-8355 (2019).
	
	\bibitem{SEGE1975}
	I. Segel, Enzyme kinetics: Behavior and analysis of rapid equilibrium and steady?state enzyme systems, Wiley?Interscience, New York (1975).
	
	\bibitem{SF2010}
	G. Shinar, M. Feinberg, Structural sources of robustness in biochemical reaction networks, Science 327, 1389-1391 (2010).
	
	\bibitem{SHVA2007}
	A. Sorribas, B. Hern\'andez-Bermejo, E. Vilaprinyo, R. Alves, Cooperativity and saturation in biochemical networks: a saturable formalism using Taylor series approximations, Biotechnol. Bioeng. 97(5), 1259-1277 (2020).
	
	\bibitem{TMMN2020} 
	D.A.S.J. Talabis, D.M. Magpantay, E.R. Mendoza, E.G. Nocon, E.C. Jose, Complex balanced equilibria of weakly reversible poly-PL kinetic systems and evolutionary games,  MATCH Commun. Math. Comput. Chem. 83(2) 375-402 (2020).
	
	\bibitem{VILA2007}
	E. Vilaprinyo, Design principles and operational principles in genetic and biochemical systems: adaptive response of yeast to stress (PhD thesis), University of Lleida (2007).
	
	\bibitem{WIFE2013}
	C. Wiuf, E. Feliu, Power-Law Kinetics and Determinant Criteria for the Preclusion of Multistationarity in Networks of Interacting Species, SIAM J. Appl. Dyn. Syst. 12(4), 1685-1721 (2013).
	
	
\end{thebibliography}

% Non-BibTeX users please use

\appendix
\section{Nomenclature}
\subsection{List of abbreviations}
%\FloatBarrier
%\begin{table}
% table caption is above the table
%\caption{caption}
%\label{tab:ap1}       % Give a unique label
% For LaTeX tables use
%\centering
\begin{tabular}{ll}
	%\hline\noalign{\smallskip}
	%first & second \\
	\noalign{\smallskip}\hline\noalign{\smallskip}
	Abbreviation& Meaning \\
	\noalign{\smallskip}\hline\noalign{\smallskip}
	ACR& absolute concentration robustness\\
	BCR& balanced concentration robustness\\
	CFK& complex factorizable kinetic(s)\\
	CFRF& complex formation rate function\\
	CRN& chemical reaction network\\
	GMAS& generalized mass action system\\
	HTK& Hill-type kinetic(s)\\
	MAK& mass action kinetic(s)\\
	MSA& multistationarity algorithm\\
	NFK& non-complex factorizable kinetic(s)\\
	PLK& power-law kinetic(s)\\
	PYK& poly-PL kinetic(s)\\
	RIDK& rate contant-interaction map decomposable kinetic(s)\\
	SFRF& species formation rate function\\
	\noalign{\smallskip}\hline
\end{tabular}
%\end{table}
%\FloatBarrier
\subsection{List of important symbols}
% table caption is above the table
%\caption{caption}
%\label{tab:ap2}       % Give a unique label
% For LaTeX tables use
%\centering
\begin{tabular}{ll}
	%\hline\noalign{\smallskip}
	%first & second \\
	\noalign{\smallskip}\hline\noalign{\smallskip}
	Meaning& Symbol \\
	\noalign{\smallskip}\hline\noalign{\smallskip}
	deficiency& $\delta$  \\
	dimension of the stoichiometric subspace& $s$   \\
	incidence matrix& $I_a$\\
	kinetic deficiency& $\widetilde\delta$  \\
	kinetic reactant deficiency& $\widehat\delta$  \\
	molecularity matrix& $Y$\\
	stoichiometric matrix& $N$\\
	stoichiometric subspace& $S$\\
	\noalign{\smallskip}\hline
\end{tabular}

\end{document}